\documentclass[a4paper,12pt]{amsart}
\usepackage{graphicx}
\usepackage{amssymb}
\usepackage{amsmath}
\usepackage{latexsym}
\usepackage{amsthm}
\usepackage{autograph,epic,latexsym,bezier,amsbsy,psfrag,color,enumerate,amsfonts,amsmath,amscd,amssymb}
\usepackage[latin1]{inputenc}

\unitlength=0,3mm

\textwidth = 16.00cm \textheight = 22.00cm \oddsidemargin= 0.12in
\evensidemargin = 0.12in \setlength{\parindent}{8pt}
\setloopdiam{17}\setprofcurve{10}
\setlength{\parindent}{0cm}
\setlength{\parskip}{10pt}

\newtheorem{thm}{Theorem}[section]
\newtheorem{prop}{Proposition}[section]
\newtheorem{cor}{Corollary}[section]
\newtheorem{lemma}{Lemma}[section]
\newtheorem{defi}{Definition}[section]
\newtheorem{example}{Example}[section]
\newtheorem{remark}{Remark}[section]
\newcommand{\zig}{{\textcircled z}}

\newcommand{\rot}{\operatorname{Rot}}
\newcommand{\rr}{{\textcircled r}}

\begin{document}

\title[Connectedness and isomorphisms of zig-zag products of graphs]
{Connectedness and isomorphism properties of the zig-zag product of graphs}

\author{Daniele D'Angeli}
\address{Institut f\"{u}r mathematische Strukturtheorie (Math C)\\
Technische Universit\"{a}t Graz \ \ Steyrergasse 30, 8010 Graz, Austria}
\email{dangeli@math.tugraz.at}
\author{Alfredo Donno}
\address{Universit\`{a} degli Studi Niccol\`{o} Cusano - Via Don Carlo Gnocchi, 3 00166 Roma, Italia \\
Tel.: +39 06 45678356, Fax: +39 06 45678379}
\email{alfredo.donno@unicusano.it}
\author{Ecaterina Sava-Huss}
\address{Institut f\"{u}r mathematische Strukturtheorie (Math C)\\
Technische Universit\"{a}t Graz \ \ Steyrergasse 30, 8010 Graz, Austria}
\email{sava-huss@tugraz.at}

\keywords{Zig-zag product, replacement product, parity block,
parity block decomposition, connected component,
pseudo-replacement, double cycle graph.}

\date{\today, preprint}

\begin{abstract}
In this paper we investigate the connectedness and the isomorphism
problems for zig-zag products of two graphs. A sufficient
condition for the zig-zag product of two graphs to be connected is
provided, reducing to the study of the connectedness property
of a new graph which depends only on the second factor of the graph product. We
show that, when the second factor is a cycle graph, the study of
the isomorphism problem for the zig-zag product is equivalent to
the study of the same problem for the associated
pseudo-replacement graph. The latter is defined in a natural way, by
a construction generalizing the classical replacement product, and
its degree is smaller than the degree of the zig-zag product graph.

Two particular classes of products are studied in detail: the
zig-zag product of a complete graph with a cycle graph, and the
zig-zag product of a $4$-regular graph with the cycle graph of
length $4$. Furthermore, an example coming from the theory of Schreier graphs
associated with the action of self-similar groups is also considered:
the graph products are completely determined and their spectral
analysis is developed.
\end{abstract}

\maketitle

\begin{center}
{\footnotesize{\bf Mathematics Subject Classification (2010)}: 05C60, 05C76, 05C78.}
\end{center}

\section{Introduction}

\indent The fruitful idea of constructing new graphs starting from
smaller factor graphs is very popular in Mathematics and it has
been largely studied and developed in the literature for its
theoretical interest, as well as for its numerous applications in
several branches like Combinatorics, Probability, Theoretical
Computer Science, Statistical Mechanics.

This paper is devoted to the study of the connectedness and
isomorphism properties of zig-zag products of graphs. This
combinatorial construction, which applies to regular graphs, was introduced in \cite{annals} by O.
Reingold, S. Vadhan and A. Wigderson, in order to provide new
sequences of constant degree expanders of arbitrary size.
Informally, a graph is expander if it is simultaneously sparse,
i.e., it has relatively few edges, and highly connected. What is
mostly fascinating about expander graphs, is the fact that the
expansion property can be described from several points of view -
combinatorial, algebraic and probabilistic. Expander graphs
have many interesting applications in different areas of Computer
Science, such as design and analysis of communication networks and
error correcting codes, as well as in many computational problems, by
playing a crucial role also in Statistical Physics, Computational
Group Theory, and Optimization \cite{expander, lubotullio}.

The zig-zag product is strictly related to a simpler
construction, called replacement product of graphs. The replacement
and the zig-zag product play an important role in Geometric
Group Theory, since it turns out that, when applied to Cayley
graphs of two finite groups, they provide the Cayley graph of the
semidirect product of these groups \cite{groups}. Further results
about the relationship between graph products and group operations
are given in \cite{donnozigzag2}.

The structure of the paper is as follows. In Section \ref{sectionpreliminaries} we recall the definition and the basic
properties of the replacement and zig-zag product of graphs. In Section \ref{sectionconnectedness}, we attack the
connectedness problem for zig-zag products of regular graphs. In
Section \ref{sectionclassifications}, we focus our attention on
the classification of the isomorphism classes of zig-zag products
in the case where the second factor graph is a cycle graph of even length. In
this context, we prove that the connected components of the
zig-zag product are in one-to-one correspondence with the
so-called parity blocks, introduced in Subsection \ref{subsectionblocks}.
These are subgraphs of the first factor of the zig-zag product,
considered together with the bi-labelling of its edges. The
isomorphism problem is treated by associating with any parity
block (and so with any connected component of the zig-zag product)
a new simpler graph, that we call pseudo-replacement graph. The pseudo-replacement graph contains, in general, less
vertices and edges, and has a smaller degree than the
corresponding connected component of the zig-zag product. Nevertheless, it
completely encodes the isomorphism properties of each connected
component (see Subsection \ref{subsectioniso}). In the case where the cycle graph has length $4$, we show that
the structure of the zig-zag
product is very regular: it consists of highly
symmetric graphs that we call double cycle graphs (see Subsection
\ref{subsectionquattro}). In particular, this implies that the
zig-zag product of graphs is not injective, as two non isomorphic
graphs can produce isomorphic zig-zag products. In this setting,
we are also able to perform a complete spectral analysis, by using
the fact that the adjacency matrices of the double cycle graphs
are circulant.

Interesting sequences of increasing regular graphs can be obtained
by considering the Schreier graphs of the action of groups generated by finite automata.
In Section \ref{Basilicasection}, we describe an application of the zig-zag product to this setting. The class of automata groups became very
popular after the introduction of the Grigorchuk group, that was
the first example of a group with intermediate growth (see
\cite{grigorchuksolved} for the definition and further
references). Surprising deep connections between groups generated
by automata, complex dynamics, fractal geometry have been
discovered, and they constitute a very exciting topic of
investigation in modern mathematics \cite{nekrashevych,
nekrateplyaev}. In particular, sequences of finite Schreier graphs
represent a discrete approximation of fractal limit
objects associated with such groups. This point of view can also
be exploited in the study of models coming from Statistical Mechanics
\cite{DIMERI, ISING, tutte1, tutte2}.

The main results achieved in the current paper can be summarized as follows.
\begin{itemize}
\item A sufficient condition for the connectedness of the zig-zag product $G_1\zig G_2$ is given in terms of
the connectedness of a new graph $\mathcal{N}$, called the neighborhood graph of $G_2$. The construction of $\mathcal{N}$
depends only on the structure of $G_2$ and the number of vertices equals the number of vertices of $G_2$ (Theorem \ref{teoremaconnessione}).

\item There exists a one-to-one correspondence between the parity blocks in the parity block decomposition of
$G_1$, and the connected components of the graph $G_1\zig G_2$ (Theorem \ref{decomposition}).

\item There exists a one-to-one correspondence between the isomorphism classes of
the connected components of the zig-zag product and the isomorphism classes of the
corresponding pseudo-replacement graphs (Theorem \ref{iso}).

\item In the case $G_2\simeq C_4$, the connected
components of the zig-zag product are isomorphic to double cycle graphs
$DC_n$, for some $n$ (Proposition \ref{propprop}).

\item If $\{\Gamma_n\}_{n\geq 1}$ is the sequence of
Schreier graphs associated with the action of the Basilica group,
then, for each $n\geq 1$, the graph $\Gamma_n\zig C_4$ is
connected and isomorphic to the double cycle graph $DC_{2^{n+1}}$
(Proposition \ref{basilicaruote}); the spectral analysis of the graphs
$\Gamma_n\zig C_4$ is explicitly performed (Theorem
\ref{basilicaspettro}).
\end{itemize}

\section{Preliminaries}\label{sectionpreliminaries}

In this section we introduce the replacement and the
zig-zag product of two regular graphs. For this, we recall first
some basic definitions and properties of regular graphs, and we fix the notation
for the rest of the paper.

Let $G = (V,E)$ be a finite undirected graph, where $V$ and $E$ denote
the vertex set and the edge set of $G$, respectively. In other words, the
elements of the edge set $E$ are unordered pairs of type
$e=\{u,v\}$, with $u,v\in V$. If $e=\{u,v\}\in E$, we say that the
vertices $u$ and $v$ are adjacent in $G$, and we use the notation
$u\sim v$. We will also say that the edge $e$ joins $u$ and $v$.
Loops and multi-edges are also allowed. A \textit{path} in
$G$ is a sequence $\{u_0,u_1,\ldots, u_t\}$ of vertices of $V$ such that
$u_i\sim u_{i+1}$. The graph $G$ is \textit{connected} if, for every
$u,v\in V$, there exists a path $u_0,u_1,\ldots, u_t$ in $G$ such
that $u_0=u$ and $u_t = v$.
The \textit{degree} of a vertex $v\in V$ of $G$ is defined as $deg
(v) = |\{e\in E : v\in e\}|$. We assume that a loop at the vertex $v$ counts twice
in the degree of $v$. We say that
$G$ is a \textit{regular} graph of degree $d$, or a $d$-regular
graph, if $deg (v)=d$ for every $v\in V$.

Let $|V|=n$ and denote by $A_G = (a_{u,v})_{u,v\in V}$ the \textit{adjacency
matrix} of $G$, that is, the square matrix of size $n$ indexed by
$V$, whose entry $a_{u,v}$ equals the number of edges joining
$u$ and $v$. Note that $a_{u,u}=2k$ if there are $k$ loops at the vertex $u$. As the graph $G$ is undirected, $A_G$ is a symmetric
matrix, so that it admits $n$ real eigenvalues $\lambda_1\geq
\lambda_2\geq \ldots \geq \lambda_n$. One has $deg
(u) = \sum_{v\in V}a_{u,v}$: in particular, the $d$-regularity
condition can be rewritten as $\sum_{v\in V}a_{u,v}=d$, for each
$u\in V$. For a $d$-regular graph $G$, the {\it normalized
adjacency matrix} is defined as $A_G' = \frac{1}{d}A_G$. It is
known \cite{libro, valettebook} that, if $G=(V,E)$ is a
$d$-regular graph, with $|V|=n$, and $A_G$ is its adjacency
matrix, then $d$ is an eigenvalue of $A_G$. Its multiplicity as an
eigenvalue of $A_G$ equals the number of connected components of
$G$, and any other eigenvalue $\lambda_i$ satisfies the condition
$|\lambda_i|\leq d$, for each $i=1,\ldots, n$.

\subsection{Replacement product of graphs}\label{subsectionreplacement}

The replacement product of two graphs is a simple and intuitive
construction, which is well known in the literature, where it was
often used in order to reduce the vertex degree without losing the
connectivity property. It has been widely used in many areas
including Combinatorics, Probability, Group theory, in the study
of expander graphs and graph-based coding schemes
\cite{expander, communications}. It is worth
mentioning that Gromov studied the second eigenvalue of an
iterated replacement product of a $d$-dimensional cube with a
lower dimensional cube \cite{gromov}.

Let us introduce some notation. Let $G = (V,E)$ be a finite connected $d$-regular graph
(loops and multi-edges are allowed). Suppose that we have a set of $d$ colors (labels), that
we identify with the set of natural numbers $[d]:=\{1,2,\ldots,d\}$. We assume
that, for each vertex $v\in V$, the edges incident to $v$ are
labelled by a color $h\in [d]$ near $v$, and that any two distinct
edges issuing from $v$ have a different color near $v$. A
\textit{rotation map} $\textrm{Rot}_{G}:V\times [d]\longrightarrow
V\times [d]$ is defined by
$$
\textrm{Rot}_{G}(v,h) = (w,k), \qquad \forall v\in V, \ h\in [d],
$$
if there exists an edge joining $v$ and $w$ in $G$, which is
colored by the color $h$ near $v$ and by the color $k$ near $w$.
We may have $h\neq k$. Moreover, it follows from the
definition that the composition $\textrm{Rot}_{G}\circ
\textrm{Rot}_{G}$ is the identity map. Since an edge of $G$ joining the
vertices $u$ and $v$ is colored by some color $h$ near $u$ and by
some color $k$ near $v$, we will say that the graph $G$ is
\textit{bi-labelled}.

\begin{defi}\label{defireplacement}
Let $G_1 = (V_1,E_1)$ be a connected $d_1$-regular graph, and let $G_2 =
(V_2,E_2)$ be a connected $d_2$-regular graph, satisfying the condition
$|V_2| = d_1$. The {\it replacement product} $G_1\rr G_2$ is the
regular graph of degree $d_2+1$ with vertex set $V_1\times V_2$,
that we can identify with the set $V_1\times [d_1]$, and whose
edges are described by the following rotation map:
$$
\textrm{Rot}_{G_1\rr G_2}((v,k),i) =
\begin{cases}
((v,m),j)&\text{if}\ i\in [d_2]\ \text{and}\ \textrm{Rot}_{G_2}(k,i)=(m,j)\\
(\textrm{Rot}_{G_1}(v,k),i)&\text{if}\ i=d_2+1,
\end{cases}
$$
for all $v\in V_1, k\in [d_1], i\in [d_2+1]$.
\end{defi}
One can imagine that the vertex set of $G_1\rr G_2$
is partitioned into clouds, which are indexed by the vertices of
$G_1$, where by definition the $v$-cloud, for $v\in V_1$,
consists of vertices $(v,1), (v,2), \ldots, (v,d_1)$. Within this construction, the
idea is to put a copy of $G_2$ around each vertex $v$ of
$G_1$, while keeping edges of both $G_1$ and $G_2$.
Every vertex of $G_1\rr G_2$ will be connected to its original neighbors
within its cloud (by edges coming from $G_2$), but also to one
vertex of a different cloud, according to the rotation map of
$G_1$. Note that the degree of $G_1\rr G_2$ depends only on the
degree of the second factor graph $G_2$.
\begin{remark}     \rm
Notice that the definition of $G_1\rr G_2$ depends on the bi-labelling of $G_1$.
In general, there may exist two different bi-labellings of $G_1$, such that
the associated replacement products are non isomorphic graphs \cite[Example 2.3]{abdollahi}.
\end{remark}

\subsection{Zig-zag product of graphs.}\label{subsectionzigzag}

The zig-zag product of two graphs was introduced in \cite{annals}
as a construction which produces, starting from a large graph
$G_1$ and a small graph $G_2$, a new graph $G_1\zig G_2$. This new graph
inherits the size from the large graph $G_1$, the degree from the small
graph $G_2$, and the expansion property from both graphs.
The most important feature of the zig-zag product is that $G_1\zig
G_2$ is a good expander if both $G_1$ and $G_2$ are; see Reingold, Vadhan, Wigderson \cite[Theorem
3.2]{annals}. There it is explicitly described how iteration of the zig-zag construction,
together with the standard squaring, provides
an infinite family of constant-degree expander graphs, starting
from a particular graph representing the building block of this
construction.

\begin{defi}\label{defizigzag}
Let $G_1 = (V_1,E_1)$ be a connected $d_1$-regular graph, and let $G_2 =
(V_2,E_2)$ be a connected $d_2$-regular graph such that $|V_2| = d_1$ (as
usual, graphs are allowed to have loops or multi-edges). Let
$\textrm{Rot}_{G_1}$ (resp. $\textrm{Rot}_{G_2}$) be the
rotation map of $G_1$ (resp. $G_2$). The {\it zig-zag product}
$G_1\zig G_2$ is a regular graph of degree $d_2^2$ with vertex
set $V_1\times V_2$, that we identify with the set $V_1\times
[d_1]$, and whose edges are described by the rotation map
$$
\textrm{Rot}_{G_1\zig G_2}((v,k),(i,j)) = ((w,l),(j',i')),
$$
for all $v\in V_1, k\in [d_1], i,j \in [d_2]$, if:
\begin{enumerate}
\item $\textrm{Rot}_{G_2}(k,i) = (k',i')$,
\item $\textrm{Rot}_{G_1}(v,k') = (w,l')$,
\item $\textrm{Rot}_{G_2}(l',j) = (l,j')$,
\end{enumerate}
where $w\in V_1$, $l,k',l'\in [d_1]$ and $i',j'\in [d_2]$.
\end{defi}
Observe that labels in $G_1\zig G_2$ are elements of $[d_2]^2$. As
in the case of the replacement product, the vertex set of $G_1\zig
G_2$ is partitioned into clouds, indexed by the vertices of $G_1$.
By definition the $v$-cloud consists of vertices
$(v,1), (v,2), \ldots, (v,d_1)$, for every $v\in V_1$. Two
vertices $(v,k)$ and $(w,l)$ of $G_1\zig G_2$ are adjacent in
$G_1\zig G_2$ if it is possible to go from $(v,k)$ to $(w,l)$ by a
sequence of three steps of the following form:
\begin{enumerate}
\item a first step \lq\lq zig\rq\rq within the initial cloud, from the vertex $(v,k)$ to the vertex $(v,k')$,
described by $\textrm{Rot}_{G_2}(k,i) = (k',i')$;
\item a second step jumping from the $v$-cloud to the $w$-cloud, from the vertex $(v,k')$ to the vertex $(w,l')$, described by
$\textrm{Rot}_{G_1}(v,k') = (w,l')$;
\item a third step \lq\lq zag\rq\rq within the new cloud, from the vertex $(w,l')$ to the vertex $(w,l)$, described by $\textrm{Rot}_{G_2}(l',j) =
(l,j')$.
\end{enumerate}

\begin{center}
\begin{picture}(400,110)
\letvertex A=(125,90)\letvertex B=(90,70)\letvertex C=(90,30)
\letvertex D=(125,10)\letvertex E=(125,50)\letvertex F=(275,90)\letvertex G=(275,50)
\letvertex H=(275,10)\letvertex I=(310,30)\letvertex L=(310,70)

\drawvertex(A){\circle*{1}}\drawvertex(B){\circle*{1}}\drawvertex(C){\circle*{1}}\drawvertex(D){\circle*{1}}
\drawundirectededge(E,B){}\drawundirectededge(E,A){}\drawundirectededge(E,C){}\drawundirectededge(E,D){}
\drawvertex(F){\circle*{1}}\drawvertex(L){\circle*{1}}\drawvertex(H){\circle*{1}}\drawvertex(I){\circle*{1}}
\drawundirectededge(G,F){}\drawundirectededge(G,H){}\drawundirectededge(G,I){}\drawundirectededge(G,L){}

\put(-40,45){$(1)$ Zig-step in $G_2$}

\put(162,52){$i$}\put(232,52){$i'$}
\put(128,38){$k$}\put(262,38){$k'$}

\put(95,70){$1$}\put(127,88){$2$}\put(115,-5){$d_2\!\!-\!\!1$}\put(85,37){$d_2$}
\put(286,67){$d_2$}\put(255,93){$d_2\!\!-\!\!1$}\put(268,-5){$2$}\put(298,37){$1$}

\drawvertex(E){$\bullet$}\drawvertex(G){$\bullet$}

\put(153,21){\circle*{1}} \put(148,17){\circle*{1}}
\put(141,15){\circle*{1}}
\put(157,27){\circle*{1}}\put(159,33){\circle*{1}}

\put(153,79){\circle*{1}} \put(148,83){\circle*{1}}
\put(141,85){\circle*{1}}
\put(157,73){\circle*{1}}\put(159,67){\circle*{1}}

\put(247,79){\circle*{1}} \put(252,83){\circle*{1}}
\put(259,85){\circle*{1}}
\put(243,73){\circle*{1}}\put(241,67){\circle*{1}}

\put(247,21){\circle*{1}} \put(252,17){\circle*{1}}
\put(259,15){\circle*{1}}
\put(243,27){\circle*{1}}\put(241,33){\circle*{1}}


\drawundirectededge(E,G){}

\end{picture}
\end{center}

\begin{center}
\begin{picture}(400,110)
\letvertex A=(125,90)\letvertex B=(90,70)\letvertex C=(90,30)
\letvertex D=(125,10)\letvertex E=(125,50)\letvertex F=(275,90)\letvertex G=(275,50)
\letvertex H=(275,10)\letvertex I=(310,30)\letvertex L=(310,70)

\drawvertex(A){\circle*{1}}\drawvertex(B){\circle*{1}}\drawvertex(C){\circle*{1}}\drawvertex(D){\circle*{1}}
\drawundirectededge(E,B){}\drawundirectededge(E,A){}\drawundirectededge(E,C){}\drawundirectededge(E,D){}
\drawvertex(F){\circle*{1}}\drawvertex(L){\circle*{1}}\drawvertex(H){\circle*{1}}\drawvertex(I){\circle*{1}}
\drawundirectededge(G,F){}\drawundirectededge(G,H){}\drawundirectededge(G,I){}\drawundirectededge(G,L){}

\put(-40,45){$(2)$ Jump in $G_1$}

\put(162,52){$k'$}\put(232,52){$l'$}
\put(128,41){$v$}\put(264,41){$w$}

\put(95,70){$1$}\put(127,88){$2$}\put(115,-5){$d_1\!\!-\!\!1$}\put(85,37){$d_1$}
\put(286,67){$d_1$}\put(255,93){$d_1\!\!-\!\!1$}\put(268,-5){$2$}\put(298,37){$1$}

\drawvertex(E){$\bullet$}\drawvertex(G){$\bullet$}

\put(153,21){\circle*{1}} \put(148,17){\circle*{1}}
\put(141,15){\circle*{1}}
\put(157,27){\circle*{1}}\put(159,33){\circle*{1}}

\put(153,79){\circle*{1}} \put(148,83){\circle*{1}}
\put(141,85){\circle*{1}}
\put(157,73){\circle*{1}}\put(159,67){\circle*{1}}

\put(247,79){\circle*{1}} \put(252,83){\circle*{1}}
\put(259,85){\circle*{1}}
\put(243,73){\circle*{1}}\put(241,67){\circle*{1}}

\put(247,21){\circle*{1}} \put(252,17){\circle*{1}}
\put(259,15){\circle*{1}}
\put(243,27){\circle*{1}}\put(241,33){\circle*{1}}


\drawundirectededge(E,G){}
\end{picture}
\end{center}

\begin{center}
\begin{picture}(400,110)
\letvertex A=(125,90)\letvertex B=(90,70)\letvertex C=(90,30)
\letvertex D=(125,10)\letvertex E=(125,50)\letvertex F=(275,90)\letvertex G=(275,50)
\letvertex H=(275,10)\letvertex I=(310,30)\letvertex L=(310,70)

\drawvertex(A){\circle*{1}}\drawvertex(B){\circle*{1}}\drawvertex(C){\circle*{1}}\drawvertex(D){\circle*{1}}
\drawundirectededge(E,B){}\drawundirectededge(E,A){}\drawundirectededge(E,C){}\drawundirectededge(E,D){}
\drawvertex(F){\circle*{1}}\drawvertex(L){\circle*{1}}\drawvertex(H){\circle*{1}}\drawvertex(I){\circle*{1}}
\drawundirectededge(G,F){}\drawundirectededge(G,H){}\drawundirectededge(G,I){}\drawundirectededge(G,L){}

\put(-40,45){$(3)$ Zag-step in $G_2$}

\put(162,54){$j$}\put(230,54){$j'$}
\put(128,38){$l'$}\put(264,38){$l$}

\put(95,70){$1$}\put(127,88){$2$}\put(115,-5){$d_2\!\!-\!\!1$}\put(85,37){$d_2$}
\put(286,67){$d_2$}\put(255,93){$d_2\!\!-\!\!1$}\put(268,-5){$2$}\put(298,37){$1$}

\drawvertex(E){$\bullet$}\drawvertex(G){$\bullet$}

\put(153,21){\circle*{1}} \put(148,17){\circle*{1}}
\put(141,15){\circle*{1}}
\put(157,27){\circle*{1}}\put(159,33){\circle*{1}}

\put(153,79){\circle*{1}} \put(148,83){\circle*{1}}
\put(141,85){\circle*{1}}
\put(157,73){\circle*{1}}\put(159,67){\circle*{1}}

\put(247,79){\circle*{1}} \put(252,83){\circle*{1}}
\put(259,85){\circle*{1}}
\put(243,73){\circle*{1}}\put(241,67){\circle*{1}}

\put(247,21){\circle*{1}} \put(252,17){\circle*{1}}
\put(259,15){\circle*{1}}
\put(243,27){\circle*{1}}\put(241,33){\circle*{1}}


\drawundirectededge(E,G){}

\end{picture}
\end{center}

\begin{center}
\begin{picture}(400,110)
\letvertex A=(125,90)\letvertex B=(90,70)\letvertex C=(90,30)
\letvertex D=(125,10)\letvertex E=(125,50)\letvertex F=(275,90)\letvertex G=(275,50)
\letvertex H=(275,10)\letvertex I=(310,30)\letvertex L=(310,70)

\put(-20,45){$\Rightarrow$ in $G_1\zig G_2$}

\put(162,54){$(i,j)$}\put(208,54){$(j',i')$}
\put(128,37){$(v,k)$}\put(242,37){$(w,l)$}

\drawvertex(A){\circle*{1}}\drawvertex(B){\circle*{1}}\drawvertex(C){\circle*{1}}\drawvertex(D){\circle*{1}}
\drawundirectededge(E,B){}\drawundirectededge(E,A){}\drawundirectededge(E,C){}\drawundirectededge(E,D){}
\drawvertex(F){\circle*{1}}\drawvertex(L){\circle*{1}}\drawvertex(H){\circle*{1}}\drawvertex(I){\circle*{1}}
\drawundirectededge(G,F){}\drawundirectededge(G,H){}\drawundirectededge(G,I){}\drawundirectededge(G,L){}

\drawvertex(E){$\bullet$}\drawvertex(G){$\bullet$}

\put(148,18){\circle*{1}} \put(143,14){\circle*{1}}
\put(136,12){\circle*{1}}
\put(152,24){\circle*{1}}\put(154,30){\circle*{1}}

\put(148,82){\circle*{1}} \put(143,86){\circle*{1}}
\put(136,88){\circle*{1}}
\put(152,76){\circle*{1}}\put(154,70){\circle*{1}}

\put(252,82){\circle*{1}} \put(257,86){\circle*{1}}
\put(264,88){\circle*{1}}
\put(248,76){\circle*{1}}\put(246,70){\circle*{1}}

\put(252,18){\circle*{1}} \put(257,14){\circle*{1}}
\put(264,12){\circle*{1}}
\put(248,24){\circle*{1}}\put(246,30){\circle*{1}}

\drawundirectededge(E,G){}
\end{picture}
\end{center}

From the definition of the replacement and the zig-zag
product it follows that the edges of $G_1\zig G_2$ arise from paths of length
$3$ in $G_1\rr G_2$ of type:
\begin{enumerate}
\item a first step within one cloud (the zig-step);
\item a second step which is a jump to a new cloud;
\item a third step within the new cloud (the zag-step).
\end{enumerate}
In other words, $G_1\zig G_2$ is a regular subgraph of the graph
obtained by taking the third power of $G_1\rr G_2$. This fact can
be explicitly expressed in terms of normalized adjacency matrices. More precisely, let $A_1'$ (resp. $A_2'$) be the
normalized adjacency matrix of the graph $G_1$ (resp. $G_2$), and suppose that $|V_1|=n_1$. Then
the normalized adjacency matrix of $G_1 \zig G_2$ is
$M_{\zig}=\widetilde{A}_2\widetilde{A}_1\widetilde{A}_2$ (see
\cite{annals}), with $\widetilde{A}_2 = I_{n_1}\otimes A_2'$,
where the symbol $\otimes$ denotes the tensor product, or
Kronecker product, and $\widetilde{A}_1$ is the permutation matrix
on $V_1\times [d_1]$ associated with the map $\textrm{Rot}_{G_1}$,
i.e,
$$
\widetilde{A}_{1 \!\ {(v,k),(w,l)}} =
\begin{cases}
1&\text{if}\ \rot_{G_1}(v,k) = (w,l)\\
0&\text{otherwise}.
\end{cases}
$$
The matrix $\widetilde{A}_1$ has exactly one entry $1$ in each row
and each column and $0$'s elsewhere. Note that $\widetilde{A}_1$
and $\widetilde{A}_2$ are both symmetric matrices, due to the
undirectedness of $G_1$ and $G_2$. On the other hand, it is easy to check that the normalized
adjacency matrix of $G_1 \rr G_2$ is
$M_{\rr}=\frac{\widetilde{A}_1+d_2\widetilde{A}_2}{d_2+1}$, and
that the following decomposition holds:
$$
M_{\rr}^3 = \frac{d_2^2}{(d_2+1)^3}M_{\zig} +
\left(1-\frac{d_2^2}{(d_2+1)^3}\right)C,
$$
where $C$ is the normalized adjacency matrix of a regular graph.

\begin{remark}\rm
Note that the replacement product $G_1\rr G_2$ and the zig-zag product $G_1\zig G_2$ are defined for finite connected regular graphs $G_1$ and $G_2$.
It follows from the definition of replacement product that the graph $G_1\rr G_2$ is connected; on the other hand, the connectedness of
$G_1$ and $G_2$ does not ensure the connectedness of the graph $G_1\zig G_2$. One of the goals of the current work is to investigate this property for the zig-zag construction.
\end{remark}

Recall that a \textit{complete bipartite graph} $G=(V,E)$ is a
graph whose vertex set can be partitioned into two subsets $U_1$
and $U_2$ such that, for every two vertices $u_1\in U_1$, $u_2\in
U_2$, one has $\{u_1,u_2\}\in E$, but there is no edge joining two vertices belonging to the same subset $U_i$. A complete bipartite graph is
usually denoted by $K_{m,n}$, if $|U_1|=m$ and $|U_2|=n$.

The following basic result will be very useful for the rest of the
paper. It shows that the graph $G_1\zig G_2$ consists of unions of special \lq\lq elementary blocks\rq\rq, each isomorphic
to a complete bipartite graph.
\begin{lemma}\label{papillonlemma}
Let $G_1$ be a $d_1$-regular graph, and let $G_2$ be a $d_2$-regular
graph on $d_1$ vertices. Suppose that the vertices $v$ and $w$ are
adjacent in $G_1$, with $\rot_{G_1}(v,k)= (w,l)$, and $k,l\in
[d_1]$. Let $\{k_1, \ldots, k_{d_2}\}$ be the set of vertices adjacent
to $k$ in $G_2$; similarly, let $\{l_1, \ldots, l_{d_2}\}$ be the set
of vertices adjacent to $l$ in $G_2$. Then the edge connecting $v$
and $w$ in $G_1$ produces in $G_1\zig G_2$ the following subgraph
isomorphic to $K_{d_2,d_2}$:
\begin{center}
\begin{picture}(300,110)\unitlength=0.18mm
\letvertex A=(25,160)\letvertex B=(125,160)\letvertex C=(475,160)
\letvertex a=(25,20)\letvertex b=(125,20)\letvertex c=(475,20)

\drawvertex(A){$\bullet$}\drawvertex(B){$\bullet$}
\drawvertex(C){$\bullet$}\drawvertex(a){$\bullet$}\drawvertex(b){$\bullet$}
\drawvertex(c){$\bullet$}

\drawundirectededge(A,a){}\drawundirectededge(A,b){}\drawundirectededge(A,c){}
\drawundirectededge(B,a){}\drawundirectededge(B,b){}\drawundirectededge(B,c){}
\drawundirectededge(C,a){}\drawundirectededge(C,b){}\drawundirectededge(C,c){}

\put(0,173){$(v,k_1)$}
\put(100,173){$(v,k_2)$}\put(450,173){$(v,k_{d_2})$}
\put(0,-3){$(w,l_1)$}
\put(100,-3){$(w,l_2)$}\put(450,-3){$(w,l_{d_2})$}

\put(195,160){$\cdots$}\put(245,160){$\cdots$}\put(295,160){$\cdots$}\put(345,160){$\cdots$}\put(395,160){$\cdots$}
\put(195,20){$\cdots$}\put(245,20){$\cdots$}\put(295,20){$\cdots$}\put(345,20){$\cdots$}\put(395,20){$\cdots$}
\end{picture}
\end{center}
\end{lemma}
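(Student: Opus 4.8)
The plan is to unwind the definition of the rotation map of $G_1\zig G_2$ and to isolate exactly those edges whose middle (\lq\lq jump\rq\rq) step traverses the fixed edge of $G_1$ determined by $\rot_{G_1}(v,k)=(w,l)$. The key observation is that, by the three conditions in Definition \ref{defizigzag}, an edge of $G_1\zig G_2$ uses this particular $G_1$-edge in its second step if and only if its zig-step ends at the position $k$ inside the $v$-cloud and its zag-step starts at the position $l$ inside the $w$-cloud. Consequently the source vertex of such an edge must be of the form $(v,k_a)$, where $k_a$ is a $G_2$-neighbour of $k$ (so that the zig-step can reach $k$), and its target must be of the form $(w,l_b)$, where $l_b$ is a $G_2$-neighbour of $l$. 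This already forces all such edges to run between the top row $(v,k_1),\ldots,(v,k_{d_2})$ and the bottom row $(w,l_1),\ldots,(w,l_{d_2})$.

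First I would verify that every pair $\big((v,k_a),(w,l_b)\big)$ is joined. Given $k_a\sim k$ and $l_b\sim l$ in $G_2$, let $i\in[d_2]$ be the colour near $k_a$ of the $G_2$-edge $\{k_a,k\}$ and let $j\in[d_2]$ be the colour near $l$ of the $G_2$-edge $\{l,l_b\}$, so that $\rot_{G_2}(k_a,i)=(k,i')$ and $\rot_{G_2}(l,j)=(l_b,j')$ for suitable $i',j'\in[d_2]$. Feeding the label $(i,j)$ into the rotation map at $(v,k_a)$, the zig-step lands at position $k$, the jump-step $\rot_{G_1}(v,k)=(w,l)$ traverses the fixed $G_1$-edge, and the zag-step lands at position $l_b$; hence $\rot_{G_1\zig G_2}\big((v,k_a),(i,j)\big)=\big((w,l_b),(j',i')\big)$, so $(v,k_a)$ is adjacent to $(w,l_b)$ in $G_1\zig G_2$. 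Using that $\rot_{G_1}$ and $\rot_{G_2}$ are involutions, I would then check that applying the rotation map to $\big((w,l_b),(j',i')\big)$ returns $\big((v,k_a),(i,j)\big)$, confirming that this is a genuine undirected edge and not an artifact of orientation.

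Next I would argue uniqueness, in order to rule out multiplicities and obtain exactly $K_{d_2,d_2}$. Since the $G_1$-edge used in the jump forces the intermediate positions $k$ and $l$, the zig-step must be the $G_2$-edge from $k_a$ to $k$ and the zag-step the $G_2$-edge from $l$ to $l_b$; when $G_2$ is simple these edges, hence the label $(i,j)$, are uniquely determined by the pair $(k_a,l_b)$, so each top-bottom pair is joined by precisely one edge. Finally, noting that the $d_2$ source vertices lie in the $v$-cloud and the $d_2$ target vertices in the $w$-cloud (hence the two rows are disjoint, giving $d_2+d_2$ distinct vertices when $v\neq w$), and that every vertex of one row is adjacent to every vertex of the other exactly once, yields the desired subgraph isomorphic to $K_{d_2,d_2}$.

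The computation itself is a direct unwinding of Definition \ref{defizigzag}; the part requiring the most care is the bookkeeping of the four auxiliary colours $i,i',j,j'$, so as to certify that the label-to-edge assignment is a bijection (producing a simple complete bipartite graph rather than one with multiplicities), together with the degenerate situations where $G_2$ carries loops or multi-edges at $k$ or $l$ (so that the $k_a$, resp. the $l_b$, need not all be distinct) and where $G_1$ carries a loop at $v$ (so that $v=w$). These I would treat separately, since they fall outside the clean $K_{d_2,d_2}$ picture.
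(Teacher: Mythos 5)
Your proposal is correct and takes essentially the same route as the paper, whose proof consists precisely of observing that the hypotheses yield the edge $\{v,w\}$ in $G_1$ together with the stars around $k$ and $l$ in $G_2$, and then applying Definition \ref{defizigzag} directly. Your version merely makes explicit the bookkeeping that the paper's one-line argument leaves implicit: the choice of the label $(i,j)$ for each pair $\big((v,k_a),(w,l_b)\big)$, the involution check confirming undirectedness, and the degenerate situations (loops or multi-edges at $k$, $l$, or $v$) which the paper also passes over silently.
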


\begin{proof}
The hypothesis ensures that the graphs $G_1$ and $G_2$ contain the
subgraphs depicted below.
\begin{center}
\begin{picture}(450,90)
\letvertex A=(50,50)\letvertex B=(100,50)

\letvertex C=(175,93)
\letvertex D=(150,50)\letvertex E=(175,7)\letvertex F=(225,7)\letvertex G=(250,50)
\letvertex H=(225,93)\letvertex I=(200,50)

\letvertex c=(325,93)
\letvertex d=(300,50)\letvertex e=(325,7)\letvertex f=(375,7)\letvertex g=(400,50)
\letvertex h=(375,93)\letvertex i=(350,50)

\drawvertex(A){$\bullet$}\drawvertex(B){$\bullet$}
\drawvertex(I){$\bullet$}\drawvertex(i){$\bullet$}

\drawundirectededge(A,B){}

\drawundirectededge(I,C){}\drawundirectededge(I,D){}\drawundirectededge(I,E){}
\drawundirectededge(I,F){}\drawundirectededge(I,G){}\drawundirectededge(I,H){}

\drawundirectededge(i,c){}\drawundirectededge(i,d){}\drawundirectededge(i,e){}
\drawundirectededge(i,f){}\drawundirectededge(i,g){}\drawundirectededge(i,h){}

\put(45,38){$v$}\put(95,38){$w$}\put(60,54){$k$}\put(85,54){$l$}

\put(215,37){$k$}\put(170,95){$k_1$}\put(145,40){$k_2$}\put(222,95){$k_{d_2}$}
\put(365,37){$l$}\put(320,95){$l_1$}\put(295,40){$l_2$}\put(372,95){$l_{d_2}$}

\put(62,20){in $G_1$}\put(255,20){in $G_2$}
\end{picture}
\end{center}
Now it suffices to apply the definition of the zig-zag product in order to get the assertion.
\end{proof}
\begin{remark}\rm
In the case where $G_2$ is a $2$-regular graph, for instance, if
$G_2$ is a cycle graph, we call the graph $K_{2,2}$ the
\textit{papillon graph}. We will say that the vertices $(v,k_1), (v,k_2), (w,l_1), (w,l_2)$ form the papillon graph depicted below.
\begin{center}
\begin{picture}(360,115)
\letvertex A=(140,100)\letvertex B=(140,20)\letvertex C=(220,100)
\letvertex D=(220,20)

\put(125,110){$(v,k_1)$} \put(125,2){$(w,l_1)$}
\put(205,110){$(v,k_2)$} \put(205,2){$(w,l_2)$}

\drawvertex(A){$\bullet$}\drawvertex(B){$\bullet$}
\drawvertex(C){$\bullet$}\drawvertex(D){$\bullet$}

\drawundirectededge(A,B){}\drawundirectededge(A,D){}\drawundirectededge(C,B){}\drawundirectededge(C,D){}
\end{picture}
\end{center}
\end{remark}

\begin{example}\label{cubecyclicreplacement}\rm
Let $G_1=(V_1,E_1)$ be the $3$-dimensional Hamming cube, so that
$V_1=\{0,1\}^3$ is the set of binary words of length $3$, and two
words $u=x_0x_1x_2$ and $v=y_0y_1y_2$ are adjacent if and only if
$x_i=y_i$ for all but one index $i\in \{0,1,2\}$. Observe that
$G_1$ can be interpreted as the Cayley graph of the group
$\mathbb{Z}_2^3$ with respect to the generating set ${\bf E}_3 =
\{{\bf e}_0,{\bf e}_1, {\bf e}_2\}$, where ${\bf e}_i$ denotes the
triple with $1$ at the $i$-th coordinate and $0$
elsewhere. Now let $G_2=(V_2,E_2)$ be the cycle graph of length $3$, which can be
interpreted as the Cayley graph of the cyclic group
$\mathbb{Z}_3=\{0,1,2\}$, with respect to the generating set
$\{\pm 1\}$ (see Figure \ref{figure7}).

\begin{figure}[h]
\begin{picture}(400,135)\unitlength=0,3mm
\letvertex A=(40,90)\letvertex B=(40,10)\letvertex C=(120,10)\letvertex D=(160,50)\letvertex E=(160,130)\letvertex F=(80,130)\letvertex G=(80,50)
\letvertex H=(120,90)

\letvertex I=(260,20)\letvertex L=(340,20)\letvertex M=(300,90)

\put(70,135){$001$}\put(20,95){$000$}\put(20,-5){$010$}\put(110,-5){$110$}\put(165,45){$111$}\put(155,135){$101$}
\put(105,95){$100$}\put(77,37){$011$}\put(255,7){$1$}\put(336,7){$2$}\put(296,95){$0$}

\drawvertex(A){$\bullet$}\drawvertex(B){$\bullet$}\drawvertex(C){$\bullet$}\drawvertex(D){$\bullet$}
\drawvertex(E){$\bullet$}\drawvertex(F){$\bullet$}\drawvertex(G){$\bullet$}\drawvertex(I){$\bullet$}\drawvertex(M){$\bullet$}
\drawvertex(H){$\bullet$}\drawvertex(L){$\bullet$}

\dashline[0]{4}(80,130)(80,50)\dashline[0]{4}(40,10)(80,50)
\dashline[0]{4}(160,50)(80,50)

\drawundirectededge(A,B){}\drawundirectededge(A,H){}\drawundirectededge(A,F){}
\drawundirectededge(B,C){}\drawundirectededge(C,H){}\drawundirectededge(C,D){}\drawundirectededge(D,E){}
\drawundirectededge(E,H){}\drawundirectededge(E,F){}
\drawundirectededge(I,L){}\drawundirectededge(L,M){}
\drawundirectededge(M,I){}
\end{picture}\caption{The graphs $Cay(\mathbb{Z}_2^3,{\bf E}_3)$ and $Cay(\mathbb{Z}_3,\{\pm
1\})$.}\label{figure7}
\end{figure}
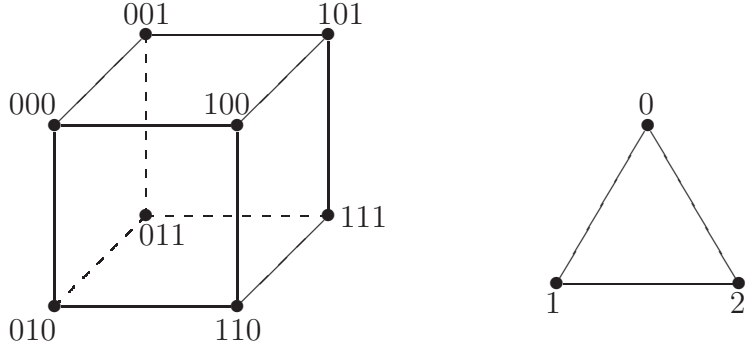

Having these interpretations in our mind, we label the edges of
$G_1$ as follows: the edge connecting two vertices $u$ and $v$ is
labelled by $i$ both near $u$ and $v$, if the corresponding words
$u=x_0x_1x_2$ and $v=y_0y_1y_2$ differ in the $i$-th letter (this
corresponds to moving by using the generator ${\bf e}_i$ in the
Cayley graph of $\mathbb{Z}_2^3$). Similarly, we label the edges of $G_2$ in such a way that
$\rot_{G_2}(u,\pm 1) = (u\pm 1,\mp 1)$, where the integers $u,u\pm
1,\pm 1, \mp 1 $ are taken modulo $3$ (this corresponds to moving
by using the generators $\pm 1$ in the Cayley graph of
$\mathbb{Z}_3$). In the replacement product, every vertex of $\mathbb{Z}_2^3$ is
replaced by a cloud of $3$ vertices representing a copy of
$\mathbb{Z}_3$. Moreover, each vertex of any cloud is connected to
exactly one vertex of a neighboring cloud according with the
following rule: the vertex $(v, i)$ is connected to the vertex $(v
+ {\bf e}_i, i)$.
The replacement product is depicted in Figure
\ref{figure8}. It is worth mentioning that the replacement product
$Cay(\mathbb{Z}_2^3,{\bf E}_3)\rr Cay(\mathbb{Z}_3,\{\pm 1\})$ is
isomorphic to the Cayley graph of the semidirect product
$\mathbb{Z}_2^3\rtimes \mathbb{Z}_3$, with respect to the
generating set $\{(000,\pm 1),({\bf e}_0,0)\}$ (see
\cite{alfredoIJGT}).

\begin{center}
\begin{figure}[h]
\begin{picture}(250,260)\unitlength=0,18mm
\letvertex A=(10,340)\letvertex B=(10,60)\letvertex C=(60,10)\letvertex D=(340,10)\letvertex E=(390,60)\letvertex F=(390,340)\letvertex G=(340,390)
\letvertex H=(60,390)\letvertex I=(60,340)\letvertex L=(60,60)\letvertex M=(340,60)\letvertex N=(340,340)\letvertex O=(120,280)
\letvertex P=(120,230)\letvertex Q=(120,170)\letvertex R=(120,120)\letvertex S=(170,120)\letvertex T=(230,120)\letvertex U=(280,120)
\letvertex V=(280,170)\letvertex Z=(280,230)\letvertex X=(280,280)\letvertex Y=(230,280)\letvertex W=(170,280)

\put(45,398){$000,0$}\put(-50,335){$000,1$}\put(68,335){$000,2$}\put(40,-10){$010,0$}\put(-50,55){$010,1$}\put(70,55){$010,2$}
\put(325,398){$100,0$}\put(393,335){$100,1$}\put(278,335){$100,2$}\put(320,-10){$110,0$}\put(393,55){$110,1$}\put(273,55){$110,2$}
\put(58,223){$001,1$}\put(140,287){$001,0$}\put(58,272){$001,2$}\put(58,116){$011,2$}\put(140,100){$011,0$}\put(58,165){$011,1$}
\put(285,116){$111,2$}\put(215,100){$111,0$}\put(285,165){$111,1$}\put(285,271){$101,2$}\put(215,287){$101,0$}\put(285,225){$101,1$}

\drawvertex(A){$\bullet$}\drawvertex(B){$\bullet$}\drawvertex(C){$\bullet$}\drawvertex(D){$\bullet$}
\drawvertex(E){$\bullet$}\drawvertex(F){$\bullet$}\drawvertex(G){$\bullet$}\drawvertex(I){$\bullet$}\drawvertex(M){$\bullet$}\drawvertex(N){$\bullet$}
\drawvertex(H){$\bullet$}\drawvertex(L){$\bullet$}\drawvertex(O){$\bullet$}\drawvertex(T){$\bullet$}
\drawvertex(P){$\bullet$}\drawvertex(U){$\bullet$}\drawvertex(Q){$\bullet$}\drawvertex(V){$\bullet$}
\drawvertex(R){$\bullet$}\drawvertex(Z){$\bullet$}\drawvertex(Y){$\bullet$}\drawvertex(Z){$\bullet$}
\drawvertex(S){$\bullet$}\drawvertex(X){$\bullet$}\drawvertex(W){$\bullet$}

\drawundirectededge(A,B){}\drawundirectededge(A,H){}\drawundirectededge(A,I){}\drawundirectededge(B,L){}
\drawundirectededge(B,C){}\drawundirectededge(C,L){}\drawundirectededge(C,D){}\drawundirectededge(D,E){}
\drawundirectededge(D,M){}\drawundirectededge(E,M){}\drawundirectededge(E,F){}
\drawundirectededge(F,G){}\drawundirectededge(F,N){}\drawundirectededge(G,N){}\drawundirectededge(G,H){}
\drawundirectededge(H,I){}\drawundirectededge(I,O){}\drawundirectededge(O,P){}\drawundirectededge(O,W){}\drawundirectededge(P,Q){}
\drawundirectededge(Q,R){}\drawundirectededge(Q,S){}\drawundirectededge(R,L){}
\drawundirectededge(R,S){}\drawundirectededge(S,T){}\drawundirectededge(T,U){}\drawundirectededge(T,V){}
\drawundirectededge(U,M){}\drawundirectededge(U,V){}\drawundirectededge(V,Z){}\drawundirectededge(Z,Y){}\drawundirectededge(Z,X){}
\drawundirectededge(N,X){}\drawundirectededge(X,Y){}\drawundirectededge(Y,W){}\drawundirectededge(P,W){}
\end{picture}\caption{The graph $Cay(\mathbb{Z}_2^3,{\bf E}_3)\rr Cay(\mathbb{Z}_3,\{\pm
1\})$.}\label{figure8}
\end{figure}\end{center}

The zig-zag product $G_1\zig G_2$ is represented in Figure
\ref{figure9}. It is known that it can be regarded as the Cayley
graph of the group $\mathbb{Z}_2^3\rtimes \mathbb{Z}_3$, with
respect to the generating set $\{({\bf e}_1,2),({\bf e}_1,0),({\bf
e}_2,0),({\bf e}_2,1)\}$ (see \cite{alfredoIJGT}). Observe that
one edge in this graph is obtained as a sequence of three steps in
the graph $G_1\rr G_2$ in Figure \ref{figure8}: the first step
within some initial cloud, the second step jumping to a new cloud
and finally a third step within the new cloud. For instance, the
three steps $(111,1)\to (111,2)\to(110,2)\to (110,0)$ produce the
edge connecting the vertices $(111,1)$ and $(110,0)$ in $G_1\zig
G_2$. The papillon subgraphs structure of $G_1\zig G_2$ is
well-rendered in Figure \ref{figure9}.

\begin{figure}[h]
\begin{picture}(300,340)\unitlength=0,22mm
\letvertex A=(10,340)\letvertex B=(10,60)\letvertex C=(60,10)\letvertex D=(340,10)\letvertex E=(390,60)\letvertex F=(390,340)\letvertex G=(340,390)
\letvertex H=(60,390)\letvertex I=(60,340)\letvertex L=(60,60)\letvertex M=(340,60)\letvertex N=(340,340)\letvertex O=(170,280)
\letvertex P=(120,230)\letvertex Q=(120,170)\letvertex R=(170,120)\letvertex S=(170,170)\letvertex T=(230,170)\letvertex U=(230,120)
\letvertex V=(280,170)\letvertex Z=(280,230)\letvertex X=(230,280)\letvertex Y=(230,230)\letvertex W=(170,230)

\put(35,397){$000,1$}\put(-32,348){$000,0$}\put(24,348){$000,2$}
\put(35,-9){$010,1$}\put(-29,42){$010,0$}\put(24,43){$010,2$}
\put(325,397){$100,1$}\put(387,348){$100,0$}\put(330,348){$100,2$}
\put(330,-9){$110,1$}\put(388,42){$110,0$}\put(332,43){$110,2$}
\put(67,225){$001,0$}\put(127,263){$001,1$}\put(130,235){$001,2$}
\put(129,153){$011,2$}\put(128,125){$011,1$}\put(67,165){$011,0$}
\put(226,154){$111,2$}\put(227,125){$111,1$}\put(285,165){$111,0$}
\put(226,235){$101,2$}\put(231,264){$101,1$}\put(285,224){$101,0$}

\drawvertex(A){$\bullet$}\drawvertex(B){$\bullet$}\drawvertex(C){$\bullet$}\drawvertex(D){$\bullet$}
\drawvertex(E){$\bullet$}\drawvertex(F){$\bullet$}\drawvertex(G){$\bullet$}\drawvertex(I){$\bullet$}\drawvertex(M){$\bullet$}\drawvertex(N){$\bullet$}
\drawvertex(H){$\bullet$}\drawvertex(L){$\bullet$}\drawvertex(O){$\bullet$}\drawvertex(T){$\bullet$}
\drawvertex(P){$\bullet$}\drawvertex(U){$\bullet$}\drawvertex(Q){$\bullet$}\drawvertex(V){$\bullet$}
\drawvertex(R){$\bullet$}\drawvertex(Z){$\bullet$}\drawvertex(Y){$\bullet$}\drawvertex(Z){$\bullet$}
\drawvertex(S){$\bullet$}\drawvertex(X){$\bullet$}\drawvertex(W){$\bullet$}

\drawundirectededge(A,B){}\drawundirectededge(A,P){}\drawundirectededge(A,L){}\drawundirectededge(A,O){}
\drawundirectededge(B,I){}\drawundirectededge(B,R){}\drawundirectededge(B,Q){}\drawundirectededge(C,Q){}
\drawundirectededge(C,R){}\drawundirectededge(C,M){}\drawundirectededge(C,D){}
\drawundirectededge(D,U){}\drawundirectededge(D,V){}\drawundirectededge(D,L){}\drawundirectededge(E,N){}
\drawundirectededge(E,F){}\drawundirectededge(E,U){}\drawundirectededge(E,V){}\drawundirectededge(F,M){}\drawundirectededge(F,X){}
\drawundirectededge(F,Z){}\drawundirectededge(G,H){}\drawundirectededge(G,I){}
\drawundirectededge(G,X){}\drawundirectededge(G,Z){}\drawundirectededge(H,O){}\drawundirectededge(H,P){}
\drawundirectededge(H,N){}\drawundirectededge(I,L){}\drawundirectededge(I,N){}\drawundirectededge(L,M){}\drawundirectededge(M,N){}
\drawundirectededge(O,X){}\drawundirectededge(O,Y){}\drawundirectededge(P,Q){}\drawundirectededge(P,S){}

\drawundirectededge(Q,W){}\drawundirectededge(R,T){}\drawundirectededge(R,U){}\drawundirectededge(S,W){}
\drawundirectededge(S,T){}\drawundirectededge(S,U){}\drawundirectededge(T,Y){}\drawundirectededge(T,Z){}\drawundirectededge(V,Y){}
\drawundirectededge(V,Z){}\drawundirectededge(X,W){}\drawundirectededge(Y,W){}
\end{picture}\caption{The graph $Cay(\mathbb{Z}_2^3,{\bf E}_3)\zig Cay(\mathbb{Z}_3,\{\pm
1\})$.}\label{figure9}
\end{figure}
\end{example}

\section{Connectedness}\label{sectionconnectedness}
In this section we discuss the connectedness problem of the
zig-zag product of graphs. We will see (Example \ref{example6}), that in general
such graph product is not connected. The upcoming result relates the investigation of the
connectedness properties of the graph $G_1\zig G_2$ to the study of a new graph that one constructs
starting from $G_2$, independently of $G_1$.

Like before, let $G_1=(V_1,E_1)$ be a $d$-regular  connected graph and
$G_2=(V_2,E_2)$, with $d=|V_2|$. For any $h\in V_2$, we put
$N_h=\{h'\in V_2 : h\sim h'\}$.
Now we associate with $G_2$ a new graph
$\mathcal{N}=(\mathcal{V},\mathcal{E})$, called the \textit{neighborhood graph} of $G_2$.

The neighborhood graph $\mathcal{N}=(\mathcal{V},\mathcal{E})$ of $G_2$ is defined by:
\begin{itemize}
\item $\mathcal{V}= \{N_h, h\in V_2\}$;
\item $\mathcal{E} = \big\{ \{N_h,N_k\} : N_h \cap N_k\neq \emptyset, h\neq k\big\}$.
\end{itemize}
In other words, two vertices $N_h, N_k$ of $\mathcal{N}$ are adjacent if $h$ and $k$ have at least a
common neighbor in $G_2$. We do not put any label on the edges of $\mathcal{N}$.

\begin{thm}\label{teoremaconnessione}
Let $G_1=(V_1,E_1)$ be a $d$-regular graph and let
$G_2=(V_2,E_2)$, with $d=|V_2|$. If the neighborhood graph $\mathcal{N}$ of $G_2$ is connected,
then $G_1\zig G_2$ is connected as well.
\end{thm}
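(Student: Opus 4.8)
The plan is to prove connectedness in two stages: first show that every cloud $\{v\}\times V_2$ lies entirely inside a single connected component of $G_1\zig G_2$, and then use the connectedness of $G_1$ to glue all the clouds together. The engine throughout is Lemma \ref{papillonlemma}, which records exactly which vertices of the product an edge of $G_1$ produces, so that connectedness of $G_1\zig G_2$ can be read off combinatorially from the neighbourhoods in $G_2$ together with the hypothesis on $\mathcal{N}$.

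First I would fix $v\in V_1$. Since $G_1$ is $d$-regular with $d=|V_2|$, the labels of the $d$ edges issuing from $v$ are precisely the elements of $[d]=V_2$. For each $k\in V_2$ write $\rot_{G_1}(v,k)=(w,l)$; applying Lemma \ref{papillonlemma} to this edge, the set $B_k:=\{(v,k'):k'\in N_k\}$ forms one side of a copy of $K_{d_2,d_2}$ inside $G_1\zig G_2$. In particular all vertices of $B_k$ have a common neighbour, namely any $(w,l')$ with $l'\in N_l$, so $B_k$ lies in a single connected component.

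Next I would merge these blobs using the connectedness of $\mathcal{N}$. The key observation is that $(v,m)\in B_{k_1}\cap B_{k_2}$ holds exactly when $m\in N_{k_1}\cap N_{k_2}$, so $B_{k_1}$ and $B_{k_2}$ meet precisely when $N_{k_1}\cap N_{k_2}\neq\emptyset$, which is exactly the condition for $N_{k_1}$ and $N_{k_2}$ to be adjacent in $\mathcal{N}$. Hence a path $N_{h_0},N_{h_1},\ldots,N_{h_m}$ in $\mathcal{N}$ translates into a chain of blobs $B_{h_0},\ldots,B_{h_m}$ in which consecutive members overlap, forcing them into one component. As $\mathcal{N}$ is connected, all the $B_k$ with $k\in V_2$ lie in a single component; and since $G_2$ is regular of positive degree, every vertex of $G_2$ has a neighbour, whence $\bigcup_{k\in V_2}N_k=V_2$ and therefore $\bigcup_{k}B_k=\{v\}\times V_2$ is the whole cloud. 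This completes the first stage.

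For the second stage I would invoke the connectedness of $G_1$. If $v\sim w$ in $G_1$ with $\rot_{G_1}(v,k)=(w,l)$, then the $K_{d_2,d_2}$ furnished by Lemma \ref{papillonlemma} contains an edge joining a vertex of the $v$-cloud to a vertex of the $w$-cloud; since both clouds are internally connected by the first stage, adjacent clouds lie in the same component, and connectedness of $G_1$ propagates this to all clouds. Combining the two stages yields that $G_1\zig G_2$ is connected. I expect the only delicate point to be the bookkeeping that identifies blob overlap with adjacency in $\mathcal{N}$, together with the small but essential remark that every vertex of $G_2$ has a neighbour so that the blobs cover the entire cloud; everything else is a direct application of Lemma \ref{papillonlemma} under the connectivity hypotheses on $\mathcal{N}$ and $G_1$.
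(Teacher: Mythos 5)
Your proof is correct and takes essentially the same approach as the paper's: both arguments first show that each cloud $\{v\}\times V_2$ lies in a single connected component by converting a path in $\mathcal{N}$ into a chain of overlapping vertex sets whose common neighbours are supplied by Lemma \ref{papillonlemma}, and then glue adjacent clouds together using the connectedness of $G_1$. Your explicit blobs $B_k$ and the covering observation $\bigcup_{k\in V_2} N_k = V_2$ simply make precise two small points that the paper's proof leaves implicit.
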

\begin{proof}
It is enough to show that for all $h, h'\in V_2$ and $v\in V_1$,
the vertices $(v,h)$ and $(v,h')$ are in the same connected
component of $G_1\zig G_2$. In fact, if $v\neq v'$ are adjacent
vertices in $G_1$, there exist $k,k'\in V_2$ such that
$\rot_{G_1}(v,k)=(v',k')$. This implies that the vertices
$(v,\widetilde{k})$ and $(v', \widetilde{k'})$ are connected in
$G_1\zig G_2$, for all $\widetilde{k}\in N_k$ and
$\widetilde{k'}\in N_{k'}$. Hence, if two vertices $v$ and $w$ are
connected by a path in $G_1$, there exists a path in
 $G_1\zig G_2$ connecting $(v,
h)$ and $(w,k)$, for suitable $h,k\in V_2$. By combining this property
with the fact that $(v,h)$ and $(v,h')$ are in the same connected
component of $G_1\zig G_2$ for every $v\in V_1, h,h'\in V_2$, we get the assertion.

So, let us show now that $(v,h)$ and $(v,h')$ belong to the same connected component. Since
$\mathcal{N}$ is connected, there exists
a sequence $N_{h_1},\ldots, N_{h_i}$ such that $h\in N_{h_1}$,
$h'\in N_{h_i}$ and $N_{h_j}\cap N_{h_{j+1}}\neq \emptyset$, for
$j=1,\ldots,i-1$. Let $k_j\in
N_{h_j}\cap N_{h_{j+1}}$. Notice that $(v,h)$ and $(v,k_1)$ are
connected in $G_1\zig G_2$, since they have the common neighbor $(w,s)$, where
$\rot_{G_1}(v,h_1)=(w,s')$ and $s\in N_{s'}$. The same can be said
for $(v,k_1)$ and $(v,k_2)$, as they share a neighbor of the form
$(u,t)$ where $\rot_{G_1}(v,h_2)=(u,t')$ and $t\in N_{t'}$. By using the same argument,
we can say that $(v,k_{i-1})$ and $(v,h')$ are in the same connected
component of $G_1\zig G_2$. This ensures that $(v,h), (v,k_1), \ldots, (v,k_{i-1}), (v,h')$
(and in particular $(v,h)$ and $(v,h')$)
are in the same connected component of $G_1\zig G_2$ and this concludes the proof.
\end{proof}
From now on, the complete graph on $n$ vertices will be denoted by
$K_n$, and the cycle graph on $n$ vertices will be denoted by
$C_n$.
\begin{cor}\label{corollariomisterioso}
Let $G=(V,E)$ be a $d$-regular graph. Then for every $d\geq 3$,
the graph $G\zig K_d$ is connected. If $d\geq 3$ is odd, then the
graph $G\zig C_d$ is connected.
\end{cor}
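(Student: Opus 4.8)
The plan is to deduce both statements directly from Theorem \ref{teoremaconnessione}: it suffices to check that the neighborhood graph $\mathcal{N}$ of the second factor is connected in each case. Thus the whole argument reduces to computing $\mathcal{N}$ for $K_d$ and for $C_d$, and no further combinatorial input beyond Theorem \ref{teoremaconnessione} is needed.

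First I would treat $G_2=K_d$. Here $V_2=\{1,\ldots,d\}$ and every vertex is adjacent to all the others, so $N_h=V_2\setminus\{h\}$ for each $h$. For $d\geq 3$ these $d$ sets are pairwise distinct, and for any $h\neq k$ one has $N_h\cap N_k=V_2\setminus\{h,k\}$, which is nonempty precisely because $d\geq 3$. Hence every two vertices of $\mathcal{N}$ are adjacent, i.e. $\mathcal{N}\cong K_d$ is connected, and Theorem \ref{teoremaconnessione} yields the connectedness of $G\zig K_d$.

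Next I would treat $G_2=C_d$, identifying $V_2$ with $\mathbb{Z}_d$ so that $N_h=\{h-1,h+1\}$; for odd $d\geq 3$ the sets $N_h$ are pairwise distinct, so I may label the vertex $N_h$ of $\mathcal{N}$ simply by $h$. The key computation is the adjacency rule in $\mathcal{N}$: for $h\neq k$, the intersection $\{h-1,h+1\}\cap\{k-1,k+1\}$ is nonempty if and only if $k\equiv h\pm 2 \pmod{d}$. Therefore $\mathcal{N}$ is exactly the circulant graph on $\mathbb{Z}_d$ with connection set $\{\pm 2\}$. When $d$ is odd, $2$ generates $\mathbb{Z}_d$ since $\gcd(2,d)=1$, so this graph is a single $d$-cycle, hence connected; applying Theorem \ref{teoremaconnessione} again gives that $G\zig C_d$ is connected.

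The only delicate point is the parity hypothesis in the cycle case. When $d$ is even the increment-by-$2$ map preserves residues modulo $2$, so $\mathcal{N}$ splits into the class of even indices and the class of odd indices and is therefore disconnected; the sufficient condition of Theorem \ref{teoremaconnessione} then gives no information. This is consistent with the fact, analyzed in the following section, that $G\zig C_d$ genuinely need not be connected for even $d$. Apart from isolating this parity observation, both cases are routine neighborhood-graph computations.
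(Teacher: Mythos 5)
Your proposal is correct and follows essentially the same route as the paper: the paper's proof likewise invokes Theorem \ref{teoremaconnessione} after observing that the neighborhood graph of $K_d$ is isomorphic to $K_d$ and that of $C_d$ (for odd $d$) is isomorphic to $C_d$. You merely spell out the details the paper leaves implicit, and your closing parity observation for even $d$ matches the remark the paper makes immediately after the corollary.
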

\begin{proof}
It suffices to observe that the neighborhood graph associated with
$K_d$ is isomorphic to $K_d$ itself; similarly, it is
straightforward to check that the neighborhood graph associated
with $C_d$ is isomorphic to $C_d$.
\end{proof}
\begin{remark}\rm
Notice that the condition of the previous theorem is not a
necessary condition. In the case of the Schreier graphs
$\{\Gamma_n\}_{n\geq 1}$ of the Basilica group discussed in
Section \ref{Basilicasection}, the zig-zag product $\Gamma_n \zig
C_4$ is connected, for every $n\geq 1$, even if the neighborhood graph $\mathcal{N}$
associated with the cycle graph $C_4$ consists of two connected
components.
\end{remark}

\section{Isomorphism properties}\label{sectionclassifications}
In what follows we focus our attention on the case when the factor graph $G_2$ in the zig-zag product $G_1\zig G_2$
is a cycle graph. This assumption allows us to give precise results about the
structure of the connected components of $G_1\zig G_2$.

We have seen in Corollary \ref{corollariomisterioso} that if
$d\geq 3$ is an odd integer, then the zig-zag product $G\zig C_d$
is always connected, independently of the bi-labelling of the edges of
$G$. For this reason, our analysis will be restricted to the case
$G\zig C_d$, with an even $d$.

\subsection{Parity blocks}\label{subsectionblocks}
Let $G=(V,E)$ be a $d$-regular bi-labelled graph, where $d$
is an even natural number; recall that an edge joining two
vertices $u$ and $w$ in $G$ is colored by some color $i$ near $u$
and by some color $j$ near $w$. As usual, we identify the set of
colors with the set $[d]$. Let $[d_e]$ (resp. $[d_o]$) be the
subset of $[d]$ consisting of the even (resp. odd) numbers
from $1$ to $d$, so that $[d]=[d_e]\sqcup [d_o]$. Given $v\in V$,
and chosen one of the sets $[d_i]$, $i\in \{e,o\}$, the
\textit{parity block} $P(v,i)=(V(v,i),E(v,i))$ is the subgraph of
$G$ defined as follows:
\begin{itemize}
\item $V(v,i)$ is the set of all vertices $w\in V$ with the property
that there exists a path
$\mathcal{P}=\{v=v_0,v_1,\ldots,v_{n-1},w=v_n\}$ in $G$ such that
the following \textit{parity properties} are satisfied:
\begin{enumerate}
\item $\rot_G(v_k,i_k)=(v_{k+1},j_k)$, for $k=0,\ldots, n-1$;
\item $i_0\in [d_i]$; \item $i_{k+1}\equiv j_k \pmod{2}$;
\end{enumerate}
\item $E(v,i)$ consists of the edges joining two consecutive vertices $v_k$ and
$v_{k+1}$ in $\mathcal{P}$, and bi-labelled according with the bi-labelling of $G$, described by the
rotation map $\rot_G(v_k,i_k)=(v_{k+1},j_k)$.
\end{itemize}
A vertex $w\in P(v,i)$ is said to be \textit{even} or with parity
$e$ (resp. \textit{odd} or with parity $o$) in $P(v,i)$ if the
path $\{v=v_0,v_1,\ldots,v_{n-1},w=v_n\}$ is such that $j_{n-1}\in
[d_e]$ (resp. $j_{n-1}\in [d_o]$). If a vertex $w$ is both even
and odd, we will say that $w$ is \textit{odden} or with parity
$e-o$. In other words, the vertex $w$ is odden if and only if
$P(w,e)$ and $P(w,o)$ coincide.

Since $G$ is finite, $G$ decomposes into a finite number of
$P(v,i)$'s, in the sense that every edge in $E$ belongs to some
graph $P(v,i)$ for some $v\in V$ and $i\in \{e,o\}$. We write
$G=\cup_{j} P(v_j,i_j)$ where $j$ runs over an opportune finite
index set. Notice that a vertex which is either even or odd in a
parity block has degree $d/2$ in that parity block, whereas an odden vertex has
degree $d$ in the parity block.
\begin{lemma}
Let $P(v,i)$ be a parity block of $G=(V,E)$. If $w$ is a
vertex in $G$ with parity $i_w$ in $P(v,i)$, then $P(v,i) =
P(w,i_w)$.
\end{lemma}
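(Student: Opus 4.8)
The plan is to exploit the fact that $\rot_G\circ\rot_G$ is the identity, so that every admissible path can be run backwards, and to check that the parity conditions (1)--(3) defining a parity block are symmetric under this reversal. Concretely, suppose the vertex $w$ is realized in $P(v,i)$ by a path $\{v=v_0,v_1,\ldots,v_n=w\}$ with colors $\rot_G(v_k,i_k)=(v_{k+1},j_k)$, with $i_0\in[d_i]$ and $i_{k+1}\equiv j_k\pmod 2$, so that $j_{n-1}$ has parity $i_w$. Reversing the involution gives $\rot_G(v_{k+1},j_k)=(v_k,i_k)$, so the reversed walk $\{w=v_n,v_{n-1},\ldots,v_0=v\}$ leaves $w$ with color $j_{n-1}$ (of parity $i_w$) and, at each subsequent vertex $v_k$, departs with color $j_{k-1}$ after having arrived with color $i_k$. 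The original congruence $i_k\equiv j_{k-1}\pmod 2$ is exactly the parity condition (3) needed for this reversed walk, and its initial color $j_{n-1}$ lies in $[d_{i_w}]$, which is condition (2). Hence the reversed walk is an admissible path for $P(w,i_w)$ reaching $v$; moreover it arrives at $v$ with color $i_0$, so $v$ has parity $i$ in $P(w,i_w)$.

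First I would use this to prove the inclusion $P(v,i)\subseteq P(w,i_w)$, for both vertices and edges. Let $u$ be any vertex of $P(v,i)$, realized by a forward path from $v$ whose first color has parity $i$ by condition (2). I glue this forward path onto the reversed path from $w$ to $v$ constructed above. The only new constraint introduced by the concatenation occurs at the splice vertex $v$: the reversed path arrives at $v$ with color $i_0$ (parity $i$), while the forward path departs from $v$ with a color of parity $i$. Since both colors share the same parity, condition (3) holds at the splice, so the concatenation is an admissible path for $P(w,i_w)$, its first color still having parity $i_w$. This places $u$, together with every edge traversed by the forward path, inside $P(w,i_w)$; letting $u$ and the chosen edges range over all of $P(v,i)$ yields $V(v,i)\subseteq V(w,i_w)$ and $E(v,i)\subseteq E(w,i_w)$.

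The reverse inclusion then follows by symmetry: we have shown $v\in P(w,i_w)$ with parity $i$, so applying the same argument with the roles of $(v,i)$ and $(w,i_w)$ interchanged gives $P(w,i_w)\subseteq P(v,i)$, whence $P(v,i)=P(w,i_w)$. The point to get right is the reversal step: one must verify that the single chain of congruences $i_{k+1}\equiv j_k\pmod 2$ simultaneously certifies admissibility of the forward walk and of its reverse, and that the boundary colors $i_0$ and $j_{n-1}$ carry precisely the parities $i$ and $i_w$. Once this symmetry is in place the gluing at $v$ is immediate, because two colors of equal parity always satisfy condition (3), so no further case analysis is required.
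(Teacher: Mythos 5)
Your proof is correct and follows essentially the same route as the paper's: both hinge on the fact that $\rot_G\circ\rot_G$ is the identity, so the realizing path can be reversed, and the chain of congruences $i_{k+1}\equiv j_k \pmod 2$ certifies admissibility of the reversed walk, giving $v\in P(w,i_w)$ with parity $i$. The paper stops there with the remark that this suffices, whereas you additionally spell out the gluing-at-$v$ argument for the inclusion $P(v,i)\subseteq P(w,i_w)$ and the symmetric reverse inclusion; this is exactly the detail the paper leaves implicit, and your verification of it is sound.
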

\begin{proof}
By definition $w$ has parity $i_w$ in $P(v,i)$ if there
exists a path $\{v=v_0,v_1,\ldots,v_{n-1},w \\
=v_n\}$ in
$P(v,i)$, with $\rot_G(v_k,i_k)=(v_{k+1},j_k)$, such that $i_0\in [d_i]$ and
$j_{n-1}\in [d_{i_w}]$.
In order to prove our statement, it is enough to show that $v\in
P(w, i_w)$. Notice that the inverse path
$\{w=w_0,w_1,\ldots,w_{n-1},v=w_n\}$, with $w_i = v_{n-i}$,
satisfies the parity conditions, with the property that $v\in
P(w,i_w)$ with parity $i_v=i$, as $\rot_G(w_{n-1},j_0) = (v,i_0)$.
\end{proof}

This result ensures that, given a bi-labelled graph $G$, a
decomposition of $G$ into parity blocks is uniquely determined, and we are allowed to use the
notation $G=\cup_{j} P_j$,
without explicitly expressing the dependence of the parity blocks
on the particular vertices. Given a bi-labelled graph $G$, we will
refer to its decomposition into parity blocks as its
\textit{parity block decomposition}. We will often write $|P_j|$ to denote the number of vertices belonging to the parity block $P_j$.
\begin{example}\label{esempiocusano} \rm
Consider the graph $K_5$ endowed with the bi-labelling in the
figure below. Its parity block decomposition consists of two
parity blocks  $P_1$ and $P_2$, with
$$
P_1 = P(0,e) = P(0,o) = P(1,o) = P(2,o) = P(3,e)=P(4,e)
$$
$$
P_2 = P(1,e) = P(2,e) = P(3,o) = P(4,o),
$$
and so $K_5 = P_1\cup P_2$. Observe that the vertex $0$ is odden, the
vertices $1,2$ are odd, and the vertices $3,4$ are even in $P_1$; the vertices $1$ and $2$ are
even and the vertices $3$ and $4$ are odd in $P_2$.
\begin{center}\unitlength=0,25mm
\begin{picture}(750,190)
\letvertex A=(120,185)\letvertex B=(40,110)\letvertex C=(70,15)
\letvertex D=(170,15)\letvertex E=(200,110)

\put(110,-16){$K_5$}\put(320,-16){$P_1$}\put(530,-16){$P_2$}

\drawundirectededge(A,B){}\drawundirectededge(B,C){}\drawundirectededge(C,D){}\drawundirectededge(D,E){}\drawundirectededge(A,E){}
\drawundirectededge(A,C){}\drawundirectededge(E,C){}\drawundirectededge(E,B){}\drawundirectededge(D,B){}\drawundirectededge(A,D){}

\drawvertex(A){$\bullet$}\drawvertex(B){$\bullet$}
\drawvertex(C){$\bullet$}\drawvertex(D){$\bullet$}\drawvertex(E){$\bullet$}

\put(115,192){$0$} \put(28,103){$1$} \put(65,-2){$2$}
\put(165,-2){$3$}\put(205,103){$4$}
\put(83,160){$2$} \put(100,150){$3$} \put(133,150){$1$}
\put(150,158){$4$}

\put(55,132){$1$}\put(63,113){$2$}\put(65,92){$3$}
\put(40,75){$4$}

\put(50,40){$2$}\put(80,40){$1$}\put(95,21){$3$}\put(93,1){$4$}

\put(140,1){$3$}\put(140,21){$2$}\put(152,40){$4$}\put(185,40){$1$}

\put(192,75){$3$}\put(166,92){$2$}\put(170,113){$1$}\put(179,132){$4$}


\letvertex AA=(330,185)\letvertex BB=(250,110)\letvertex CC=(280,15)
\letvertex DD=(380,15)\letvertex EE=(410,110)

\drawundirectededge(AA,BB){}\drawundirectededge(AA,EE){}
\drawundirectededge(AA,CC){}\drawundirectededge(EE,CC){}\drawundirectededge(DD,BB){}\drawundirectededge(AA,DD){}

\drawvertex(AA){$\bullet$}\drawvertex(BB){$\bullet$}
\drawvertex(CC){$\bullet$}\drawvertex(DD){$\bullet$}\drawvertex(EE){$\bullet$}

\put(325,192){$0$} \put(238,103){$1$} \put(275,-2){$2$}
\put(375,-2){$3$}\put(415,103){$4$}

\put(295,160){$2$} \put(310,150){$3$} \put(343,150){$1$}
\put(360,158){$4$}

\put(265,132){$1$}\put(275,92){$3$}\put(290,40){$1$}\put(305,21){$3$}\put(350,21){$2$}\put(362,40){$4$}\put(376,92){$2$}\put(389,132){$4$}

\letvertex BBB=(460,110)\letvertex CCC=(490,15)
\letvertex DDD=(590,15)\letvertex EEE=(620,110)

\drawundirectededge(BBB,CCC){}\drawundirectededge(CCC,DDD){}\drawundirectededge(DDD,EEE){}\drawundirectededge(BBB,EEE){}

\drawvertex(BBB){$\bullet$}\drawvertex(CCC){$\bullet$}\drawvertex(DDD){$\bullet$}\drawvertex(EEE){$\bullet$}

\put(448,103){$1$}\put(485,-2){$2$}
\put(585,-2){$3$}\put(625,103){$4$}

\put(483,113){$2$}\put(460,75){$4$}

\put(470,40){$2$}\put(513,1){$4$}

\put(560,1){$3$}\put(605,40){$1$}

\put(612,75){$3$}\put(600,113){$1$}
\end{picture}
\end{center}
\vspace{0.5cm}
\end{example}\vspace{0.5cm}
In what follows, we will use the convention that the sum of two
elements $i,j\in [d]$ is given by
\begin{eqnarray}\label{somma}
i+j= \left\{
                                             \begin{array}{ll}
                                               i+j, & \hbox{if $i+j\leq d$;} \\
                                               i+j-d, & \hbox{if $i+j > d$.}
                                             \end{array}
                                           \right.
\end{eqnarray}
The interest in the parity block decomposition is justified by the
following result.
\begin{thm}\label{decomposition}
Let $G=(V,E)$ be a regular graph of even degree $d\geq 3$, with
parity block decomposition $G=\cup_{j\in J} P_j$. Let $C_d$ be
the cycle graph of length $d$, with vertex set $[d]$. Then there
is a one-to-one correspondence between the set of parity blocks
$\{P_j\}_{j\in J}$ and the set of connected components
$\{S_j\}_{j\in J}$ of the zig-zag product $G\zig C_d$.
\end{thm}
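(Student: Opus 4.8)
The plan is to build an explicit bijection sending each parity block $P_j$ to the connected component $S_j$ of $G\zig C_d$ that it ``supports'', and to identify $S_j$ concretely. The starting point is Lemma \ref{papillonlemma}: since $C_d$ is $2$-regular with $N_k=\{k-1,k+1\}$, every edge of $G$ with $\rot_G(v,a)=(w,b)$ produces in $G\zig C_d$ a papillon (a copy of $K_{2,2}$) on the four vertices $(v,a-1),(v,a+1),(w,b-1),(w,b+1)$. The crucial bookkeeping observation is that the two cycle-coordinates $a\pm1$ on the $v$-side both have parity opposite to $a$. Consequently, an edge of $G\zig C_d$ incident to $(v,h)$ always arises from a jump $\rot_G(v,k')=(w,l')$ whose $v$-color $k'\in\{h-1,h+1\}$ has parity $\overline h$ (the opposite of the parity of $h$), and lands on the $w$-side on a coordinate $l'\pm1$ of parity $\overline{l'}$. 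Thus the parity of the cloud-coordinate $h$ controls precisely which $G$-colors near $v$ are active for $(v,h)$, and there is a systematic parity flip between $h$ and the $G$-colors it uses.

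With this in hand I define the correspondence. Recall that in a parity block $P_j$ each non-odden vertex $v$ has a well-defined parity (and uses near $v$ exactly the $d/2$ colors of that parity), while an odden vertex uses all $d$ colors. I set
$$
S_j=\big\{(v,h)\in V\times[d] : v\in V(P_j) \text{ and the parity of } v \text{ in } P_j \text{ is } \overline h\big\},
$$
with an odden $v$ contributing its whole cloud; equivalently, $(v,h)\in S_j$ iff $P(v,\overline h)=P_j$. First I check that $\{S_j\}_{j\in J}$ partitions the vertex set of $G\zig C_d$. For coverage, any $(v,h)$ is incident to an edge coming from some jump $\rot_G(v,k')=(w,l')$ with $k'$ of parity $\overline h$; that edge of $G$ lies in some block $P_j$, which therefore uses a color of parity $\overline h$ near $v$, forcing $v$ to have parity $\overline h$ in $P_j$ and hence $(v,h)\in S_j$. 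For disjointness, if $(v,h)\in S_j\cap S_{j'}$ then $v$ has parity $\overline h$ in both blocks, so by the lemma asserting $P(v,i)=P(w,i_w)$ we get $P_j=P(v,\overline h)=P_{j'}$.

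Next I show that each $S_j$ is closed under adjacency in $G\zig C_d$, hence a union of components. Take $(v,h)\in S_j$ and a neighbor $(w,h')$, arising from $\rot_G(v,k')=(w,l')$ with $k'\in\{h\pm1\}$ and $h'\in\{l'\pm1\}$. Since $v$ has parity $\overline h$ in $P_j$ and $k'$ has parity $\overline h$, the color $k'$ is one of the block-colors of $P_j$ at $v$, so this edge of $G$ belongs to $P_j$; consequently $w\in V(P_j)$ with parity equal to that of $l'$. As $h'\in\{l'\pm1\}$ has parity $\overline{l'}$, we conclude $(w,h')\in S_j$, proving closure. It then remains to prove that each $S_j$ is connected. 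Here I use that $P_j$, being generated by parity-paths from a base vertex, is a connected subgraph of $G$; lifting a path of $P_j$ to $G\zig C_d$ through the corresponding papillons, and using that within a single cloud the vertices $(v,h)$ and $(v,h+2)$ are joined through the papillon of color $h+1$, I connect the $S_j$-vertices over adjacent clouds and the equal-parity $S_j$-vertices inside each cloud.

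The delicate point, and the step I expect to be the main obstacle, is the internal connectivity of the cloud of an odden vertex $v$, whose fibre in $S_j$ consists of both the even- and the odd-indexed coordinates. These two families are each connected internally by the $+2$ papillon moves, but joining an even-indexed to an odd-indexed vertex of the same cloud requires using the very closed parity-path that witnesses the oddenness of $v$ (a path from $v$ back to $v$ that flips parity). Lifting this closed path, and correcting the coordinate inside each intermediate cloud by $+2$ moves, produces a walk returning to the $v$-cloud with the opposite coordinate parity, thereby linking the two families. Once connectivity is in place, together with closure it forces each $S_j$ to be exactly one connected component; since the $S_j$ are nonempty, pairwise disjoint, and exhaust the vertex set, the assignment $P_j\mapsto S_j$ is the desired one-to-one correspondence.
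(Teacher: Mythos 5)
Your proposal is correct and takes essentially the same route as the paper's proof: both rest on Lemma \ref{papillonlemma} together with the parity flip between cloud coordinates $h$ and the block colors $h\pm 1$, use in-cloud $\pm 2$ papillon moves and the chaining of papillons along parity paths (your closed-path trick for odden clouds is exactly the paper's case of a path from $v$ with parity $i$ to $w=v$ with the opposite parity), while your closure-of-$S_j$ step is just the edge-by-edge form of the paper's argument projecting a hypothetical path of $G\zig C_d$ down to a parity path in $G$. Packaging the argument as an explicit partition $\{S_j\}$ into sets that are both connected and closed under adjacency is a clean reorganization, but involves no genuinely different idea.
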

\begin{proof}
Let $P$ be a parity block in the parity block decomposition of $G$
and let $v\in P$ with parity $i$, where $i\in \{e,o\} $. Notice that
$P$ contains all edges issuing from $v$ and labelled by $j\in
[d_i]$ near $v$. Given $j\in [d_i]$, let $w\in P$ with $\rot_G(v,j)=(w,h)$. Lemma
\ref{papillonlemma} implies that the vertices $(v,j-1),(v,j+1)$
and $(w,h-1), (w,h+1)$ form a papillon subgraph in $G\zig C_d$, so
that these vertices belong to the same connected component of
$G\zig C_d$.

Moreover, one has that all vertices of the type $(v,k)$, with $k\in
[d]\setminus [d_i]$, belong to the same connected component in
$G\zig C_d$. To see that, it is enough to observe that $(v,k)$
belongs exactly to two papillon subgraphs, which connect $(v,k)$
to $(v,k-2)$ and to $(v,k+2)$: by iteration, this implies that there
is a path connecting all $(v,k)$'s with $k\in [d]\setminus [d_i]$.
Hence, we have shown that, if there exists in $P$ a path from $v$
(with parity $i$) to $w$ (with parity $j$ not necessarily different from $i$), then the vertices
$(v,i')$, for every $i' \in [d]\setminus [d_i]$, and $(w,j')$,
for every $j'\in [d]\setminus [d_j]$, belong to the same connected
component of $G\zig C_d$. In order to complete the proof, we show
that, if $v$ has parity $i$ in $P_i$ and $w$ has parity $j$ in
$P_j$, and $P_i\neq P_j$, then the vertices of type $(v,i')$,
$i'\in [d]\setminus [d_i]$ and $(w,j')$, $j'\in [d]\setminus
[d_j]$ belong to distinct connected components $S_i$ and $S_j$ of
$G\zig C_d$. If this is not the case, so that $(v,i')$ and
$(w,j')$ are in the same connected component, then there is a path
$\mathcal{P}=\{(v,i')=(v_0,i_0'), (v_1,i_1'),\ldots,
(w,j')=(v_n,i_n')\}$ in $G\zig C_d$. The vertices $(v_k, i_k')$
and $(v_{k+1}, i_{k+1}')$ are connected if and only if they belong to
a papillon graph, corresponding to the edge $\{v_k,v_{k+1}\}$ in
$G$, such that $\rot_G(v_k, i_k)=(v_{k+1}, j_{k})$ with $i_k\in
\{i_k'\pm 1\}$ and $j_k\in \{i_{k+1}'\pm 1\}$. We want to show
that the path $\mathcal{P}$ in $G\zig C_d$ can be projected onto
$G$, giving rise to a path connecting $v$ and $w$ satisfying the parity
properties: this will give a contradiction. In other words, we
claim that for each $k=0,1,\ldots, n$, the vertex $v_k$ has parity
$t_k$, with $i'_k\in [d]\setminus [d_{t_k}]$, in the parity block
$P_i$. This follows by observing that any $(v_k,i'_k)\in
\mathcal{P}$ exactly belongs to two papillon subgraphs, containing
the vertices $(v_k,i'_k),(v_k,i'_k-2)$ and
$(v_k,i'_k),(v_k,i'_k+2)$, respectively. The corresponding edges
$\{v_{k-1}, v_k\}$ and $\{v_k,v_{k+1}\}$ in $G$ satisfy
$\rot_G(v_{k-1},i_{k-1}) = (v_k, j_{k-1})$ and
$\rot_G(v_{k},i_{k}) = (v_{k+1}, j_{k})$. Since $i_k, j_{k-1}\in
\{i_k'\pm 1\}$, we have $j_{k-1}\equiv i_k \pmod{2}$ and the proof
is completed.
\end{proof}

\begin{remark}\rm
We have shown in the proof of Theorem \ref{decomposition} that, if a vertex $v$ in a parity block $P$ is
even (resp. odd), so that the elements in $[d_e]$ (resp. $[d_o]$)
label the edges issuing from that vertex in $P$, then in the connected
component of $G\zig C_d$ associated with that parity block we find all
vertices $(v, j)$, with $j\in [d_o]$ (resp. $[d_e]$). Analogously,
if a vertex $v$ in a parity block $P$ is odden, then in the
connected component of $G\zig C_d$ associated with that parity block we
find the vertices $(v, j)$, with $j$ belonging to both the sets
$[d_o]$ and $[d_e]$.
\end{remark}

\subsection{Isomorphism classification via pseudo-replacement graphs}\label{subsectioniso}
Once we have described a method for distinguishing the connected
components of the zig-zag product, it is straightforward to investigate the isomorphism classes problem. As before, $G$ is a
regular graph of even degree: in order to simplify the exposition, we put the degree of $G$ equal to $2d$, where $d$ is a positive integer.

We introduce now new graphs, called the
\textit{pseudo-replacement} graphs, which are in a one-to-one
correspondence with the parity blocks of a parity block
decomposition of $G$ (and so with the connected components of
$G\zig C_{2d}$ by virtue of Theorem \ref{decomposition}). We will see that the isomorphism problem for the
connected components of $G\zig C_{2d}$ is equivalent to the analogous
problem for such graphs.

Let $P$ be a parity block in the block decomposition of $G$. The
\textit{pseudo-replacement graph} $R$ associated with $P$ in $G\zig
C_{2d}$ is the graph obtained as described below. We distinguish two
cases.
\begin{enumerate}
\item Suppose that each vertex $v_i\in P$ is either even or odd, so that the degree of $v_i$ in $P$ is $d$.
Let $C_d^e$ (resp. $C^o_d$) be the cycle graph of length $d$ with
vertex set $[(2d)_e]$ (resp. $[(2d)_o]$). Then the pseudo-replacement graph associated with the parity block $P$ is the graph
$R$ in which every even (resp. odd) vertex $v_i$ is replaced by a copy of
the graph $C_d^e$ (resp. $C_d^o$): this implies that $R$ has
$d|P|$ vertices. The vertex $j$ of $C^e_d$ (resp. $C^o_d$) belonging to the copy associated with $v_i$ will be denoted by $(v_i,j)$; therefore, the vertex $(v_i,j)$ is joint to $(v_k,h)$ in $R$ if either
$\rot_G(v_i, j)=(v_k, h)$ or $i=k$ and $h=j\pm 2$.
\item If $v$ is odden, one associates with $v$ two vertices $v_e$, $v_o$ and two disjoint copies $C_d^e$ and $C_d^o$,
and for each of them we proceed as in the case (1).
\end{enumerate}
\begin{example}\label{examplefigure}   \rm
The picture below represents the pseudo-replacement graph $R_1$
corresponding to the parity block $P_1$ of Example
\ref{esempiocusano}. Recall that the vertex $0$ is odden; the
vertices $1,2$ are odd; the vertices $3,4$ are even.
\begin{center}\unitlength=0,26mm
\begin{picture}(480,240)
\letvertex A=(64,190)\letvertex B=(64,150)\letvertex C=(64,90)
\letvertex D=(64,50)\letvertex E=(130,20)
\letvertex F=(170,20)\letvertex G=(236,50)\letvertex H=(236,90)
\letvertex I=(236,150)\letvertex L=(236,190)\letvertex M=(170,220)\letvertex N=(130,220)

\letvertex e=(380,220)
\letvertex f=(380,180)\letvertex g=(420,180)\letvertex h=(420,220)
\letvertex i=(400,140)\letvertex l=(400,100)\letvertex m=(400,60)\letvertex n=(400,20)

\drawundirectededge(B,C){}\drawundirectededge(D,E){}\drawundirectededge(F,G){}\drawundirectededge(H,I){}\drawundirectededge(L,M){}
\drawundirectededge(N,A){}

\drawvertex(A){$\bullet$}\drawvertex(B){$\bullet$}
\drawvertex(C){$\bullet$}\drawvertex(D){$\bullet$}\drawvertex(E){$\bullet$}
\drawvertex(F){$\bullet$}\drawvertex(G){$\bullet$}
\drawvertex(H){$\bullet$}\drawvertex(I){$\bullet$}\drawvertex(L){$\bullet$}\drawvertex(M){$\bullet$}\drawvertex(N){$\bullet$}

\drawvertex(e){$\bullet$}\drawvertex(f){$\bullet$}
\drawvertex(g){$\bullet$}\drawvertex(h){$\bullet$}\drawvertex(i){$\bullet$}
\drawvertex(l){$\bullet$}\drawvertex(m){$\bullet$}
\drawvertex(n){$\bullet$}

\drawundirectededge(e,f){}\drawundirectededge(f,g){}\drawundirectededge(g,h){}\drawundirectededge(h,e){}

\drawundirectedcurvededge(i,l){}\drawundirectedcurvededge(l,i){}\drawundirectedcurvededge(m,n){}\drawundirectedcurvededge(n,m){}

\drawundirectedcurvededge(A,B){}\drawundirectedcurvededge(B,A){}\drawundirectedcurvededge(C,D){}\drawundirectedcurvededge(D,C){}
\drawundirectedcurvededge(E,F){}\drawundirectedcurvededge(F,E){}\drawundirectedcurvededge(G,H){}\drawundirectedcurvededge(H,G){}
\drawundirectedcurvededge(I,L){}\drawundirectedcurvededge(L,I){}\drawundirectedcurvededge(M,N){}\drawundirectedcurvededge(N,M){}

\put(375,226){$1$}\put(375,165){$2$}

\put(415,165){$3$} \put(415,226){$4$}\put(395,146){$1$}

\put(395,85){$3$} \put(396,66){$2$}\put(395,5){$4$}

  \put(435,195){$C_4$} \put(435,115){$C_2^o$}\put(435,35){$C_2^e$}

\put(146,215){$0$} \put(90,230){$(0,2)$}\put(167,230){$(0,4)$}

\put(60,166){$1$} \put(22,143){$(1,3)$}\put(22,188){$(1,1)$}

\put(60,66){$3$} \put(22,43){$(3,4)$}\put(22,88){$(3,2)$}

\put(146,15){$0$} \put(91,5){$(0,1)$}\put(169,5){$(0,3)$}

\put(232,66){$2$} \put(242,43){$(2,1)$}\put(242,88){$(2,3)$}

\put(232,166){$4$} \put(242,143){$(4,2)$}\put(242,188){$(4,4)$}

\put(140,115){$R_1$}

\end{picture}
\end{center}
\end{example}
\begin{remark} \rm
The name pseudo-replacement is justified by the fact that, if $P$
contains no odden vertex (so that $P$ is a $d$-regular graph) and $C'$ is a graph
isomorphic to $C_d^e$ (or $C_d^o$), then $R$ is isomorphic to $P\rr
C'$. Notice that the number of vertices of $R$ is
$$
2d\cdot |\{\text{odden vertices in }P\}| + d\cdot|\{\text{non odden vertices in } P\}|.
$$
\end{remark}

The next result shows that it is enough to consider the graphs
$R$ in order to study the isomorphism classes of the connected components $S$
of the zig-zag product.
\begin{thm}\label{iso}
Let $G=(V,E)$ be a $2d$-regular graph and $C_{2d}$ be the cycle graph of
length $2d$. Let $G=\cup_{k\in J} P_k$ be the parity block
decomposition of $G$. For each $k\in J$, let $S_k$ be the
connected component of $G\zig C_{2d}$ associated with $P_k$, and let $R_k$
be the pseudo-replacement graph associated with $P_k$. Then $S_k\simeq
S_{k'}$ if and only if $R_k\simeq R_{k'}$.
\end{thm}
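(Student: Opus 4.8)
The plan is to by-pass the parity block $P_k$ and relate the connected component $S_k$ and the pseudo-replacement graph $R_k$ \emph{directly}, through an explicit papillon-expansion (medial) construction. First I would record the canonical structure carried by $R_k$: by its very definition $R_k$ is the disjoint union of its \emph{cloud cycles} (one copy of $C_d^e$ or $C_d^o$ for each non-odden vertex of $P_k$, and two such copies for each odden vertex) glued together by the perfect matching $\mathcal{M}$ formed by the edges $\{(v_i,j),(v_k,h)\}$ with $\rot_G(v_i,j)=(v_k,h)$. I then define a graph $\Phi(R_k)$ whose vertices are the edges lying on the cloud cycles, and in which two cycle-edges $e,f$ are declared adjacent whenever there is a matching edge $\{x,y\}\in\mathcal{M}$ with $e$ incident to $x$ and $f$ incident to $y$. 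The first step is to verify that $\Phi(R_k)\simeq S_k$. This is a direct comparison with Lemma \ref{papillonlemma}: a cloud-cycle edge joining $(v,j-2)$ and $(v,j)$ corresponds to the vertex $(v,j-1)$ of $S_k$, and the two cycle-edges incident to an endpoint $(v,j)$ of a matching edge are exactly the vertices $(v,j-1),(v,j+1)$ forming one side of the papillon attached to the edge $\{v,w\}$ of $G$ with $\rot_G(v,j)=(w,h)$. Thus each matching edge of $R_k$ expands to a single papillon of $S_k$, and the bookkeeping $|V(S_k)|=|V(R_k)|=2d\cdot|\{\text{odden}\}|+d\cdot|\{\text{non-odden}\}|$ confirms that $\Phi$ accounts for every vertex.

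Granting $\Phi(R_k)\simeq S_k$, the implication $R_k\simeq R_{k'}\Rightarrow S_k\simeq S_{k'}$ follows by functoriality of $\Phi$: an isomorphism $\varphi\colon R_k\to R_{k'}$ that carries the cloud-cycle/matching decomposition of $R_k$ onto that of $R_{k'}$ sends cycle-edges to cycle-edges and matching edges to matching edges, hence induces a bijection between the vertex sets of $\Phi(R_k)$ and $\Phi(R_{k'})$ preserving the adjacency rule, that is, an isomorphism $S_k\simeq S_{k'}$.

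For the converse I would invert $\Phi$. In $S_k$ the papillons are copies of $K_{2,2}$; every edge of $S_k$ lies in a unique papillon, and (as already noted in the proof of Theorem \ref{decomposition}) every vertex lies in exactly two. Reading the construction backwards, I reconstruct $R_k$ from $S_k$ as follows: the vertices of $R_k$ are the \emph{sides} (the two-element colour classes) of the papillons of $S_k$; each matching edge of $R_k$ joins the two sides of a papillon; and for each vertex $e$ of $S_k$ a cloud-cycle edge of $R_k$ joins the two sides (coming from the two papillons through $e$) that contain $e$. One checks that this yields a graph isomorphic to $R_k$ together with its cloud-cycle/matching decomposition, so that an isomorphism $S_k\simeq S_{k'}$ transports papillons to papillons and therefore produces a decomposition-preserving isomorphism $R_k\simeq R_{k'}$.

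The delicate point common to both directions is \emph{intrinsicness}: the arguments above start from abstract isomorphisms of $R_k$ (resp.\ $S_k$) but require them to respect the cloud-cycle/matching decomposition of $R_k$ (resp.\ the papillon structure of $S_k$). The main obstacle is therefore to prove that these auxiliary structures are determined by the abstract graph. For $S_k$ I would show that the papillons are \emph{exactly} the inclusion-maximal $K_{2,2}$-subgraphs, with each edge in a unique one and each vertex in precisely two, which makes the papillon set invariant under every automorphism. Dually, for $R_k$ I would characterise $\mathcal{M}$ intrinsically: when $d=2$ the cloud cycles degenerate to double edges, so the multi-edges single out the cycle-edges and $\mathcal{M}$ is the set of simple edges; for $d\geq 3$ the cycle-edges are recovered as those lying on the length-$d$ cloud cycles. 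Establishing this invariance rigorously — and handling the degenerate small values of $d$ together with possible coincidences (loops or multiple edges of $G$, or papillons meeting in more than one vertex) — is where the real work lies; once it is in place, both implications close simultaneously through the correspondence $S_k\simeq\Phi(R_k)$.
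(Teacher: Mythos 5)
Your proposal is correct and, despite the functorial packaging, is essentially the paper's own argument: the dictionary vertex of $S_k$ $\leftrightarrow$ cloud-cycle edge of $R_k$, papillon $\leftrightarrow$ matching edge, side of a papillon $\leftrightarrow$ vertex of $R_k$ is exactly how the paper's proof builds $\psi$ from $\phi$ (sending the side $(v,i),(v,i+2)$ to the vertex $(v,i+1)$) and reconstructs $\phi$ from $\psi$, both resting on Lemma~\ref{papillonlemma}. The one ``delicate point'' you isolate --- that an abstract isomorphism of $S_k$ must carry papillons to papillons, and dually that the cloud-cycle/matching decomposition of $R_k$ is intrinsic --- is precisely the step the paper's proof assumes tacitly (it simply declares that the image of a papillon consists of four vertices $(v',i'),(v',i'+2),(w',j'),(w',j'+2)$), so your plan, which at least sketches the maximal-$K_{2,2}$ characterisation needed to justify it, is no less complete than the printed proof.
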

\begin{proof}
Let $\phi: S_k \rightarrow S_{k'}$ be an isomorphism. By Lemma
\ref{papillonlemma}, $S_k$ is the union of a finite number of papillon
graphs. In particular, let $(v,i), (v,i+2)$ and $(w,j),(w,j+2)$ be
joint in a papillon graph, corresponding to an edge connecting $v$
and $w$ in $P_k$, such that $\rot_G(v,i+1)=(w,j+1)$. The image of
such a papillon graph under $\phi$ will consist of the four vertices $(v',i'), (v',i'+2)$ and
$(w',j'),(w',j'+2)$. Let $\psi : R_k \rightarrow R_{k'}$ such that
$\psi(v, i+1)=(v',i'+1)$. Let us show that the map $\psi$ is indeed a
bijection. The injectivity follows from the fact that a pair of
vertices $(v,i),(v,i+2)$ in a papillon graph uniquely determines
the vertex $(v,i+1)$ in $R_k$. The map $\psi$ is surjective by the
hypothesis on $\phi$. Hence, it is enough to prove that $\psi$
preserves adjacency. We have that $(v,i+1)$ and $(w,j+1)$ are
joint by an edge in $R_k$ if and only if the vertices $v,w$ in $P_k$
satisfy $\rot_G(v,i+1)=(w,j+1)$. By Lemma
\ref{papillonlemma}, this is equivalent to saying that there is a
papillon graph in $S_k$ containing $(v,i), (v,i+2)$ and $(w,j),(w,j+2)$.
Since $\phi$ is an isomorphism, this is true if and only if there
is a papillon graph consisting of vertices $(v',i'),
(v',i'+2)$ and $(w',j'),(w',j'+2)$ in $S_{k'}$. This fact is equivalent to saying
that in $R_{k'}$ there is an edge connecting $(v', i'+1)$ and
$(w', j'+1)$. We have proven in this way that $\psi$ preserves the
adjacency.

Conversely, suppose that there exists an isomorphism $\psi :
R_k\rightarrow R_{k'}$ and let $v\in P_k$, $i\in [2d]$ such that
$\psi(v,i+1)=(v',i'+1)$, for some $v'\in P_{k'}$, $i'\in [2d]$. Notice
that the vertices $(v,i+1)$ and $(v',i'+1)$ univocally correspond
to the sets of vertices $(v,i),(v,i+2)$ in $S_k$ and
$(v',i'),(v',i'+2)$ in $S_{k'}$, and each of these pairs of vertices
univocally determine two papillon graphs in $S_k$ and $S_{k'}$,
respectively. Since $\psi$ is an isomorphism, one has $\psi(v,i+3)\sim \psi(v,i+1)$ and
$\psi(v,i-1)\sim \psi(v,i+1)$, as $(v,i+3)\sim (v,i+1)$ and
$(v,i-1)\sim (v,i+1)$ in $R_k$. We define a map $\phi: S_k\rightarrow S_{k'}$ such
that
$$
\phi(v,i) = \left\{
                                  \begin{array}{ll}
                                    (v',i') & \hbox{if } \psi(v,i+3)=(v',i'+3)\\
                                    (v',i'+2) & \hbox{if } \psi(v,i+3) = (v',i'-1).
                                  \end{array}
                                \right.
 $$
As before, the bijectivity of $\psi$
ensures the bijectivity of $\phi$. Let us show now that $\phi$ preserves the adjacency relation. We have that $(v,i)$
and $(w,j)$ are adjacent in $S_k$ if and only if there
exists in $S_k$ a papillon graph containing $(v,i), (v,i\ast 2)$ and
$(w,j),(w,j\star 2)$ for some $\ast, \star \in \{+,-\}$. Suppose, without
loss of generality, that this papillon graph consists of vertices $(v,i),
(v,i+2)$ and $(w,j),(w,j+2)$. This is equivalent to stating that there
is an edge joining $(v,i+1)$ and $(w,j+1)$ in $R_k$; since $\psi$
is an isomorphism, this implies that there is an edge joining
$(v',i'+1)$ and $(w',j'+1)$ in $R_{k'}$. Therefore, in the corresponding
papillon graph of the connected component $S_{k'}$ associated with $R_{k'}$,
there exists an edge joining $(v',i')$ and $(w',j')$. This gives the
assertion.
\end{proof}

We have proven in Theorem \ref{iso} that the isomorphism classes of the connected components
$\{S_i\}$ of $G\zig C_{2d}$ are characterized by the isomorphism classes of the
replacement graphs $\{R_i\}$. On the other hand, it is not true, in general, that the isomorphism classes of the connected
components $\{S_i\}$ of the zig-zag product $G\zig C_{2d}$ are
characterized by the isomorphism classes of the parity blocks $\{P_i\}$, regarded as non bi-labelled graphs, in the
parity block decomposition of $G$.
\begin{example}\label{nonisomorfi}\rm
The following parity blocks in $K_9$ are isomorphic as
non-labelled graphs; however, one can show that the
associated pseudo-replacement graphs are not isomorphic.

\begin{center}
\begin{picture}(625,165)\unitlength=0.23mm
\letvertex A=(170,180)\letvertex B=(115,150)\letvertex C=(80,100)
\letvertex D=(90,50)\letvertex E=(130,5)\letvertex F=(210,5)\letvertex G=(250,50)
\letvertex H=(260,100)\letvertex I=(225,150)
\put(165,190){$0$} \put(101,153){$1$} \put(66,95){$2$}
\put(76,37){$3$}\put(124,-12){$4$}   \put(265,95){$7$}
\put(230,153){$8$} \put(204,-12){$5$}\put(255,37){$6$}

\put(140,165){$8$} \put(149,132){$4$} \put(185,132){$2$} \put(190,165){$6$}

\put(95,122){$4$} \put(103,103){$2$} \put(110,78){$8$}\put(77,70){$6$}

\put(110,13){$2$}\put(125,25){$8$} \put(144,23){$4$} \put(141,-10){$6$}

\put(189,-10){$2$}\put(188,23){$4$}  \put(206,25){$6$} \put(222,13){$8$}

\put(250,70){$4$}\put(220,78){$8$}\put(228,103){$6$} \put(237,122){$2$}

\drawundirectededge(A,C){}\drawundirectededge(A,E){}\drawundirectededge(A,F){}\drawundirectededge(A,H){}\drawundirectededge(C,E){}
\drawundirectededge(C,F){}\drawundirectededge(C,H){}\drawundirectededge(E,F){}

\drawundirectededge(E,H){}\drawundirectededge(F,H){} \drawvertex(A){$\bullet$}\drawvertex(B){$\bullet$}
\drawvertex(C){$\bullet$}\drawvertex(D){$\bullet$}\drawvertex(E){$\bullet$}\drawvertex(F){$\bullet$}
\drawvertex(G){$\bullet$}\drawvertex(H){$\bullet$}\drawvertex(I){$\bullet$}

\letvertex a=(450,180)\letvertex b=(395,150)\letvertex c=(360,100)
\letvertex d=(370,50)\letvertex e=(410,5)\letvertex f=(490,5)\letvertex g=(530,50)
\letvertex h=(540,100)\letvertex i=(505,150)

\put(445,190){$0$} \put(381,153){$1$} \put(346,95){$2$}
\put(356,37){$3$}\put(404,-12){$4$}   \put(545,95){$7$}
\put(510,153){$8$} \put(484,-12){$5$}\put(535,37){$6$}

\drawundirectededge(a,c){}\drawundirectededge(a,e){}\drawundirectededge(a,f){}\drawundirectededge(a,h){}\drawundirectededge(c,e){}
\drawundirectededge(c,f){}\drawundirectededge(c,h){}\drawundirectededge(e,f){}      \drawundirectededge(e,h){}\drawundirectededge(f,h){}

\put(420,165){$8$} \put(429,132){$2$} \put(465,132){$6$}
\put(470,165){$4$}

\put(375,122){$4$} \put(383,103){$8$}
\put(390,78){$6$}\put(357,70){$2$}

\put(390,13){$4$}\put(405,25){$2$} \put(424,23){$8$}
\put(421,-10){$6$}

\put(469,-10){$4$}\put(468,23){$6$}  \put(486,25){$8$}
\put(502,13){$2$}

\put(530,70){$6$}\put(500,78){$2$}\put(508,103){$4$}
\put(517,122){$8$}

\drawvertex(a){$\bullet$}\drawvertex(b){$\bullet$}
\drawvertex(c){$\bullet$}\drawvertex(d){$\bullet$}\drawvertex(e){$\bullet$}\drawvertex(f){$\bullet$}
\drawvertex(g){$\bullet$}\drawvertex(h){$\bullet$}\drawvertex(i){$\bullet$}
\end{picture}
\end{center}

\end{example}
              \vspace{0.5cm}
On the other hand, in the particular case $d=4$, one has that two
pseudo-replacement graphs are isomorphic if and only if they are
associated with two parity blocks of the same size, as the following corollary shows.

\begin{cor}\label{corollario4}
Let $G$ be a $4$-regular graph, with parity block decomposition $G=\cup_{j\in J} P_j$, and let $\{S_j\}_{j\in J}$ be
the set of connected components of $G\zig C_4$. If $|P_i|=|P_j|$
then $S_i\simeq S_j$.
\end{cor}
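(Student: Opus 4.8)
The plan is to derive the statement from Theorem \ref{iso}, which reduces the comparison of the connected components $S_i,S_j$ to that of the associated pseudo-replacement graphs $R_i,R_j$. Thus it suffices to prove that $R_i\simeq R_j$ whenever $|P_i|=|P_j|$. First I would specialize the pseudo-replacement construction to the present case: since $G$ is $4$-regular we have $2d=4$, i.e. $d=2$, so that both auxiliary cycles $C_d^e$ and $C_d^o$ degenerate to the $2$-cycle $C_2$, namely two vertices joined by a double edge. Hence in the construction of $R$ every even or odd vertex of a block $P$ is replaced by a single double edge, every odden vertex by two disjoint double edges, and distinct copies are joined by exactly one cross-edge per vertex coming from $\rot_G$; in particular every vertex of $R$ has degree $3$.

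Next I would identify $R$ explicitly. Splitting each odden vertex into its even and its odd occurrence turns $P$ into a $2$-regular bi-labelled (multi)graph $\widetilde{P}$, and since a parity block is connected through parity-preserving paths, $\widetilde{P}$ is a single cycle, say of length $L=|P|+o$, where $o$ is the number of odden vertices of $P$ (this matches the vertex count $|V(R)|=2|P|+2o$ recorded after the definition of $R$). Under this identification $R$ is the replacement product $\widetilde{P}\rr C_2$: concretely, a $2L$-cycle whose $L$ within-copy (double) edges and $L$ cross-edges alternate. The point I would check with care is that this graph carries no twist, since the cross-edge leaving the forward vertex of a copy is forced by $\rot_G$ to land on the backward vertex of the next copy; thus the whole graph is connected, $3$-regular, and determined up to isomorphism by the single parameter $L$. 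Consequently $R_i\simeq R_j$ if and only if $L_i=L_j$, that is $|P_i|+o_i=|P_j|+o_j$.

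The heart of the argument, and the step I expect to be the main obstacle, is therefore to show that $L=|P|+o$ depends on $|P|$ alone, equivalently that for a $4$-regular graph the number $o$ of odden vertices of a block is a function of $|P|$. I would attack this by a parity analysis along $\widetilde{P}$: at each of its vertices the incoming and outgoing labels are exactly the two colors of one parity class (either $\{2,4\}$ or $\{1,3\}$), and the involution that pairs the two occurrences of an odden vertex is constrained by the cyclic order of $\widetilde{P}$. Tracking this parity datum once around the cycle should force $L$ to be even and pin $o$ down from $|P|$, consistently with Example \ref{esempiocusano}, where $|P_2|=4$ gives $o=0$ while $|P_1|=5$ gives $o=1$. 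Once $L$ is known to be a function of $|P|$, the hypothesis $|P_i|=|P_j|$ yields $L_i=L_j$, hence $R_i\simeq R_j$, and Theorem \ref{iso} gives $S_i\simeq S_j$.
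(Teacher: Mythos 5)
Your first two steps are sound and coincide with the paper's own argument: the paper likewise constructs the spanning sequence $\{v=v_1,\ldots,v_n=v\}$ (the deterministic circuit obtained by following the two same-parity labels at each vertex), observes that odden vertices occur twice and the others once, so that $n=|P|+o$ with $o$ the number of odden vertices, and describes $R$ as an alternating necklace of $n$ double edges and $n$ simple edges; your check that the necklace carries no twist, so that $R_i\simeq R_j$ if and only if $L_i=L_j$, is correct. The genuine gap is exactly the step you flagged as ``the main obstacle'' and then only gestured at: no parity analysis along $\widetilde{P}$ can show that $o$ (hence $L=|P|+o$) is a function of $|P|$, because this is false. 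First, $L$ need not be even: a parity block can be a plain cycle of odd length (label the two cycle ends at each vertex $1$ and $3$; then $o=0$ and $L=|P|$). Second, one labelled $4$-regular graph can contain two blocks with equal $|P|$ and different $o$: take vertices $a_1,\ldots,a_5,b_1,\ldots,b_5$, a $5$-cycle on the $a_i$ with ends labelled $1,3$, two triangles $b_1b_2b_3$ and $b_1b_4b_5$ sharing only $b_1$ (at $b_1$ label $b_1b_2\mapsto 1$, $b_1b_4\mapsto 3$, $b_1b_3\mapsto 2$, $b_1b_5\mapsto 4$, and at $b_2,\ldots,b_5$ label the triangle ends $1,3$), and a $9$-cycle through the nine vertices other than $b_1$ carrying the remaining labels $2,4$. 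This graph is connected and $4$-regular, and its parity blocks are the $5$-cycle ($|P|=5$, $o=0$, component $DC_5$), the bouquet of two triangles ($|P|=5$, $o=1$, spanning path of length $6$, component $DC_6$), and the $9$-cycle. Since $DC_5$ and $DC_6$ have $10$ and $12$ vertices respectively, here $|P_i|=|P_j|$ but $S_i\not\simeq S_j$.

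It is worth saying plainly that the paper's own proof elides the same point: after computing the number of double edges of $R$ as twice the number of odden vertices plus the number of non-odden ones, it asserts that ``the structure of $R$ depends only on the size of $P$'', which is a non sequitur given that $|P_j|$ was defined as the number of \emph{vertices} of $P_j$. Indeed the paper's Examples \ref{esempiocusano} and \ref{esempiosabato} already show $o$ is not determined by $|P|$: two bi-labellings of $K_5$ yield blocks with five vertices and $o=1$ (component $DC_6$) versus $o=5$ (component $DC_{10}$). So your instinct located precisely the defective step; your proposal becomes a complete and correct proof --- and the corollary a true statement --- if the hypothesis $|P_i|=|P_j|$ is read as equality of the numbers of \emph{edges} of the blocks, equivalently of the spanning path lengths $|P|+o$, which is the invariant $n$ through which Proposition \ref{propprop} actually identifies the component as $DC_n$. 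Note also that even if evenness of $L$ could be forced, it would not rescue the argument: $|P|=6$ is compatible with $o=0$, $L=6$ and with $o=2$, $L=8$.
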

\begin{proof}
Consider the vertex $v$ in $P(v,i)$, with $i\in \{e,o\}$. Since the
degree of $G$ is $4$, the set $[4_i]$ contains only two indices, let they be $j_1, j_2$
(actually, one has $[4_e]=\{2,4\}$ and $[4_o]=\{1,3\}$) satisfying
$\rot_G(v,j_1)=(w,h)$ and $\rot_G(v,j_2)=(u,k)$, for some $w,u\in P(v,i)$ and $h,k\in [4]$. Therefore, the
path $\{u,v,w\}$ is contained in the parity block $P(v,i)$. The
same argument applies to any other vertex in the same block, and we can construct in this way an
ordered sequence of vertices $v_1, v_2, \ldots$ of $P(v,i)$. Since $G$ is finite, there exists $n$ such
that $v_{1+i}=v_{n+i}$ for all $i\geq 0$. In fact, if this is not the case, there exists $j>1$ such that
$v_{j+i}=v_{n+i}$ for each $i\geq 0$, which is absurd since $v_j$ would have degree $3$, but the vertices of $P(v,i)$ must have degree $2$ or $4$. Observe that,
if $v_k$ is an odden vertex, then it occurs twice in the sequence. On the other hand, if $v_k$
is either odd or even, then it appears only once.
Hence, any parity block $P$ is determined by a
sequence of vertices $\{v=v_1, \ldots, v_n = v\}$ in $G$. In
particular, $R$ is constituted by an alternate sequence of simple
edges and cycles of length $2$ (as in the figure of Example
\ref{examplefigure}), whose number is equal to twice the number of odden vertices plus
the number of non odden vertices. As a consequence, the structure of $R$ depends only on the size of $P$, and this concludes the proof.
 \end{proof}

Note that the sequence of vertices $\{v_1=v,v_2, \ldots, v_n=v\}$ constitutes an
Eulerian circuit in the parity block $P(v,i)$, satisfying the further property that
$\rot_G(v_i,h_i)=(v_{i+1},k_i)$, with $k_i\equiv h_{i+1} \pmod{2}$, for every $i=1,\ldots, n-2$, and
with $k_{n-1}\equiv h_1 \pmod{2}$. Moreover, a vertex which is either odd or even is visited
once by the cycle, whereas an odden vertex is visited twice. We will call such a circuit a \textit{spanning path} of the parity block $P(v,i)$.
\begin{remark}\rm
It is known that every connected $2d$-regular graph $G=(V,E)$ admits an Eulerian circuit;
this ensures that, given a connected $2d$-regular graph $G$, there exists at least a bi-labelling of
the edges of $G$ such that the graph $G\zig C_{2d}$ is connected. We will see an explicit application
of this property in Proposition \ref{goodlabel}, where $G$ is the complete graph.
\end{remark}

\subsection{Explicit description of isomorphisms of pseudo-replacement
graphs}\label{subsectionexplicit} In this section we describe
the isomorphisms between pseudo-replacement graphs associated with two isomorphic
(as non-labelled graphs) parity blocks. We have seen in Example \ref{nonisomorfi} that
two parity blocks which are isomorphic as non-labelled graphs can be associated
with two non isomorphic pseudo-replacement graphs (and so with non isomorphic connected components).
Therefore, in what follows, given a $2d$-regular graph $G$, and considering the zig-zag product $G\zig C_{2d}$,
we focus on a parity block $P$, and the natural
question arising in this context is the following. \textit{Given two distinct
bi-labellings $\mathcal{L}, \mathcal{L}'$ of $P$ (and the
relative pseudo-replacement graphs $R$, $R'$ associated with $P$),
under which conditions on $\mathcal{L}$ and $\mathcal{L}'$ are the corresponding connected components $S$ and
$S'$ of $G\zig C_{2d}$ isomorphic?}

In order to answer this question we need to fix some notations. In
what follows, $D_d=\langle a,b | \ a^d=b^2=1\rangle$ denotes the dihedral group with $2d$ elements.
Notice that $D_d$ can be identified with the automorphism group of
the cycle graphs $C_d^e$ and $C_d^o$, with vertices given by
$[(2d)_e]$ or $[(2d)_o]$, respectively. Observe that there exists
a natural bijection $\theta: [(2d)_o] \rightarrow [(2d)_e]$
defined by $\theta(i) =i+1$, for each $i\in [(2d)_o]$. Given an
automorphism $\phi\in D_d \simeq Aut(C_d^e) \simeq Aut(C_d^o)$, we
put
$$
\overline{\phi}(i) =
  \begin{cases}
\phi\big(\theta(i)\big),&\text{if}\ i\in C_d^o\\
\phi\big(\theta^{-1}(i)\big),&\text{if}\ i\in C_d^e.
  \end{cases}
$$
We recall that if $w$ is an odden vertex in a parity block
$P$, so that its degree is $2d$ in $P$, then in the corresponding
pseudo-replacement $R$, $w$ is replaced by two subgraphs isomorphic
to $C_d^o$ and $C_d^e$, respectively. The vertices of $C_d^o$ and $C_d^e$
are denoted by $(w,i)$ and $(w,j)$, with $i\in [(2d)_o]$ and
$j\in [(2d)_e]$, respectively.

Let $V$ and $W$ be the subsets of vertices of $P$ having degree
$d$ and $2d$, respectively. Any automorphism $f$ of
$P$ bijectively maps $V$ into $V$, and $W$ into $W$.
For each vertex $w\in W$, let us introduce a permutation
$\varepsilon_w\in Sym (\{e,o\})$ that will enable us to take into
account the possibility of switching the subgraphs $C_d^e$ and
$C_d^o$ associated with an odden vertex.
Now for any $v\in V$ with parity $i$, let $g_v\in Aut(C_d^i)$, and
for any $w\in W$ let $g_w^e\in Aut(C_d^e)$ and $g_w^o\in
Aut(C_d^o)$. Finally, for each $w\in  W$, we put:
$$
\widetilde{g}_w^{\varepsilon_w(i)}(h)=
  \begin{cases}
    g_w^i(h), & \text{if}\ h\ \text{has parity}\ i\text{ and}\ \varepsilon_w=id, \\
& \\
\overline{g_w^{\varepsilon_w(i)}}(h), & \text{if}\ h\ \text{has
parity}\ i\text{ and}\ \varepsilon_w\neq id.
  \end{cases}
$$
\begin{thm}\label{isoclass}
Let $G$ be a $2d$-regular graph. With the above notations, let us
define the map
$F:R\rightarrow R'$ as
$$
F(u, k)=\left\{
  \begin{array}{ll}
    \big(f(u), g_u(k)\big), & \hbox{if $u\in V$, $u$ and $f(u)$ have parity $i$;}\\
    \big(f(u), \overline{g_u}(k)\big), & \hbox{if $u\in V$, $u$ and $f(u)$ have different parities}\\
  \end{array}
\right.
$$
and
$$
F(u, k)= \Big(f(u), \widetilde{g}_u^{\varepsilon_u(i)}(k)\Big), \qquad
\qquad \text{if } u\in W \ \text{and }k\in [(2d)_i].
$$
If the conditions
\begin{enumerate}
\item \quad $\rot_G(u,h)=(v,k) \ \ \Longrightarrow \ \ \rot_G(f(u),g_u(h))=\big(f(v),g_v(k)\big), \quad \text{ if} \ \ u,v\in V$;
\item \quad $\rot_G(u,h) = (v,k) \ \ \Longrightarrow \ \ \rot_G(f(u),
\widetilde{g}_u^{\varepsilon_u(i)}(h))= \big(f(v), g_v(k)\big)\\ \quad \qquad \text{ \quad if } \ u\in W, v \in V, h\in [(2d)_i]$;
\item \quad $\rot_G(u,h) = (v,k) \ \ \Longrightarrow \ \ \rot_G(f(u),
\widetilde{g}_u^{\varepsilon_u(i)}(h))= \big(f(v),
\widetilde{g}_v^{\varepsilon_v(j)}(k)\big)\\ \quad \qquad \text{ \quad if }\
u,v\in W, h\in [(2d)_i],k\in [(2d)_j]$
\end{enumerate}
hold, then $F$ is an isomorphism between $R$ and $R'$.
\end{thm}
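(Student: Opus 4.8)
The plan is to verify directly that $F$ is a bijection of vertex sets which carries both families of edges of the pseudo-replacement graphs to edges, and then to promote this to an isomorphism by a regularity count. Recall that each of $R$ and $R'$ has two types of edges: the \emph{cycle edges} $(u,k)\sim(u,k\pm 2)$ coming from the copies of $C_d^e$ and $C_d^o$ attached at each vertex, and the \emph{rotation edges} $(u,h)\sim(v,k)$ with $\rot_G(u,h)=(v,k)$ in the relevant labelling. First I would record that $F$ is a bijection: since $f$ is an automorphism of the underlying (non-labelled) parity block it preserves degrees, hence restricts to bijections $V\to V$ and $W\to W$, while on each cloud the second coordinate is transformed by one of $g_u$, $\overline{g_u}$, or $\widetilde{g}_u^{\varepsilon_u(i)}$, each a bijection of the relevant label set. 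As the vertex count of $R$ is $2d\cdot|W|+d\cdot|V|$ and the same expression (with equal cardinalities) computes that of $R'$, the map $F$ is a bijection $R\to R'$.

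Next I would check preservation of the cycle edges. The essential observation is that $\theta\colon[(2d)_o]\to[(2d)_e]$, $\theta(i)=i+1$, is an isomorphism of cycle graphs $C_d^o\to C_d^e$, since it sends a pair $i,i+2$ to the adjacent pair $i+1,i+3$. Hence, for any $\phi\in D_d$, the map $\overline{\phi}$, being a composition of $\theta^{\pm1}$ with the cycle automorphism $\phi$, is a parity-reversing isomorphism between $C_d^o$ and $C_d^e$. Consequently each of $g_u$, $\overline{g_u}$, and $\widetilde{g}_u^{\varepsilon_u(i)}$ carries cycle edges to cycle edges; for $u\in W$ the permutation $\varepsilon_u$ precisely records whether the two cycles $C_d^e,C_d^o$ at $u$ are sent to those at $f(u)$ in the same order or swapped, and in either case within-cloud adjacency is respected.

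The core of the argument, and the step I expect to be the main obstacle, is the preservation of the rotation edges, where the three hypotheses enter. Suppose $(u,h)\sim(v,k)$ in $R$, i.e. $\rot_G(u,h)=(v,k)$ in the labelling defining $R$. According to whether $u,v$ lie in $V$ or in $W$, hypotheses $(1)$, $(2)$ or $(3)$ yield an identity $\rot_G(f(u),-)=(f(v),-)$ in the labelling defining $R'$, whose second coordinates are exactly the images prescribed by $F$; this says $F(u,h)\sim F(v,k)$ is a rotation edge of $R'$. The remaining mixed case $u\in V$, $v\in W$ is handled by the involutivity $\rot_G\circ\rot_G=\mathrm{id}$, which lets us read the relation as $\rot_G(v,k)=(u,h)$ and apply $(2)$ with the roles of $u$ and $v$ exchanged. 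The delicate bookkeeping is to reconcile the second-coordinate maps actually used in the definition of $F$ with those appearing in $(1)$--$(3)$: when $u$ and $f(u)$ have opposite parities $F$ uses the barred map $\overline{g_u}$ rather than $g_u$, and for an odden vertex the active component of $\widetilde{g}_u^{\varepsilon_u(i)}$ may itself be a barred map selected by $\varepsilon_u$. In each such instance one rewrites the hypothesis through the identity $\overline{\phi}=\phi\circ\theta^{\pm1}$ and checks that the parity shift introduced by $\theta$ matches the label appearing on the $f(v)$-side; this is exactly where the precise form of $(2)$ and $(3)$, stated separately for each parity class $h\in[(2d)_i]$, is needed.

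Finally I would close the argument by a regularity count. For $d\geq 3$ every pseudo-replacement graph is $3$-regular (two cycle edges and one rotation edge at each vertex), so $R$ and $R'$, having equally many vertices, have equally many edges. Since $F$ is injective on vertices and sends edges to edges, it induces an injection of the edge set of $R$ into that of $R'$; as the two edge sets have the same cardinality, this injection is onto, whence $F^{-1}$ also preserves adjacency. Therefore $F$ is an isomorphism between $R$ and $R'$, as claimed.
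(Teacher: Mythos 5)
Your proposal is correct and takes essentially the same route as the paper's own proof: establish that $F$ is a bijection, then check adjacency preservation case by case, with the within-cloud cycle edges handled by the fact that $g_u$, $\overline{g_u}$ and $\widetilde{g}_u^{\varepsilon_u(i)}$ are isomorphisms of the cycles $C_d^e$, $C_d^o$, and the rotation edges handled by hypotheses (1)--(3) (your reduction of the mixed case $u\in V$, $v\in W$ to condition (2) via $\rot_G\circ\rot_G=\mathrm{id}$ is exactly what the paper does implicitly by treating each undirected edge only once). Your closing regularity/edge-count argument, which makes explicit that the inverse of the bijective adjacency-preserving map also preserves adjacency, is a detail the paper leaves tacit, and the restriction $d\geq 3$ you impose there is unnecessary, since for $d=2$ the length-two cycles (double edges) still make the pseudo-replacement graphs $3$-regular.
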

\begin{proof}
It is clear that, once fixed $f\in Aut(P)$, together with the
automorphisms $g_u^i$ for $u\in W$, and the automorphisms
$g_u$ for $u\in V$, the map $F$ is a bijection between the set
of vertices of $R$ and $R'$.

Let us prove that $F$ preserves the adjacency relations. Let $u\in
V$ and suppose, without loss of generality, that $u$ is even. In
particular, since $f$ is an automorphism of $P$, one has $f(u)\in
V$. We can suppose that also $f(u)$ is even.
Take a pair of adjacent vertices in $R$. We have two
possibilities. The first one is that the vertices have the form
$(u,h)$ and $(u,h\pm 2)$: therefore $(f(u), g_u(h))$ and $(f(u),
g_u(h\pm 2))=(f(u), g_u(h)\pm 2)$ are adjacent vertices in $R'$.
The second possibility is that the adjacent vertices in $R$ are
$(v,h)$ and $(\widetilde{v}, \widetilde{h})$, with $v\neq
\widetilde{v}$, and $v,\widetilde{v}\in V$, so that the vertices
$f(v)$ and $ f(\widetilde{v})$ are adjacent in $P$, with $f(v)\neq
f(\widetilde{v})$. Moreover, by the definition of $R$, it must be that
$\rot_G(v,h)=(\widetilde{v},\widetilde{h})$. Condition (1) implies that
$\rot_G(f(v),g_v(h))=\big(f(\widetilde{v}),g_{\widetilde{v}}(\widetilde{h})\big)$, and
so $F(v,h)$ and $F(\widetilde{v},\widetilde{h})$ are adjacent
in $R'$.

Consider now the case when the adjacent vertices in $R$
have the form $(w,h)$ and $(v,k)$, with $w\in W$ and $v\in V$. By the
construction of $R$, it must be $\rot_G(w,h)=(v,k)$, with $h\in
[(2d)_i]$, for some $i\in\{e,o\}$. Moreover the definition of $F$ gives
$F(w,h)=\big(f(w), \widetilde{g}_w^{\varepsilon_w(i)}(h)\big)$ and
$F(v,k)=\big(f(v),g_v(k)\big)$. By condition (2), the vertices
$F(w,h)$ and $F(v,k)$ are adjacent in $R'$, and so $F$ preserves
the adjacency relation also in this case.

The case of two adjacent vertices $(w,h)$ and $(w',k)$ in $R$,
with $w,w'\in W$, can be similarly discussed by using condition (3), and this completes the
proof.
\end{proof}

\section{Special cases}\label{sectionspecial}

For a better understanding of the results on zig-zag product $G_1\zig G_2$ obtained in the previous sections,
we consider here two particular cases.

\begin{enumerate}
\item $G_1=K_{2d+1}$ and $G_2=C_{2d}$.
\item $G_2 = C_4$.
\end{enumerate}

\subsection{The case of the complete graph}\label{subsectioncomplete}

Theorem \ref{decomposition} shows that, fixing a bi-labelling
of $K_{2d+1}$, the connected components of $K_{2d+1}\zig C_{2d}$
are in bijection with the set of parity blocks $\{P_j\}$ in the
parity block decomposition of $K_{2d+1}$. Here, one has a
constraint on the size of the parity blocks.
First of all, recall that if $P$ is a parity block
of the graph $K_{2d+1}$, then all its vertices have degree either $d$ or
$2d$. The following proposition holds.
\begin{prop}\label{congruenze}
Consider the complete graph $K_{2d+1}$ on $2d+1$ vertices, endowed with a bi-labelling of its edges, and let $P$ be a parity block
in the parity block decomposition of $K_{2d+1}$. Let $p$ be the
number of vertices in $P$, and let $i$ be the
number of vertices of degree $2d$ in $P$. Then:
\begin{itemize}
\item if $i>0$, then $p=2d+1$ and
$i$ satisfies
$$
(i-1)(d-1)\equiv 0 \pmod{2};
$$
\item if $i=0$, then $p$ satisfies
$$
p\geq d+1 \qquad \textrm{and} \qquad pd \equiv 0 \pmod{2}.
$$
\end{itemize}
Viceversa, given a positive integer $i$ satisfying
$(1-i)(d-1)\equiv 0 \pmod{2}$, there exists a bi-labelling of the edges of $K_{2d+1}$ such that the
associated parity block decomposition contains a parity block of
$2d+1$ vertices, with $i$ vertices of degree $2d$; similarly,
given an integer $p\geq d+1$ satisfying $pd\equiv 0 \pmod{2}$, there
exists a bi-labelling of the edges of $K_{2d+1}$
such that the associated parity block decomposition contains a
parity block of $p$ vertices, all having degree $d$.
\end{prop}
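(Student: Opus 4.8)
The plan is to read the statement through the parity block decomposition of $K_{2d+1}$, using the fact recalled just before it that a non-odden vertex has degree $d$ in its block while an odden vertex has degree $2d$; it helps to picture each block as a component of the auxiliary graph obtained by splitting every vertex $v$ into an \emph{even} and an \emph{odd} copy carrying its $d$ even-, resp. odd-coloured, darts, so that an odden vertex contributes both copies to its block and a non-odden vertex exactly one. For the forward direction I would first settle $p$. If $i>0$, fix an odden vertex $w\in P$; since $K_{2d+1}$ is complete, $w$ is joined to all other $2d$ vertices, and being odden it has all $2d$ of its edges in $P$, so every vertex of $K_{2d+1}$ is reached from $w$ by a single edge and thus $p=2d+1$. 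If instead $i=0$, then $P$ is a simple $d$-regular graph, which already forces $p\ge d+1$. The congruences are then just the handshake lemma applied to $P$: its number of edges equals $\tfrac12\big(2d\cdot i+d\cdot(p-i)\big)=\tfrac12\,d(p+i)$, whence $d(p+i)\equiv 0\pmod 2$; for $i=0$ this reads $pd\equiv 0\pmod 2$, and for $i>0$, inserting $p=2d+1$ and dropping the even term $2d^2$, it reduces to a congruence in $i$ alone, as asserted. I regard this arithmetic as routine once the degree bookkeeping is fixed.

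For the converse I would exhibit explicit bi-labellings. When $i=0$: pick a simple $d$-regular graph $H$ on $p$ vertices inside $K_{2d+1}$ (it exists precisely because $p\ge d+1$, $pd$ is even, and $p\le 2d+1$), colour every edge of $H$ by even colours at \emph{both} of its endpoints — feasible since $H$ is $d$-regular and there are exactly $d$ even colours — and extend arbitrarily to a full rotation map on the remaining darts. Then following even-parity paths out of a vertex of $H$ never leaves $H$, while conversely every edge of $H$ is reached, so $H$ is a parity block with $p$ vertices and no odden vertex. When $i>0$ (and the parity condition holds) I would instead build a single spanning block, in the spirit of the Eulerian-circuit / spanning-path construction used for the connected case (cf. the remark preceding Proposition \ref{goodlabel} and Corollary \ref{corollario4}): trace a closed walk through $K_{2d+1}$ that visits $i$ prescribed vertices twice and the remaining $2d+1-i$ vertices once, and read off, from the two passages through each doubled vertex, the splitting of its darts into an even and an odd colour class; this makes precisely those $i$ vertices odden and forces the associated component of the splitting to span $K_{2d+1}$.

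The hard part will be this last construction. One must choose the closed walk so that (i) the colour classes it induces at every vertex have the correct size $d$ and glue into a \emph{single} connected component of the splitting, and (ii) the realised number of doubled vertices is exactly the prescribed $i$ permitted by the parity condition, rather than some other admissible value. The leverage is that $K_{2d+1}$ is $2d$-regular and hence always Eulerian, so the required closed walks are plentiful; the genuine work is the bookkeeping of the transitions at the doubled vertices, which is what simultaneously pins down $i$ and guarantees that the resulting block is connected and spanning.
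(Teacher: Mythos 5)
Your forward direction is sound and is essentially the paper's: completeness of $K_{2d+1}$ plus the fact that an odden vertex keeps all $2d$ of its edges inside its block gives $p=2d+1$ when $i>0$; $d$-regularity gives $p\geq d+1$ when $i=0$; and the edge count $\frac{1}{2}d(p+i)$ gives the congruences. One caution about ``as asserted'': for $i>0$ your handshake computation yields $d(i+1)\equiv 0 \pmod{2}$, which is equivalent to the condition $(1-i)(d-i)\equiv 0 \pmod{2}$ that the paper's own proof derives, but \emph{not} to the displayed condition $(i-1)(d-1)\equiv 0\pmod{2}$ of the statement: take $d$ odd and $i$ even, where the statement's condition holds while the degree sum $d(2d+1+i)$ is odd. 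The statement's congruence is evidently a misprint for $(i-1)(d-i)$; your arithmetic lands on the correct condition, but your claim that it reduces to the stated one glosses over the mismatch. You also never record the structural constraint, noted in the paper, that $i\leq d$ or $i=2d+1$ (every non-odden vertex is adjacent to all $i$ odden vertices, so its block degree $d$ forces $i\leq d$ unless there are no non-odden vertices); without it the converse for arbitrary $i$ with the parity condition is not even true as you set it up.

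The genuine gap is your converse for $i>0$. The closed-walk construction you sketch imports the $d=2$ picture of Corollary \ref{corollario4}: only for $4$-regular graphs is a parity block swept out by a spanning circuit that uses two darts at a simple passage and four at a double passage. For general $d$, a parity block with $i$ odden vertices has $\frac{1}{2}d(2d+1+i)$ edges, a non-odden vertex carries $d$ block-darts and an odden vertex all $2d$, so a walk visiting vertices once or twice touches far too few darts, and ``reading off the splitting from the two passages'' cannot determine colour classes of size $d$; you flag exactly this bookkeeping as the hard part and leave it unresolved, so the proposal does not prove the converse for $i>0$. The paper's route avoids walks entirely: since all edges meeting an odden vertex lie in the block automatically, the block is determined by its induced graph on the $2d+1-i$ non-odden vertices, which must be $(d-i)$-regular; conversely, any $(d-i)$-regular graph on $2d+1-i$ vertices (existing precisely when $(2d+1-i)(d-i)$ is even, by the classical criterion the paper cites) completes to such a block, which is then labelled exactly as in your correct $i=0$ construction. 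In that $i=0$ construction, by the way, you should additionally require the $d$-regular graph $H$ to be \emph{connected}, since parity blocks are connected by definition; a connected choice exists (e.g., circulant constructions), a point the paper also leaves tacit.
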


\begin{proof}
Let $i>0$ be the number of vertices of degree $2d$ in $P$.
We must have $p = 2d+1$, so that there are
$p-i=2d+1-i$ vertices of degree $d$ in $P$. We want to establish
for which value of $i$ such a configuration is allowed. First of
all, it is easy to check that it must be either $i\leq d$ or
$i=p=2d+1$. In fact, if $i>d$, then necessarily it must be
$i=2d+1$ (since the remaining $2d+1-i$ vertices of $P$ would have
degree at least $d+1$). The configuration $i=2d+1$ is allowed, as
shown in Example \ref{esempiosabato} in the case of $K_5$.

Let us restrict
now to the case $0< i \leq d$. The problem of establishing for
which values of $i$ this situation can occur, is equivalent to the
problem of the existence of a $(d-i)$-regular graph on $2d+1-i$
vertices. On the other hand, it
is known that there exists a $k$-regular graph on $m$ vertices if
and only if $m\geq k+1$ and $mk$ is an even integer \cite[Exercise 8.8]{danielecitazione}.
In our case, these conditions become $2d-i+1\geq d-i+1$ (always
verified) and $(2d+1-i)(d-i) \equiv 0 \pmod{2}$. We can summarize
what we said above, by stating that a parity block of the
complete graph $K_{2d+1}$ has $i$ vertices of degree $2d$,
 and $2d+1-i$ vertices of degree $d$, with $0<i\leq d$, if and only if the
integer $i$ satisfies the condition
$$
(1-i)(d-i)\equiv 0 \pmod{2}.
$$
Now consider the case $i=0$ and let $p$ be the number of vertices
of $P$. Observe that in this case, the problem of establishing
which values of $p$ are allowed is equivalent to the problem of
the existence of a $d$-regular graph on $p$ vertices. The
argument above says that such a configuration is possible if and only if
$$
p\geq d+1 \qquad \textrm{and} \qquad pd \equiv 0 \pmod{2}.
$$
\end{proof}

\begin{example}\label{esempiosabato}\rm
In the picture below the graph $K_5$ has a bi-labelling such that
its parity decomposition consists of only one parity block: therefore, we have in this case $d=2$ and $p=i=5$. In other words, all the vertices of the unique parity block are odden.
\begin{center}  \unitlength=0.25mm
\begin{picture}(250,195) \letvertex A=(120,185)\letvertex B=(40,110)\letvertex C=(70,15)
\letvertex D=(170,15)\letvertex E=(200,110)

\drawundirectededge(A,B){}\drawundirectededge(B,C){}\drawundirectededge(C,D){}\drawundirectededge(D,E){}\drawundirectededge(A,E){}
\drawundirectededge(A,C){}\drawundirectededge(E,C){}\drawundirectededge(E,B){}\drawundirectededge(D,B){}\drawundirectededge(A,D){}

\drawvertex(A){$\bullet$}\drawvertex(B){$\bullet$}
\drawvertex(C){$\bullet$}\drawvertex(D){$\bullet$}\drawvertex(E){$\bullet$}

\put(116,192){$0$} \put(28,103){$1$} \put(65,-2){$2$}
\put(165,-2){$3$}\put(205,103){$4$}

\put(83,160){$1$} \put(100,150){$2$} \put(133,150){$3$}
\put(150,158){$4$}

\put(55,133){$4$}\put(63,113){$1$}\put(65,93){$3$}
\put(40,75){$2$}

\put(50,40){$2$}\put(82,40){$4$}\put(97,21){$1$}\put(93,1){$3$}

\put(140,1){$3$}\put(138,21){$2$}\put(152,40){$1$}\put(185,40){$4$}

\put(192,75){$3$}\put(166,92){$1$}\put(170,113){$2$}\put(179,132){$4$}
\end{picture}
\end{center}
The following pictures represent: the case $d=4$, $i=1$, $p=9$; the case $d=4$, $i=0$, $p=7$; the case $d=4$, $i=2$, $p=9$.
\begin{center}
\begin{picture}(700,135)\unitlength=0.22mm
\letvertex A=(120,180)\letvertex B=(65,150)\letvertex C=(30,100)
\letvertex D=(40,50)\letvertex E=(80,5)\letvertex F=(160,5)\letvertex G=(200,50)
\letvertex H=(210,100)\letvertex I=(175,150)

\drawundirectededge(A,B){}\drawundirectededge(A,C){}\drawundirectededge(A,D){}\drawundirectededge(A,E){}\drawundirectededge(A,F){}
\drawundirectededge(A,G){}\drawundirectededge(A,H){}\drawundirectededge(A,I){}

\drawundirectededge(C,B){}\drawundirectededge(B,H){}\drawundirectededge(B,I){}

\drawundirectededge(C,H){}\drawundirectededge(C,I){}

\drawundirectededge(D,E){}\drawundirectededge(D,F){}\drawundirectededge(D,G){}

\drawundirectededge(E,F){}\drawundirectededge(E,G){}

\drawundirectededge(F,G){}\drawundirectededge(H,I){}
\drawvertex(A){$\bullet$}\drawvertex(B){$\bullet$}
\drawvertex(C){$\bullet$}\drawvertex(D){$\bullet$}\drawvertex(E){$\bullet$}\drawvertex(F){$\bullet$}
\drawvertex(G){$\bullet$}\drawvertex(H){$\bullet$}\drawvertex(I){$\bullet$}
\letvertex AA=(360,180)\letvertex BB=(305,150)\letvertex CC=(270,100)
\letvertex DD=(280,50)\letvertex EE=(320,5)\letvertex FF=(400,5)\letvertex GG=(440,50)
\letvertex HH=(450,100)\letvertex II=(415,150)

\drawundirectededge(AA,BB){} \drawundirectededge(AA,DD){}
\drawundirectededge(CC,AA){}
\drawundirectededge(AA,EE){}
\drawundirectededge(EE,BB){}
\drawundirectededge(FF,BB){}
         \drawundirectededge(GG,BB){} \drawundirectededge(CC,DD){}
\drawundirectededge(CC,FF){}
\drawundirectededge(CC,GG){}
\drawundirectededge(EE,DD){}
\drawundirectededge(DD,GG){}

   \drawundirectededge(FF,GG){}
\drawundirectededge(EE,FF){}

 \drawvertex(AA){$\bullet$}\drawvertex(BB){$\bullet$}
\drawvertex(CC){$\bullet$}\drawvertex(DD){$\bullet$}\drawvertex(EE){$\bullet$}\drawvertex(FF){$\bullet$}
\drawvertex(GG){$\bullet$}\drawvertex(HH){$\bullet$}\drawvertex(II){$\bullet$}


\letvertex a=(590,180)\letvertex b=(535,150)\letvertex c=(500,100)
\letvertex d=(510,50)\letvertex e=(550,5)\letvertex f=(630,5)\letvertex g=(670,50)
\letvertex h=(680,100)\letvertex i=(645,150)

\drawundirectededge(a,b){}\drawundirectededge(a,c){}\drawundirectededge(a,d){}\drawundirectededge(a,e){}\drawundirectededge(a,f){}
\drawundirectededge(a,g){}\drawundirectededge(a,h){}\drawundirectededge(a,i){}

\drawundirectededge(b,c){}\drawundirectededge(b,d){}\drawundirectededge(b,e){}
\drawundirectededge(b,f){}\drawundirectededge(b,g){}\drawundirectededge(b,h){}\drawundirectededge(b,i){}

\drawundirectededge(c,d){}\drawundirectededge(c,i){}\drawundirectededge(d,e){}
\drawundirectededge(e,f){}\drawundirectededge(f,g){}

\drawundirectededge(g,h){}\drawundirectededge(h,i){}

\drawvertex(a){$\bullet$}\drawvertex(b){$\bullet$}
\drawvertex(c){$\bullet$}\drawvertex(d){$\bullet$}\drawvertex(e){$\bullet$}\drawvertex(f){$\bullet$}
\drawvertex(g){$\bullet$}\drawvertex(h){$\bullet$}\drawvertex(i){$\bullet$}
\end{picture}
\end{center}
\end{example}
We recall now the classical definition and the basic properties of circulant and block circulant matrices,
which will play a special role in the description of the adjacency matrix of the graphs in Proposition \ref{goodlabel}
and in the investigation of the spectral properties of a particular sequence of zig-zag product graphs studied in Section \ref{Basilicasection}.
\begin{defi}\label{circulant}
A \textit{circulant matrix} $C$ of size $n$ is a square matrix of
the form
\begin{eqnarray}\label{circulantshape}
 C = \begin{pmatrix}
  c_{0} & c_1 & c_2   & \cdots & \cdots & c_{n-2} & c_{n-1} \\
  c_{n-1}   & c_0     & c_1  & \cdots  & \cdots & c_{n-3} & c_{n-2} \\
  c_{n-2}   & c_{n-1}     & c_0      & \cdots  & \cdots & c_{n-4} & c_{n-3} \\
\vdots  & \vdots & & \ddots & &  \vdots &  \vdots \\
  \vdots & \vdots & & &  \ddots & \vdots & \vdots \\
  c_2 & c_3 & \cdots & \cdots & c_{n-1} &  c_0 & c_1 \\
  c_1 & c_2 & c_3 & \cdots & \cdots & c_{n-1} & c_0
\end{pmatrix},
\end{eqnarray}
with $c_i\in \mathbb{C}$, for every $i=0,1,\ldots, n-1$.
\end{defi}
The spectral analysis of circulant matrices is well known
\cite{davis}. More precisely, it is known that the eigenvectors of
the matrix $C$ are the vectors ${\bf v}_j=\big(1, w^j, w^{2j}, \ldots,
w^{(n-1)j}\big)$, for $j=0,1,\ldots, n-1$, where
$$
w = \exp\left(\frac{2\pi i}{n}\right), \qquad \qquad i^2=-1.
$$
The associated eigenvalues are the complex numbers
$$
\lambda_j = \sum_{k=0}^{n-1}c_k w^{jk}, \qquad \qquad
j=0,1,\ldots, n-1.
$$
More generally, a \textit{block circulant} matrix of type $(n,m)$
is a matrix of the form \eqref{circulantshape}, where $c_i$ is a
square matrix of size $m$ with entries in $\mathbb{C}$, for every $i=0,1,\ldots,n-1$. For the
spectral analysis of a block circulant matrix one can refer, for instance, to the paper \cite{tee}.\\

In the following proposition, we explicitly describe a particular bi-labelling of the
graph $K_{2d+1}$ such that $K_{2d+1}\zig C_{2d}$ is connected for each $d\geq 1$.
We will use the notation $V(K_{2d+1})=\{0,1,\ldots, 2d\}$ and $V(C_{2d})=\{1,\ldots, 2d\}$.
The vertices of $K_{2d+1}\zig C_{2d}$ will be pairs of type $(i,j)$,
with $i\in V(K_{2d+1})$ and $j\in V(C_{2d})$, with the convention
that the sum in the first coordinate is mod $2d+1$, and in the
second component the sum is given by equation \eqref{somma}, where $d$ is now replaced by $2d$.
\begin{prop}\label{goodlabel}
Let $\mathcal{L}$ be the bi-labelling of $K_{2d+1}$ such that, with
the above notation:
$$
\rot_{K_{2d+1}}(i,j)=(i+j, 2d-j+1).
$$
Then $K_{2d+1}\zig C_{2d}$ is connected and there exists an
ordering of the vertices of $K_{2d+1}\zig C_{2d}$ such that the
associated adjacency matrix is a block circulant matrix.
\end{prop}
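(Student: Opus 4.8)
The plan is to split the statement into its two assertions---connectedness and the block circulant form---and treat them separately, both exploiting that $\mathcal{L}$ is the translation-invariant ``difference'' labelling of $K_{2d+1}=\mathrm{Cay}(\mathbb{Z}_{2d+1},\{1,\dots,2d\})$. For connectedness I would first reduce, via Theorem \ref{decomposition}, to proving that $K_{2d+1}$ equipped with $\mathcal{L}$ admits a \emph{single} parity block (the degree here is $2d\ge 4$, so the theorem applies for $d\ge 2$; the case $d=1$ is degenerate and is checked by hand). The crucial elementary observation is that the two labels of every edge have opposite parity: the edge issuing from $i$ with label $j$ reaches $i+j$ with label $2d-j+1$, and $2d-j+1\equiv j+1\pmod 2$. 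Hence crossing an edge \emph{flips} the parity of the label, so along any parity-matching path the outgoing labels alternate in parity, which is always compatible with the condition $i_{k+1}\equiv j_k\pmod 2$.

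It then remains to show that the parity-matching reachability relation on half-edges fuses everything into one block. I would encode this by an auxiliary graph on the $2(2d+1)$ pairs $(v,p)$, with $v\in\mathbb{Z}_{2d+1}$ and $p\in\{e,o\}$: switching between equal-parity labels at a vertex collapses all same-parity half-edges at $v$ into the single node $(v,p)$, while crossing an edge joins $(v,o)$ to $(v+j,e)$ for odd $j$ and $(v,e)$ to $(v+j,o)$ for even $j$. The existence of a single parity block is then equivalent to the connectedness of this auxiliary graph. Two same-parity nodes are linked by a two-step walk whose displacement $j+j'$ can be arranged to be $\equiv 2\pmod{2d+1}$ (for instance $j=3$, $j'=2d$ gives $2d+3\equiv 2$); since $\gcd(2,2d+1)=1$, translations by $2$ act transitively, so all $o$-nodes are mutually connected and likewise all $e$-nodes, while the single edge $(0,o)\sim(1,e)$ bridges the two parities. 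This forces one parity block, whence $K_{2d+1}\zig C_{2d}$ is connected and every vertex is odden.

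For the block circulant statement I would exhibit the cyclic symmetry explicitly. Consider $\sigma\colon (i,j)\mapsto (i+1,j)$ on $V(K_{2d+1})\times V(C_{2d})$. Because $\rot_{K_{2d+1}}(i+1,j)=\sigma\big(\rot_{K_{2d+1}}(i,j)\big)$ and $\rot_{C_{2d}}$ ignores the first coordinate, $\sigma$ carries the papillon attached by Lemma \ref{papillonlemma} to the edge $\rot_{K_{2d+1}}(i,j)$ onto the papillon attached to $\rot_{K_{2d+1}}(i+1,j)$; hence $\sigma$ is an automorphism of $K_{2d+1}\zig C_{2d}$ generating a free $\mathbb{Z}_{2d+1}$-action with $2d$ orbits. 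Ordering the $(2d+1)\cdot 2d$ vertices into $2d+1$ consecutive blocks of size $2d$ according to the $K_{2d+1}$-coordinate, the $\sigma$-invariance of adjacency forces the $(i,i')$ block of the adjacency matrix to depend only on $i'-i\pmod{2d+1}$, which is exactly the block circulant shape of Definition \ref{circulant} (of type $(2d+1,2d)$).

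The main obstacle is the connectedness half, and within it the parity bookkeeping: one must justify the reduction to the auxiliary $(v,p)$-graph and verify the displacement-by-$2$ claim so that $\gcd(2,2d+1)=1$ can be invoked. Once the translation symmetry is identified, the block circulant claim is essentially formal.
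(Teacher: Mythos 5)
Your proposal is correct, and while it shares the paper's skeleton, the mechanism you use for the crucial step is genuinely different. Both you and the paper reduce connectedness via Theorem \ref{decomposition} to showing that $\mathcal{L}$ produces a single parity block, and both start from the same elementary observation that $\rot_{K_{2d+1}}(i,j)=(i',j')$ forces $j+j'=2d+1$, so the two labels of every edge have opposite parities. But the paper's verification is local and explicit: assuming a vertex $i$ is odd in its block, it exhibits the closed path $i \to i+3 \to i+2 \to i$ realized by $\rot_{K_{2d+1}}(i,3)=(i+3,2d-2)$, $\rot_{K_{2d+1}}(i+3,2d)=(i+2,1)$, $\rot_{K_{2d+1}}(i+2,2d-1)=(i,2)$, which satisfies the parity conditions and returns to $i$ with an even incoming label, so every vertex is odden; completeness of $K_{2d+1}$, together with the lemma of Subsection \ref{subsectionblocks} identifying $P(v,i)$ with $P(w,i_w)$, then fuses everything into one block. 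You instead globalize the bookkeeping: you encode parity-respecting reachability as an auxiliary graph on the $2(2d+1)$ states $(v,p)$ and prove it connected by a displacement argument ($j=3$, $j'=2d$ gives net shift $2d+3\equiv 2 \pmod{2d+1}$, and $\gcd(2,2d+1)=1$), plus one bridging edge such as $(0,o)\sim(1,e)$. Interestingly, both arguments use the label $3$ and hence both require $d\geq 2$; you flag and hand-check the degenerate case $d=1$, which the paper silently passes over --- a small point in your favor. What the paper's route buys is brevity and concreteness (no gcd step, no auxiliary formalism); what yours buys is robustness, since the state-graph criterion characterizes the parity blocks of \emph{any} bi-labelling with the opposite-parity property, though you should make explicit that auxiliary components correspond exactly to parity blocks (this rests on the reversibility of parity-respecting paths, i.e., the same lemma of Subsection \ref{subsectionblocks}). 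For the block circulant half, your automorphism $\sigma:(i,j)\mapsto(i+1,j)$, which sends the papillon of Lemma \ref{papillonlemma} attached to one edge to the papillon attached to its translate, is a more formal wrapping of the paper's one-line remark that $\rot_{K_{2d+1}}(i,j)=(i+j,2d-j+1)$ implies $\rot_{K_{2d+1}}(i+1,j)=(i+j+1,2d-j+1)$; both yield the same lexicographic ordering with $2d+1$ blocks of size $2d$, so that part is essentially identical.
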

\begin{proof}
By using the symmetry of our construction and the fact that the
graph $K_{2d+1}$ is complete, we have just to prove, by
Theorem \ref{decomposition}, that the parity block
decomposition of $K_{2d+1}$ consists of only one parity block $P$
such that $deg (i)=2d$ in $P$, for every $i\in V(K_{2d+1})$.
Without loss of generality, we can suppose that the vertex $i$ is
odd in $P$. Furthermore, by the construction of $\mathcal{L}$, we have that the condition
$\rot_{K_{2d+1}}(i,j)=(i', j')$ implies $j+j'=2d+1$, and so $j,
j'$ have different parities. Note that in $K_{2d+1}$ the path
$\{v=i,v_1=i+3,v_2=i+2,i=v\}$ described by
$\rot_{K_{2d+1}}(i,3)=(i+3,2d-2)$,
$\rot_{K_{2d+1}}(i+3,2d)=(i+2,1)$,
$\rot_{K_{2d+1}}(i+2,2d-1)=(i,2)$ satisfies the parity
conditions. This implies that $i$ is also even, so that $i$ is odden. We can repeat the same
construction for every other vertex, possibly reversing the path
for vertices with even parity, so that every vertex of $P$ has degree $2d$.
The second statement follows by observing that
$\rot_{K_{2d+1}}(i,j)=(i+j, 2d-j+1)$ implies
$\rot_{K_{2d+1}}(i+1,j)=(i+j+1, 2d-j+1)$, and so the lexicographic
order of the vertices $\{(i,j) :\ i\in V(K_{2d+1})$, $j\in V(C_{2d})\}$ of $K_{2d+1}\zig C_{2d}$ produces a block
circulant adjacency matrix. More precisely, there are $2d+1$
blocks, indexed by the vertices of $K_{2d+1}$, each of size $2d$.
\end{proof}
\begin{example}\rm
In the following picture, the bi-labelling $\mathcal{L}$ is
represented in the case of $K_5$. Observe that the labels around each vertex of the graph, regarded as integer numbers in $\{1,2,3,4\}$, increase in anticlockwise sense.
\begin{center}
\begin{picture}(250,180)     \unitlength=0.255mm
\letvertex A=(120,185)\letvertex B=(40,110)\letvertex C=(70,15)
\letvertex D=(170,15)\letvertex E=(200,110)

\drawundirectededge(A,B){}\drawundirectededge(B,C){}\drawundirectededge(C,D){}\drawundirectededge(D,E){}\drawundirectededge(A,E){}
\drawundirectededge(A,C){}\drawundirectededge(E,C){}\drawundirectededge(E,B){}\drawundirectededge(D,B){}\drawundirectededge(A,D){}

\drawvertex(A){$\bullet$}\drawvertex(B){$\bullet$}
\drawvertex(C){$\bullet$}\drawvertex(D){$\bullet$}\drawvertex(E){$\bullet$}

\put(116,192){$0$} \put(28,103){$1$} \put(65,-1){$2$}
\put(165,-1){$3$}\put(205,103){$4$}

\put(83,160){$1$} \put(100,150){$2$} \put(133,150){$3$}
\put(150,158){$4$}

\put(55,132){$4$}\put(63,113){$3$}\put(65,93){$2$}
\put(40,75){$1$}

\put(50,40){$4$}\put(82,40){$3$}\put(95,22){$2$}\put(93,1){$1$}

\put(140,1){$4$}\put(138,22){$3$}\put(151,40){$2$}\put(185,40){$1$}

\put(192,75){$4$}\put(166,92){$3$}\put(170,113){$2$}\put(179,132){$1$}
\end{picture}
\end{center}
The adjacency matrix of the graph $K_5\zig C_4$, with the lexicographic order of its vertices, is   given by
$$
\left(
  \begin{array}{ccccc}
    C_0 & C_1 & C_2 & C_3 & C_4 \\
    C_4 & C_0 & C_1 & C_2 & C_3 \\
    C_3 & C_4 & C_0 & C_1 & C_2 \\
    C_2 & C_3 & C_4 & C_0 & C_1 \\
    C_1 & C_2 & C_3 & C_4 & C_0 \\
  \end{array}
\right),
$$
where
$$
C_1=C_3=\left(
      \begin{array}{cccc}
        0 & 0 & 0 & 0 \\
        1 & 0 & 1 & 0 \\
        0 & 0 & 0 & 0 \\
        1 & 0 & 1 & 0 \\
      \end{array}
    \right)        \qquad  \qquad C_2=C_4=\left(
      \begin{array}{cccc}
        0 & 1 & 0 & 1 \\
        0 & 0 & 0 & 0 \\
        0 & 1 & 0 & 1 \\
        0 & 0 & 0 & 0 \\
      \end{array}
    \right)
$$
and $C_0$ is the zero matrix of size $4$.
\end{example}

\subsection{The case of a $4$-regular graph}\label{subsectionquattro}
In this subsection we are interested in the special case of the zig-zag
product $G\zig C_4$, where $G$ is a $4$-regular graph. This
particular choice forces the structure of the zig-zag product
$G\zig C_4$ to be highly regular. Actually, one can order the
vertices of $G\zig C_4$ in such a way that the graph is a disjoint
union of connected components, whose adjacency matrices are all
circulant; therefore, a complete spectral analysis can be
performed in this case. We will see an application to the
case of Schreier graphs associated with group actions in Section \ref{Basilicasection}.

Notice that Corollary \ref{corollario4} implies that the
isomorphism classes of the connected components of $G\zig C_4$ are
determined by the size of the corresponding parity blocks of $G$. We recall that the pseudo-replacement graphs are given by an
alternate sequence of simple edges and cycles of length $2$.

In what follows, a \textit{double cycle graph} $DC_n$ of length
$n$ is a $4$-regular graph with $2n$ vertices in which any vertex belongs
exactly to two papillon graphs. The picture below
represents the graph $DC_{16}$.
\begin{center}
\begin{picture}(350,180)\unitlength=0,13mm

 \put(380,200){$DC_{16}$}

\letvertex A=(420,410)\letvertex B=(496,394)\letvertex C=(560,350)
\letvertex D=(604,286)\letvertex E=(620,210)\letvertex F=(604,134)\letvertex G=(560,70)
\letvertex H=(496,26)\letvertex I=(420,10)\letvertex L=(344,26)\letvertex M=(280,70)
\letvertex N=(236,134)\letvertex O=(220,210)\letvertex P=(236,286)\letvertex Q=(280,350)
\letvertex R=(344,394)

\letvertex a=(420,360)\letvertex b=(477,348)\letvertex c=(525,315)
\letvertex d=(558,267)\letvertex e=(570,210)\letvertex f=(558,153)\letvertex g=(525,105)
\letvertex h=(477,72)\letvertex i=(420,60)\letvertex l=(363,72)\letvertex m=(315,105)
\letvertex n=(282,153)\letvertex o=(270,210)\letvertex p=(282,267)\letvertex q=(315,315)
\letvertex r=(363,348)

\drawvertex(A){$\bullet$}\drawvertex(B){$\bullet$}
\drawvertex(C){$\bullet$}\drawvertex(D){$\bullet$}
\drawvertex(E){$\bullet$}\drawvertex(F){$\bullet$}
\drawvertex(G){$\bullet$}\drawvertex(I){$\bullet$}\drawvertex(M){$\bullet$}\drawvertex(N){$\bullet$}
\drawvertex(H){$\bullet$}\drawvertex(L){$\bullet$}\drawvertex(O){$\bullet$}\drawvertex(P){$\bullet$}
\drawvertex(Q){$\bullet$}\drawvertex(R){$\bullet$}

\drawvertex(a){$\bullet$}\drawvertex(b){$\bullet$}
\drawvertex(c){$\bullet$}\drawvertex(d){$\bullet$}
\drawvertex(e){$\bullet$}\drawvertex(f){$\bullet$}
\drawvertex(g){$\bullet$}\drawvertex(i){$\bullet$}\drawvertex(m){$\bullet$}\drawvertex(n){$\bullet$}
\drawvertex(h){$\bullet$}\drawvertex(l){$\bullet$}\drawvertex(o){$\bullet$}\drawvertex(p){$\bullet$}
\drawvertex(q){$\bullet$}\drawvertex(r){$\bullet$}

\drawundirectededge(A,B){}\drawundirectededge(B,C){}\drawundirectededge(C,D){}\drawundirectededge(D,E){}
\drawundirectededge(E,F){}\drawundirectededge(F,G){}\drawundirectededge(G,H){}\drawundirectededge(H,I){}
\drawundirectededge(I,L){}\drawundirectededge(L,M){}\drawundirectededge(M,N){}
\drawundirectededge(N,O){}\drawundirectededge(O,P){}\drawundirectededge(P,Q){}\drawundirectededge(Q,R){}
\drawundirectededge(R,A){}

\drawundirectededge(a,b){}\drawundirectededge(b,c){}\drawundirectededge(c,d){}\drawundirectededge(d,e){}
\drawundirectededge(e,f){}\drawundirectededge(f,g){}\drawundirectededge(g,h){}\drawundirectededge(h,i){}
\drawundirectededge(i,l){}\drawundirectededge(l,m){}\drawundirectededge(m,n){}
\drawundirectededge(n,o){}\drawundirectededge(o,p){}\drawundirectededge(p,q){}\drawundirectededge(q,r){}
\drawundirectededge(r,a){}

\drawundirectededge(A,b){}\drawundirectededge(b,C){}\drawundirectededge(C,d){}\drawundirectededge(d,E){}
\drawundirectededge(E,f){}\drawundirectededge(f,G){}
\drawundirectededge(G,h){}\drawundirectededge(h,I){}
\drawundirectededge(I,l){}\drawundirectededge(l,M){}
\drawundirectededge(M,n){}\drawundirectededge(n,O){}\drawundirectededge(O,p){}
\drawundirectededge(p,Q){}\drawundirectededge(Q,r){}\drawundirectededge(r,A){}

\drawundirectededge(a,B){}\drawundirectededge(B,c){}\drawundirectededge(c,D){}\drawundirectededge(D,e){}
\drawundirectededge(e,F){}\drawundirectededge(F,g){}
\drawundirectededge(g,H){}\drawundirectededge(H,i){}
\drawundirectededge(i,L){}\drawundirectededge(L,m){}
\drawundirectededge(m,N){}\drawundirectededge(N,o){}\drawundirectededge(o,P){}
\drawundirectededge(P,q){}\drawundirectededge(q,R){}\drawundirectededge(R,a){}
\end{picture}
\end{center}
The following proposition shows that the connected components of the zig-zag
product $G\zig C_4$ are isomorphic to double cycle graphs.

\begin{prop}\label{propprop}
Let $G$ be a $4$-regular graph and let $\{v=v_0, v_1,\ldots,
v_n=v\}$ be a path spanning a parity block $P$ of the parity block decomposition of $G$. Then the
corresponding connected component $S$ in $G\zig C_4$ is isomorphic
to the double cycle graph $DC_n$. More precisely, if
$\rot_G(v_i,h_i)=(v_{i+1},k_i)$, then $(v_i,h_i\pm 1)$ and
$(v_{i\pm 1}, h_{i\pm 1} \pm 1)$ form two adjacent papillon
graphs in $DC_n$.
\end{prop}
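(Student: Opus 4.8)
The plan is to read the papillon structure of the component $S$ directly off the spanning path and then recognise it as a double cycle. Since $C_4$ is $2$-regular, the two neighbours of any vertex $h$ in $C_4$ are $h\pm 1$ (taken cyclically), and this pair depends only on the parity of $h$: it equals $\{2,4\}$ when $h$ is odd and $\{1,3\}$ when $h$ is even. Applying Lemma \ref{papillonlemma} to the edge of $P$ described by $\rot_G(v_i,h_i)=(v_{i+1},k_i)$, I would record that it produces in $S$ a papillon $P_i$ whose two parts are $T_i=\{(v_i,h_i-1),(v_i,h_i+1)\}$, lying in the cloud $v_i$, and $B_i=\{(v_{i+1},k_i-1),(v_{i+1},k_i+1)\}$, lying in the cloud $v_{i+1}$. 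There are $n$ such papillons, one for each edge of the Eulerian circuit $P$.

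First I would show that consecutive papillons share an entire part. The spanning path obeys the parity condition $k_i\equiv h_{i+1}\pmod 2$, together with the closing condition $k_{n-1}\equiv h_0\pmod 2$ coming from $v_n=v_0$. Because in $C_4$ the neighbour-pair of a vertex is determined by its parity alone, this forces $B_i=T_{i+1}$ for every $i$ (indices modulo $n$). Thus the papillons $P_0,\dots,P_{n-1}$ form a closed cyclic chain in which the common part $Q_i:=T_i=B_{i-1}$ belongs to exactly the two papillons $P_{i-1}$ and $P_i$. This is precisely the \emph{more precisely} clause of the statement: the pair $(v_i,h_i\pm 1)$ sits in the two adjacent papillons determined by $(v_{i-1},h_{i-1}\pm 1)$ and $(v_{i+1},h_{i+1}\pm 1)$.

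Next I would write the isomorphism explicitly. Setting $x_i=(v_i,h_i-1)$ and $y_i=(v_i,h_i+1)$, so that $Q_i=\{x_i,y_i\}$, the four edges of the papillon $P_i$, viewed as a $K_{2,2}$ on the parts $Q_i$ and $Q_{i+1}$, are $x_ix_{i+1}$, $y_iy_{i+1}$, $x_iy_{i+1}$ and $y_ix_{i+1}$. Taking the union over $i=0,\dots,n-1$ produces two $n$-cycles $x_0x_1\cdots x_{n-1}x_0$ and $y_0y_1\cdots y_{n-1}y_0$ together with the cross-edges $x_iy_{i+1}$ and $y_ix_{i+1}$; equivalently, the resulting graph is $4$-regular on $2n$ vertices in which every vertex lies in exactly two papillons, i.e.\ it realises the definition of $DC_n$. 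Since $S$ is exactly the union of the $n$ papillons $P_i$ (by Lemma \ref{papillonlemma} and Theorem \ref{decomposition}), every vertex of $S$ is some $x_i$ or $y_i$, so the map sending $x_i$ and $y_i$ to the outer and inner vertex of index $i$ in $DC_n$ is surjective, and it clearly transports the above edge relations; it only remains to verify injectivity.

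The hard part will be exactly this distinctness bookkeeping, since odden vertices are traversed twice by the circuit. Here I would invoke the visit-count recorded after Corollary \ref{corollario4}: a non-odden vertex of $P$ is visited once and contributes a single parity-pair $Q_i$ to $S$, while an odden vertex is visited twice, the two visits using edges of opposite parity and hence contributing its two \emph{disjoint} parity-pairs; vertices in different clouds are automatically distinct. Consequently the pairs $Q_0,\dots,Q_{n-1}$ are pairwise disjoint, so the $x_i,y_i$ are $2n$ distinct vertices and the map is a bijection preserving adjacency. This gives $S\simeq DC_n$.
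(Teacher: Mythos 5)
Your proposal is correct and follows essentially the same route as the paper's proof: both apply Lemma \ref{papillonlemma} to each edge of the spanning circuit and use the parity condition $k_{i-1}\equiv h_i \pmod 2$, together with the fact that in $C_4$ the neighbour-pair of a label depends only on its parity, to glue consecutive papillons along the shared pair $\{(v_i,h_i\pm 1)\}=\{(v_i,k_{i-1}\pm 1)\}$. The only difference is one of explicitness: where the paper closes by invoking Corollary \ref{corollario4}, you spell out the bijection with $DC_n$ and the distinctness bookkeeping for odden vertices (two visits of opposite parity giving disjoint pairs), which is a sound elaboration of the same argument rather than a different proof.
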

\begin{proof}
Two consecutive edges $\{v_{i-1},v_i\}$ and $\{v_i,v_{i+1}\}$ in
the path spanning $P$
\begin{center}
\begin{picture}(200,20)
\letvertex A=(20,10)\letvertex B=(100,10)\letvertex C=(180,10)

\drawundirectededge(A,B){}\drawundirectededge(B,C){}

\drawvertex(A){$\bullet$}\drawvertex(B){$\bullet$}
\drawvertex(C){$\bullet$}

\put(15,-2){$v_{i-1}$} \put(95,-2){$v_i$} \put(175,-2){$v_{i+1}$}

\put(24,15){$h_{i-1}$}\put(72,15){$k_{i-1}$} \put(105,15){$h_i$}
\put(163,15){$k_i$}
\end{picture}
\end{center}
produce the following papillon subgraphs, respectively:
\begin{center}
\begin{picture}(300,70)
\letvertex A=(30,55)\letvertex B=(30,15)\letvertex C=(110,15)\letvertex D=(110,55)
\letvertex E=(190,55)\letvertex F=(190,15)\letvertex G=(270,15)\letvertex H=(270,55)

\drawundirectededge(A,C){}\drawundirectededge(A,D){}\drawundirectededge(B,C){}\drawundirectededge(B,D){}
\drawundirectededge(E,G){}\drawundirectededge(E,H){}\drawundirectededge(F,G){}\drawundirectededge(F,H){}

\drawvertex(A){$\bullet$}\drawvertex(B){$\bullet$}
\drawvertex(C){$\bullet$}\drawvertex(D){$\bullet$}\drawvertex(E){$\bullet$}
\drawvertex(F){$\bullet$}\drawvertex(G){$\bullet$}\drawvertex(H){$\bullet$}

{\scriptsize \put(0,63){$(v_{i-1}, h_{i-1}+1)$}

\put(0,3){$(v_{i-1}, h_{i-1}-1)$}

\put(95,63){$(v_{i}, k_{i-1}+1)$}

\put(95,3){$(v_{i}, k_{i-1}-1)$}

\put(170,63){$(v_{i}, h_{i}+1)$}

\put(170, 3){$(v_{i}, h_{i}-1)$}

\put(258,63){$(v_{i+1}, k_{i}+1)$}

\put(258,3){$(v_{i+1}, k_{i}-1)$}}
\end{picture}
\end{center}
Notice that the sets $\{h_i\pm 1\}$ and $\{k_{i-1}\pm 1\}$
coincide, since the spanning path must satisfy the parity properties. Therefore, we can identify the pair of vertices $(v_i,
k_{i-1}\pm 1)$ with the pair of vertices $(v_i, h_i\pm 1)$,
getting in $G\zig C_4$ the following subgraph obtained by gluing together two single papillon subgraphs.
\begin{center}
\begin{picture}(300,60)
\letvertex A=(70,55)\letvertex B=(70,15)\letvertex C=(150,15)\letvertex D=(150,55)
\letvertex G=(230,15)\letvertex H=(230,55)

\drawundirectededge(A,C){}\drawundirectededge(A,D){}\drawundirectededge(B,C){}\drawundirectededge(B,D){}
\drawundirectededge(D,G){}\drawundirectededge(D,H){}\drawundirectededge(C,G){}\drawundirectededge(C,H){}

\drawvertex(A){$\bullet$}\drawvertex(B){$\bullet$}
\drawvertex(C){$\bullet$}\drawvertex(D){$\bullet$}\drawvertex(G){$\bullet$}\drawvertex(H){$\bullet$}
\end{picture}
\end{center}
Since the path spanning $P$ contains $n$ vertices, the assertion
follows from Corollary \ref{corollario4}.
\end{proof}
\begin{example}\label{example6}   \rm
In this example, we consider the graph $G=K_5$ endowed with a bi-labelling
$\mathcal{L}$ such that $K_5=P_1\cup P_2$.
The corresponding connected components $S_1$ and $S_2$ of $K_5\zig C_4$ are given as well.
\begin{center}
\begin{picture}(400,195)\unitlength=0.24mm
\letvertex A=(120,185)\letvertex B=(40,110)\letvertex C=(70,15)
\letvertex D=(170,15)\letvertex E=(200,110)

\letvertex F=(350,160)\letvertex G=(350,60)\letvertex H=(450,60)\letvertex I=(450,160)

\drawundirectededge(A,B){}\drawundirectededge(B,C){}\drawundirectededge(C,D){}\drawundirectededge(D,E){}\drawundirectededge(A,E){}
\drawundirectededge(A,C){}\drawundirectededge(E,C){}\drawundirectededge(E,B){}\drawundirectededge(D,B){}\drawundirectededge(A,D){}

\drawundirectededge(F,G){}\drawundirectededge(G,H){}\drawundirectededge(H,I){}\drawundirectededge(I,F){}

\drawvertex(A){$\bullet$}\drawvertex(B){$\bullet$}
\drawvertex(C){$\bullet$}\drawvertex(D){$\bullet$}\drawvertex(E){$\bullet$}\drawvertex(F){$\bullet$}
\drawvertex(G){$\bullet$}\drawvertex(H){$\bullet$}\drawvertex(I){$\bullet$}

\put(-20,105){$K_5$} \put(500,105){$C_4$}

\put(115,191){$0$} \put(28,103){$1$} \put(65,-1){$2$}
\put(165,-1){$3$}\put(205,103){$4$}

\put(345,165){$1$} \put(347,44){$2$} \put(447,44){$3$}
\put(448,165){$4$}

\put(83,160){$2$} \put(100,150){$3$} \put(133,151){$1$}
\put(150,158){$4$}

\put(55,133){$1$}\put(63,113){$2$}\put(65,92){$3$}
\put(40,74){$4$}

\put(50,40){$2$}\put(80,40){$1$}\put(95,22){$3$}\put(93,2){$4$}

\put(140,2){$3$}\put(138,21){$2$}\put(152,40){$4$}\put(185,40){$1$}

\put(192,72){$3$}\put(166,92){$2$}\put(170,113){$1$}\put(179,132){$4$}

\end{picture}
\end{center}
\begin{center}
\begin{picture}(450,330)\unitlength=0.24mm
\letvertex A=(120,285)\letvertex B=(40,210)\letvertex C=(70,115)
\letvertex D=(170,115)\letvertex E=(200,210)

\put(110,20){$P_1$} \put(360,20){$S_1\simeq DC_6$}

\drawundirectededge(A,B){}\drawundirectededge(A,E){}
\drawundirectededge(A,C){}\drawundirectededge(E,C){}\drawundirectededge(D,B){}\drawundirectededge(A,D){}

\drawvertex(A){$\bullet$}\drawvertex(B){$\bullet$}
\drawvertex(C){$\bullet$}\drawvertex(D){$\bullet$}\drawvertex(E){$\bullet$}

\put(115,290){$0$} \put(28,203){$1$} \put(65,99){$2$}
\put(165,99){$3$}\put(205,203){$4$}

\put(85,260){$2$} \put(100,250){$3$} \put(133,250){$1$}
\put(150,258){$4$}

\put(55,232){$1$}\put(65,192){$3$}\put(80,140){$1$}\put(95,123){$3$}\put(137,123){$2$}\put(152,140){$4$}\put(166,192){$2$}\put(179,232){$4$}

                       \letvertex AA=(400,320)\letvertex BB=(296,260)\letvertex CC=(296,140)
\letvertex DD=(400,80)\letvertex EE=(504,140)\letvertex FF=(504,260)\letvertex a=(400,260)\letvertex b=(348,230)
\letvertex c=(348,170)\letvertex d=(400,140)\letvertex e=(452,170)\letvertex f=(452,230)

\drawundirectededge(AA,BB){}\drawundirectededge(BB,CC){}
\drawundirectededge(CC,DD){}\drawundirectededge(DD,EE){}\drawundirectededge(EE,FF){}\drawundirectededge(AA,FF){}

\drawundirectededge(a,b){}\drawundirectededge(b,c){}
\drawundirectededge(c,d){}\drawundirectededge(d,e){}\drawundirectededge(e,f){}\drawundirectededge(f,a){}

\drawundirectededge(AA,b){}\drawundirectededge(b,CC){}
\drawundirectededge(CC,d){}\drawundirectededge(d,EE){}\drawundirectededge(EE,f){}\drawundirectededge(AA,f){}

\drawundirectededge(a,BB){}\drawundirectededge(BB,c){}
\drawundirectededge(c,DD){}\drawundirectededge(DD,e){}\drawundirectededge(e,FF){}\drawundirectededge(a,FF){}

\drawvertex(AA){$\bullet$}\drawvertex(BB){$\bullet$}
\drawvertex(CC){$\bullet$}\drawvertex(DD){$\bullet$}\drawvertex(EE){$\bullet$}
\drawvertex(FF){$\bullet$}\drawvertex(a){$\bullet$}
\drawvertex(b){$\bullet$}\drawvertex(c){$\bullet$}\drawvertex(d){$\bullet$}\drawvertex(e){$\bullet$}\drawvertex(f){$\bullet$}
   {\footnotesize
\put(385,328){$(0,1)$}\put(275,269){$(4,1)$}
\put(278,120){$(2,4)$}\put(385,65){$(0,2)$}\put(492,120){$(3,3)$}\put(505,267){$(1,2)$}

\put(385,240){$(0,3)$}\put(350,215){$(4,3)$}
\put(350,175){$(2,2)$}\put(385,153){$(0,4)$}\put(418,175){$(3,1)$}\put(418,215){$(1,4)$}      }
\end{picture}
\end{center}
\begin{center}
\begin{picture}(470,150)\unitlength=0.25mm
\letvertex B=(40,150)\letvertex C=(70,55)
\letvertex D=(170,55)\letvertex E=(200,150)
\put(110,-5){$P_2$} \put(360,-5){$S_2\simeq DC_4$}

\drawundirectededge(B,C){}\drawundirectededge(C,D){}\drawundirectededge(D,E){}\drawundirectededge(B,E){}

\drawvertex(B){$\bullet$}\drawvertex(C){$\bullet$}\drawvertex(D){$\bullet$}\drawvertex(E){$\bullet$}

\put(28,143){$1$}\put(65,38){$2$}
\put(165,38){$3$}\put(205,143){$4$}

\put(63,153){$2$}\put(40,115){$4$}

\put(50,80){$2$}\put(93,41){$4$}

\put(140,41){$3$}\put(185,80){$1$}

\put(192,115){$3$}\put(170,153){$1$}

\letvertex AA=(340,160)\letvertex BB=(340,40)\letvertex CC=(460,40)
\letvertex DD=(460,160)\letvertex a=(370,130)\letvertex b=(370,70)
\letvertex c=(430,70)\letvertex d=(430,130)

\drawundirectededge(AA,BB){}\drawundirectededge(BB,CC){}
\drawundirectededge(CC,DD){}\drawundirectededge(DD,AA){}

\drawundirectededge(a,b){}\drawundirectededge(b,c){}
\drawundirectededge(c,d){}\drawundirectededge(d,a){}

\drawundirectededge(AA,b){}\drawundirectededge(b,CC){}
\drawundirectededge(CC,d){}\drawundirectededge(d,AA){}

\drawundirectededge(a,BB){}\drawundirectededge(BB,c){}
\drawundirectededge(c,DD){}\drawundirectededge(DD,a){}

\drawvertex(AA){$\bullet$}\drawvertex(BB){$\bullet$}
\drawvertex(CC){$\bullet$}\drawvertex(DD){$\bullet$}\drawvertex(a){$\bullet$}
\drawvertex(b){$\bullet$}\drawvertex(c){$\bullet$}\drawvertex(d){$\bullet$}

 {\footnotesize
\put(324,168){$(1,1)$}\put(324,25){$(2,3)$}
\put(448,25){$(3,4)$}\put(448,168){$(4,2)$}     }
 {\tiny
\put(373,115){$(1,3)$}\put(373,77){$(2,1)$}
\put(403,77){$(3,2)$}\put(403,115){$(4,4)$} }

\end{picture}
\end{center}
\end{example}


\section{Schreier graphs: an application}\label{Basilicasection}

Let $X=\{0,1\}$ be a binary alphabet. Denote by $X^0$ the set consisting of the empty word, and by $X^n=\{w=x_1\ldots
x_n : x_i \in X\}$ the set of words of length $n$ over the
alphabet $X$, for each $n\geq 1$. Put $X^{\ast}=\bigcup_{n\geq 0} X^n$ and let
$X^{\infty} = \{w=x_1x_2\ldots \}$ be the set of infinite words
over $X$.

Consider the so-called Basilica group $B$ acting on the set $X^\ast \cup
X^{\infty}$, which is the group generated by the following three-state automaton:
\begin{center}
\begin{picture}(300,145)
\thicklines \setvertexdiam{20} \setprofcurve{15}\setloopdiam{20}
\letstate A=(75,140)
\letstate C=(225,90)
\letstate B=(75,40)

\drawstate(A){$a$} \drawstate(B){$b$} \drawstate(C){$id$}

\drawcurvededge(A,C){$1|1$}
\setprofcurve{-20}\drawcurvededge[r](B,C){$1|0$}
\drawloop[r](C){$0|0,1|1$} \drawcurvededge[r](A,B){$0|0$}
\drawcurvededge[r](B,A){$0|1$}
\end{picture}
\end{center}
The states $a$ and $b$ of the automaton are the generators of the
group, whereas $id$ represents the identity action: therefore, it
can be read from the automaton that the action of $a$ and $b$ is
given by
$$
a(0w) = 0b(w) \qquad \qquad \qquad a(1w) = 1w
$$
$$
b(0w) = 1a(w) \qquad \qquad \qquad b(1w) = 0w,
$$
for every $w\in X^\ast\cup X^\infty$. In particular, $B$ maps
$X^n$ into $X^n$, for each $n \geq 1$, and $X^\infty$ into
$X^\infty$. Furthermore, it is easy to check that the action of
$B$ on $X^n$ is transitive for every $n$.

The Basilica group belongs to the important class of self-similar groups
and was introduced by R. Grigorchuk and A. \.{Z}uk \cite{grizuk}.
It is a remarkable fact due to Nekrashevych \cite{nekrashevych}
that it can be described as the iterated monodromy group
$IMG(z^2-1)$ of the complex polynomial $z^2-1$. Moreover, $B$ is
the first example of an amenable group (a highly non--trivial and
deep result of Bartholdi and Vir\'ag \cite{amen}) not belonging to
the class $SG$ of subexponentially amenable groups, which is the
smallest class containing all groups of subexponential growth and
closed after taking subgroups, quotients, extensions and direct
unions. In \cite{ADM}, the action of the Basilica group on the set
$X^\ast\cup X^\infty$ is studied from the point of view of
Gelfand pairs theory.

For each $n\geq 1$, let $\Gamma_n$ be the (orbital) \textit{Schreier
graph} associated with the action of $B$ on $X^n$. By definition, the vertices
of $\Gamma_n$ are the elements of $X^n$, and two vertices $u,v$
are connected by an edge labelled by $s$ close to $u$ and by
$s^{-1}$ close to $v$ if $s(u)=v$ (so that $s^{-1}(v)=u)$. Here,
we are assuming $s\in \{a,b\}$. Observe that $\Gamma_n$ is a
connected graph on $2^n$ vertices, since $B$ acts transitively on
each level; moreover, there are $4$ edges issuing from every
vertex, so that $\Gamma_n$ is a $4$-regular graph, and the labels near $v$ are given by $a^{\pm 1}, b^{\pm 1}$, for every $v\in X^n$.

Similarly, one can consider the action of $B$ on $X^\infty$ and
define the (orbital) infinite Schreier graph $\Gamma_\xi$, describing the orbit $B(\xi)$ of the element $\xi\in X^\infty$
under the action of the generators of $B$. Note
that, even though the group acts transitively on $X^n$, for each $n\geq 1$, there exist uncountably many orbits in $X^\infty$ under
the action of $B$. The infinite Schreier graph $\Gamma_{\xi}$ can
be approximated (as a rooted graph) by finite Schreier graphs
$\Gamma_n$, as $n\rightarrow\infty$, in the compact space of
rooted graphs of uniformly bounded degree endowed with {\it
pointed Gromov-Hausdorff convergence} \cite[Chapter 3]{Grom}: if
$\xi=x_1x_2\ldots \in X^{\infty}$ and $\xi_n=x_1\ldots x_n$ is its
prefix of length $n$, then one has
$$
\lim_{n\to \infty}(\Gamma_n,\xi_n)=(\Gamma_\xi,\xi),
$$
where we denote by $(\Gamma_n,\xi_n)$ the graph $\Gamma_n$
regarded as a graph rooted at the vertex $\xi_n$, and by $(\Gamma_\xi, \xi)$
the graph $\Gamma_\xi$ regarded as a graph rooted at the vertex $\xi$.

In \cite{JMD2}, finite and infinite Schreier graphs of the
Basilica group are investigated. Precise substitutional rules
allowing to construct recursively the sequence of finite Schreier
graphs are provided, and a topological (up to isomorphism of rooted
graphs) classification of the infinite Schreier graphs is given
there. In the following pictures, the graphs $\Gamma_1, \Gamma_2,
\Gamma_3$ are depicted. \unitlength=0,42mm
\setloopdiam{6}\setprofcurve{4}
\begin{center}
\begin{picture}(300,20)
\letvertex A=(30,15)\letvertex B=(70,15)\letvertex C=(150,15)\letvertex D=(190,15)
\letvertex E=(230,15)\letvertex F=(270,15)

\put(-5,12){$\Gamma_1$} \put(295,12){$\Gamma_2$}

\put(27,5){0} \put(68,5){1}\put(145,5){10}
\put(185,5){00}\put(225,5){01}\put(265,5){11}
\drawvertex(A){$\bullet$}\drawvertex(B){$\bullet$}
\drawvertex(C){$\bullet$}\drawvertex(D){$\bullet$}
\drawvertex(E){$\bullet$}\drawvertex(F){$\bullet$}

\scriptsize{ \put(20,20){$a^{-1}$} \put(20,5){$a$}

\put(75,20){$a^{-1}$} \put(75,5){$a$}

\put(277,20){$a^{-1}$} \put(277,5){$a$}

\put(140,20){$a^{-1}$} \put(140,5){$a$}

\drawundirectedcurvededge(A,B){$\ \ \ b\ \ \ \ \ \ b^{-1}$}
\drawundirectedcurvededge(B,A){$\ b^{-1} \ \   b $}
\drawundirectedcurvededge(C,D){$\ \ \ b\ \ \ \ \ \ b^{-1}$}
\drawundirectedcurvededge(D,C){$\ b^{-1} \ \  b $}
\drawundirectedcurvededge(D,E){$\ \ \ a\ \ \ \ \ \ a^{-1}$}
\drawundirectedcurvededge(E,D){$\ a^{-1} \ \   a $}
\drawundirectedcurvededge(E,F){$\ \ b\ \ \ \ \ \ b^{-1}$}
\drawundirectedcurvededge(F,E){$\ b^{-1} \ \ \  b $}}
\drawundirectedloop[l](A){}\drawundirectedloop[r](B){}\drawundirectedloop[l](C){}
\drawundirectedloop[r](F){}
\end{picture}
\end{center}
\begin{center}
\begin{picture}(300,69)  \unitlength=0,43mm
\letvertex A=(50,30)\letvertex B=(90,30)\letvertex C=(130,30)\letvertex D=(150,50)
\letvertex E=(150,10)\letvertex F=(170,30)\letvertex G=(210,30)\letvertex H=(250,30)

\drawvertex(A){$\bullet$}\drawvertex(B){$\bullet$}
\drawvertex(C){$\bullet$}\drawvertex(D){$\bullet$}
\drawvertex(E){$\bullet$}\drawvertex(F){$\bullet$}
\drawvertex(G){$\bullet$}\drawvertex(H){$\bullet$}
\put(0,27){$\Gamma_3$}

{\scriptsize \drawundirectededge(C,D){} \drawundirectededge(E,C){}
\drawundirectededge(F,E){} \drawundirectededge(D,F){}

\put(139,42){$b$} \put(157,45){$b\!^{-1}$}

\put(129,35){$b\!^{-\!1}$} \put(165,36){$b$}

\put(137,24){$b$} \put(158,24){$b\!^{-\!1}$}

\put(143,18){$b\!^{-\!1}$} \put(152,15){$b$}

\put(39,35){$a^{-1}$} \put(39,21){$a$}

 \put(258,35){$a^{-1}$}
\put(258,21){$a$}

 \put(139,58){$a\!^{-1}$}
\put(155,58){$a$}

 \put(135,-3){$a\!^{-1}$}
\put(155,-3){$a$}

\put(46,20){110} \put(84,20){010}\put(122,19){000}
\put(134,49){100}\put(134,7){101}\put(166,19){001}
\put(205,20){011}\put(244,19){111}

\drawundirectedcurvededge(A,B){$\ \ \ \ b\ \ \ \ \  b^{-1}$}
\drawundirectedcurvededge(B,A){$\ b^{-1} \   b $}
\drawundirectedcurvededge(B,C){$\ \ \ \ a\ \ \  a^{-1}$}
\drawundirectedcurvededge(C,B){$ a^{-1}   a $}
\drawundirectedcurvededge(F,G){$\ \ \ \ a\ \ \ \ \ \ a^{-1}$}
\drawundirectedcurvededge(G,F){$\ \ a^{-1} \   a $}
\drawundirectedcurvededge(G,H){$\ \ \ \ b\ \ \ \ \ \ b^{-1}$}
\drawundirectedcurvededge(H,G){$\ b^{-1} \ \   b $}

\drawundirectedloop[l](A){}\drawundirectedloop(D){}\drawundirectedloop[b](E){}
\drawundirectedloop[r](H){}}
\end{picture}
\end{center}
For instance, the fact that the edge connecting $000$ and $101$ in
$\Gamma_3$ is labelled by $b$ near $000$ and by $b^{-1}$ near
$101$ means that $b(000) = 101$, so that $b^{-1}(101) = 000$. The
graphs $\Gamma_4$ and $\Gamma_5$ are represented below (the
bi-labelling of the edges and the words corresponding to each vertex
are omitted in order to simplify the picture).
\begin{center}
\begin{picture}(200,120)\unitlength=0.27mm
\put(-35,67){$\Gamma_4$}

\letvertex A=(10,70)
\letvertex B=(50,70)\letvertex C=(90,70)\letvertex D=(110,90)\letvertex E=(110,50)
\letvertex F=(130,70)\letvertex G=(150,90)\letvertex H=(150,50)\letvertex I=(170,70)
\letvertex J=(150,130)\letvertex K=(150,10)
\letvertex L=(190,90)
\letvertex M=(190,50)\letvertex N=(210,70)
\letvertex O=(250,70)\letvertex P=(290,70)


\drawvertex(A){$\bullet$}\drawvertex(B){$\bullet$}
\drawvertex(C){$\bullet$}\drawvertex(D){$\bullet$}
\drawvertex(E){$\bullet$}\drawvertex(F){$\bullet$}
\drawvertex(G){$\bullet$}\drawvertex(H){$\bullet$}
\drawvertex(I){$\bullet$}\drawvertex(L){$\bullet$}
\drawvertex(M){$\bullet$}\drawvertex(N){$\bullet$}
\drawvertex(O){$\bullet$}\drawvertex(P){$\bullet$}
\drawvertex(J){$\bullet$}\drawvertex(K){$\bullet$}

\drawundirectedcurvededge(A,B){}\drawundirectedcurvededge(B,A){}
\drawundirectedcurvededge(B,C){}\drawundirectedcurvededge(C,B){}
\drawundirectededge(C,D){} \drawundirectededge(D,F){}
\drawundirectededge(F,E){}
\drawundirectededge(E,C){}\drawundirectededge(F,G){}
\drawundirectededge(G,I){} \drawundirectededge(I,H){}
\drawundirectededge(H,F){} \drawundirectededge(I,L){}
\drawundirectededge(L,N){} \drawundirectededge(N,M){}
\drawundirectededge(M,I){}
\drawundirectedcurvededge(G,J){}\drawundirectedcurvededge(J,G){}
\drawundirectedcurvededge(H,K){}\drawundirectedcurvededge(K,H){}
\drawundirectedcurvededge(N,O){}\drawundirectedcurvededge(O,N){}
\drawundirectedcurvededge(O,P){}\drawundirectedcurvededge(P,O){}
\drawundirectedloop[l](A){}\drawundirectedloop(D){}\drawundirectedloop[b](E){}
\drawundirectedloop(J){}\drawundirectedloop[b](K){}\drawundirectedloop(L){}\drawundirectedloop[b](M){}
\drawundirectedloop[r](P){}
\end{picture}
\end{center} \begin{center}
\begin{picture}(235,150)\unitlength=0.24mm
\put(20,60){$\Gamma_5$}
\letvertex A=(0,110)\letvertex B=(40,110)\letvertex C=(80,110)\letvertex D=(100,130)
\letvertex E=(100,90)\letvertex F=(120,110)\letvertex G=(140,130)\letvertex H=(140,160)
\letvertex I=(160,110)\letvertex L=(140,90)\letvertex M=(140,60)\letvertex N=(170,140)
\letvertex O=(200,150)\letvertex R=(230,140)\letvertex S=(240,110)\letvertex T=(230,80)
\letvertex U=(200,70)\letvertex V=(170,80)\letvertex P=(200,180)\letvertex Q=(200,210)
\letvertex Z=(200,40)\letvertex J=(200,10)\letvertex K=(260,130)\letvertex X=(280,110)
\letvertex W=(260,90)\letvertex g=(260,160)\letvertex h=(260,60)\letvertex c=(300,130)
\letvertex Y=(300,90)\letvertex d=(320,110)\letvertex e=(360,110)\letvertex f=(400,110)


\drawvertex(A){$\bullet$}\drawvertex(B){$\bullet$}
\drawvertex(C){$\bullet$}\drawvertex(D){$\bullet$}
\drawvertex(E){$\bullet$}\drawvertex(F){$\bullet$}
\drawvertex(G){$\bullet$}\drawvertex(H){$\bullet$}
\drawvertex(I){$\bullet$}\drawvertex(L){$\bullet$}
\drawvertex(M){$\bullet$}\drawvertex(N){$\bullet$}
\drawvertex(O){$\bullet$}\drawvertex(P){$\bullet$}
\drawvertex(J){$\bullet$}\drawvertex(K){$\bullet$}
\drawvertex(Q){$\bullet$}\drawvertex(R){$\bullet$}
\drawvertex(S){$\bullet$}\drawvertex(T){$\bullet$}
\drawvertex(U){$\bullet$}\drawvertex(V){$\bullet$}
\drawvertex(W){$\bullet$}\drawvertex(X){$\bullet$}
\drawvertex(Y){$\bullet$}\drawvertex(Z){$\bullet$}
\drawvertex(g){$\bullet$}\drawvertex(h){$\bullet$}
\drawvertex(c){$\bullet$}\drawvertex(f){$\bullet$}
\drawvertex(d){$\bullet$}\drawvertex(e){$\bullet$}

\drawundirectedcurvededge(A,B){}\drawundirectedcurvededge(B,A){}
\drawundirectedcurvededge(B,C){}\drawundirectedcurvededge(C,B){}
\drawundirectededge(C,D){} \drawundirectededge(D,F){}
\drawundirectededge(F,E){} \drawundirectededge(E,C){}

\drawundirectededge(F,G){} \drawundirectededge(G,I){}
\drawundirectededge(I,L){} \drawundirectededge(L,F){}

\drawundirectedcurvededge(G,H){}\drawundirectedcurvededge(H,G){}
\drawundirectedcurvededge(L,M){}\drawundirectedcurvededge(M,L){}

\drawundirectededge(I,N){} \drawundirectededge(N,O){}
\drawundirectededge(O,R){} \drawundirectededge(R,S){}
\drawundirectededge(S,T){} \drawundirectededge(T,U){}
\drawundirectededge(U,V){} \drawundirectededge(V,I){}

\drawundirectedcurvededge(O,P){}\drawundirectedcurvededge(P,O){}
\drawundirectedcurvededge(Q,P){}\drawundirectedcurvededge(P,Q){}

\drawundirectedcurvededge(U,Z){}\drawundirectedcurvededge(Z,U){}
\drawundirectedcurvededge(Z,J){}\drawundirectedcurvededge(J,Z){}

\drawundirectededge(S,K){} \drawundirectededge(K,X){}
\drawundirectededge(X,W){} \drawundirectededge(W,S){}
\drawundirectededge(X,c){} \drawundirectededge(c,d){}
\drawundirectededge(d,Y){} \drawundirectededge(Y,X){}

\drawundirectedcurvededge(d,e){}\drawundirectedcurvededge(e,d){}
\drawundirectedcurvededge(e,f){}\drawundirectedcurvededge(f,e){}
\drawundirectedcurvededge(K,g){}\drawundirectedcurvededge(g,K){}
\drawundirectedcurvededge(W,h){}\drawundirectedcurvededge(h,W){}

\drawundirectedloop[l](A){}\drawundirectedloop(D){}\drawundirectedloop[b](E){}
\drawundirectedloop(H){}\drawundirectedloop[b](M){}\drawundirectedloop(Q){}\drawundirectedloop[b](J){}
\drawundirectedloop(g){}\drawundirectedloop[b](h){}\drawundirectedloop(c){}\drawundirectedloop[b](Y){}
\drawundirectedloop[r](f){}\drawundirectedloop[b](V){}\drawundirectedloop(N){}\drawundirectedloop[b](T){}
\drawundirectedloop(R){}
\end{picture}
\end{center}

\unitlength=0,3mm\setloopdiam{17}\setprofcurve{10}

As the graph $\Gamma_n$ is a $4$-regular graph, for each $n\geq
1$, it is natural to construct the sequence $\{\Gamma_n \zig
C_4\}_{n\geq 1}$. We label the graph $C_4$ as follows:
\begin{center}
\begin{picture}(400,120)
\letvertex A=(150,110)\letvertex B=(150,10)\letvertex C=(250,10)\letvertex D=(250,110)

\drawvertex(A){$\bullet$}\drawvertex(B){$\bullet$}
\drawvertex(C){$\bullet$}\drawvertex(D){$\bullet$}

\drawundirectededge(A,B){}\drawundirectededge(B,C){}
\drawundirectededge(C,D){}\drawundirectededge(D,A){}

\put(145,115){$a$}\put(137,-5){$a^{-1}$}\put(247,-5){$b$}\put(245,115){$b^{-1}$}

\put(137,90){$A$}\put(137,25){$A$}
\put(160,-3){$B$}\put(225,-3){$B$}\put(255,25){$A$}
\put(255,90){$A$}\put(160,115){$B$} \put(225,115){$B$}
\end{picture}
\end{center}
We identify in a natural manner the ordered set $\{a,a^{-1}, b,
b^{-1} \}$ with the ordered set $[4]=\{1,2,3,4\}$. Then the
following result holds.
\begin{prop}\label{basilicaruote}
Let $\Gamma_n$ be the Schreier graph of the action of the Basilica group on $\{0,1\}^n$.
Then the parity block decomposition of $\Gamma_n$ consists of only
one parity block, so that the graph $\Gamma_n\zig C_4$ is a connected
graph, isomorphic to the double cycle graph $DC_{2^{n+1}}$.
\end{prop}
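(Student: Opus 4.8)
The plan is to reduce the statement to a single transitivity assertion about the Basilica action, and then to settle that assertion by a self-similarity induction. By Theorem \ref{decomposition} the connected components of $\Gamma_n\zig C_4$ are in bijection with the parity blocks of $\Gamma_n$, and by Proposition \ref{propprop} the component attached to a block $P$ is isomorphic to $DC_m$, where $m$ is the length of a spanning path of $P$, that is, the number of edges of $P$. It therefore suffices to show that $\Gamma_n$ consists of a \emph{single} parity block. Granting this, the unique block is all of $\Gamma_n$, its spanning path traverses all $\frac{1}{2}\cdot 4\cdot 2^n=2^{n+1}$ edges, and Proposition \ref{propprop} gives $\Gamma_n\zig C_4\simeq DC_{2^{n+1}}$; connectedness is then immediate from Theorem \ref{decomposition}. (The vertex counts agree: $DC_{2^{n+1}}$ has $2^{n+2}$ vertices, and so does $\Gamma_n\zig C_4$.)

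To analyse the parity blocks I would use the chosen labelling, under which the odd labels are exactly the positive generators $\{a,b\}$ and the even labels the inverse generators $\{a^{-1},b^{-1}\}$; since $\rot_{\Gamma_n}(v,s)=(s(v),s^{-1})$, every edge reverses label-parity, so a parity-respecting trail is precisely a walk whose consecutive steps alternate a positive and an inverse generator. Orienting each edge by its positive generator (so $v\to a(v)$ and $v\to b(v)$), the parity grouping at a vertex links the two outgoing edges to each other and the two incoming edges to each other. Hence the graph on the edges of $\Gamma_n$ whose components are the parity blocks (two edges adjacent when they meet at a vertex with equal-parity labels) is the line graph of the bipartite graph $\mathcal{I}$ with both parts indexed by $X^n$ and incidences $\widehat T_v\sim\widehat H_{a(v)}$, $\widehat T_v\sim\widehat H_{b(v)}$. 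A two-step walk $\widehat T_v\to\widehat H_{g(v)}\to\widehat T_u$ forces $u=(g')^{-1}g(v)$ with $g,g'\in\{a,b\}$, so the tails reachable from $\widehat T_v$ are exactly the orbit of $v$ under the cyclic group $\langle a^{-1}b\rangle$. Consequently $\Gamma_n$ has a single parity block if and only if $\sigma:=a^{-1}b$ acts transitively on $X^n$.

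The transitivity of $\sigma$ is the crux, and I would prove it by induction on $n$ using self-similarity. From the generator action one finds $\sigma(0w)=1\,a(w)$ and $\sigma(1w)=0\,b^{-1}(w)$, so $\sigma$ swaps the first letter and has sections $a$ and $b^{-1}$. For any permutation $g$ of $X^n$ with $g(0w)=1\,\rho_0(w)$ and $g(1w)=0\,\rho_1(w)$ one computes $g^2(0w)=0\,(\rho_1\rho_0)(w)$, so $g$ is transitive on $X^n$ if and only if $\rho_1\rho_0$ is transitive on $X^{n-1}$. Here $\rho_1\rho_0=b^{-1}a=\sigma^{-1}$, which is transitive on $X^{n-1}$ exactly when $\sigma$ is. With the base case $\sigma|_{X^1}=(0\,1)$ this closes the induction; equivalently, $\sigma$ is conjugate to the binary adding machine and hence a single $2^n$-cycle.

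The main obstacle is the reformulation in the second step: one must check carefully that parity-respecting trails are exactly the alternating positive/inverse generator walks, and that block-connectivity is governed precisely by the $\langle a^{-1}b\rangle$-orbits on $X^n$ (including the harmless effect of the loops, which simply identify the odd and even groups at the corresponding vertices). Once that correspondence is secured, the transitivity of $\sigma$ is the short self-similar induction above, and the remaining work — counting the $2^{n+1}$ edges and invoking Proposition \ref{propprop} — is routine.
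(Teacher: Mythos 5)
Your proposal is correct and takes essentially the same route as the paper: both arguments reduce the single-parity-block claim to the fact that the alternating product of the generators ($ab^{-1}$ in the paper, its conjugate-inverse $a^{-1}b$ in your version) acts as a single $2^n$-cycle on $\{0,1\}^n$, producing a parity-respecting spanning path of length $2^{n+1}$ to which Proposition \ref{propprop} is then applied. The differences are only in the level of detail: you upgrade the paper's one-way construction to an if-and-only-if criterion (parity blocks correspond exactly to $\langle a^{-1}b\rangle$-orbits, via your bipartite incidence graph $\mathcal{I}$, which also handles the loops cleanly), and you actually carry out the self-similarity induction ($\sigma(0w)=1\,a(w)$, $\sigma(1w)=0\,b^{-1}(w)$, so the first-level section of $\sigma^2$ is $b^{-1}a=\sigma^{-1}$) that the paper merely asserts \lq\lq is not difficult to prove, by induction on $n$\rq\rq.
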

\begin{proof}
Take an arbitrary vertex $v\in \Gamma_n$: by the construction of $\Gamma_n$, we have
$$
\rot_{\Gamma_n}(v,b^{-1}) = (b^{-1}(v), b) \qquad
\text{ and } \qquad
\rot_{\Gamma_n}(b^{-1}(v),a) = (ab^{-1}(v), a^{-1}).
$$
Observe that the vertices $a$ and $b$ in $C_4$ have been
identified with the integers $1$ and $3$, respectively: this ensures that if we move in
$\Gamma_n$ by using the generators $b^{-1}$ and $a$
alternately, we do not leave our parity block, since the parity properties are satisfied.
By iterating this argument, we describe a path in $\Gamma_n$,
consisting of the vertices that one obtains starting from $v$ and applying alternately
$b^{-1}$, $a$, $b^{-1}$, $a$ and so on. On
the other hand, it is not difficult to prove, by induction on $n$,
that the action of the product $ab^{-1}$ on $X^n$ has order $2^n$,
so that the construction produces a spanning path of $\Gamma_n$, of length $2^{n+1}$,
where each vertex of $\Gamma_n$ occurs twice. Therefore, we
get a unique parity block coinciding with $\Gamma_n$; by
Proposition \ref{propprop}, we conclude that the zig-zag product
$\Gamma_n \zig C_4$ consists of only one connected component
isomorphic to $DC_{2^{n+1}}$.
\end{proof}

\begin{cor}
Let $\Gamma$ be the Schreier graph of the action of a group $G$, generated by the symmetric set
$S=\{a^{\pm 1}, b^{\pm 1}\}$, on a set $X$ with $|X|=n$, so that $\rot_{\Gamma}(x,s) = (s(x),s^{-1})$,
with $s\in S$. Let $C_4$ be the cycle graph whose vertices are labelled as above. Then the graph
$\Gamma \zig C_4$ is connected if and only if the action of $ab^{-1}$ is transitive on $X$. If this is the case, one has $\Gamma\zig C_4 \simeq DC_{2n}$.
\end{cor}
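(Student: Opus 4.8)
The plan is to deduce everything from the parity-block machinery, exactly as in Proposition \ref{basilicaruote}, of which this is the natural generalization. First I would invoke Theorem \ref{decomposition}: since $C_4$ has length $4$, the graph $\Gamma\zig C_4$ is connected if and only if the parity block decomposition of $\Gamma$ consists of a single parity block, so the whole statement reduces to counting the parity blocks of $\Gamma$. Under the given identification of $\{a,a^{-1},b,b^{-1}\}$ with $[4]=\{1,2,3,4\}$, the labels $a,b$ are odd and $a^{-1},b^{-1}$ are even; and because $\rot_\Gamma(x,s)=(s(x),s^{-1})$ with $s$ and $s^{-1}$ always of opposite parity, every edge of $\Gamma$ carries one odd label and one even label.

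Next I would establish the key structural fact, which is the only place real work is needed. Along any parity path the condition $i_{k+1}\equiv j_k \pmod 2$, together with the opposite-parity property of each edge, forces the leaving-labels to alternate odd, even, odd, even; hence, following the path forward (never backtracking), it alternately applies $b^{-1}$ and $a$, precisely as in the Basilica computation. Consequently the even-indexed vertices of such a trail are $v_{2k}=(ab^{-1})^{k}(v)$, so they run through the orbit of $v$ under $\langle ab^{-1}\rangle$; and by Corollary \ref{corollario4} this trail is a spanning (Eulerian) circuit of the parity block containing $v$. Therefore the block through $v$ closes exactly when $(ab^{-1})^{k}(v)$ first returns to $v$, that is, after $2m$ edges, where $m=|\langle ab^{-1}\rangle\, v|$ is the size of that orbit. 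This is the main obstacle: one must check carefully that the alternation is genuinely forced by the parity conditions and that the resulting circuit traverses each edge of the block exactly once, so that the block really has $2m$ edges. This generalizes the order computation of $ab^{-1}$ carried out for $\Gamma_n$ in Proposition \ref{basilicaruote}.

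Granting this, both directions follow by edge-counting. The graph $\Gamma$ is $4$-regular on $n$ vertices, hence has $2n$ edges, and the parity blocks partition this edge set. If $ab^{-1}$ acts transitively then $m=n$, so the block through $v$ already uses all $2n$ edges and is therefore the unique block; since $\Gamma$ is $4$-regular, every vertex then has all four of its edges in this block (is odden) and is visited twice, so the spanning path has length $2n$ and Proposition \ref{propprop} gives $\Gamma\zig C_4\simeq DC_{2n}$. Conversely, if $\Gamma\zig C_4$ is connected there is a single block, which must contain all $2n$ edges; its spanning circuit thus has length $2n=2m$, forcing $m=n$, i.e.\ the $\langle ab^{-1}\rangle$-orbit of $v$ is all of $X$ and $ab^{-1}$ is transitive. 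This proves the equivalence and, in the connected case, identifies the graph with $DC_{2n}$.
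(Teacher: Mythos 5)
Your proposal is correct, and it is essentially the argument the paper intends: the corollary is stated without its own proof, as the direct generalization of Proposition \ref{basilicaruote}, whose proof you reproduce faithfully --- the parity conditions together with non-backtracking force the alternation $b^{-1},a,b^{-1},a,\ldots$ (since $s$ and $s^{-1}$ always carry opposite parities under the given identification with $[4]$, and re-using the arriving label would retrace the same edge), so the spanning circuit of the block through $v$ has length $2m$ with $m=|\langle ab^{-1}\rangle v|$, and Theorem \ref{decomposition} plus Proposition \ref{propprop} then yield the claim. The one thing you add beyond what the paper makes explicit is the converse direction via edge-counting ($2n$ edges partitioned among blocks of sizes $2m_i$, so a single block forces $m=n$), which is exactly what is needed for the \lq\lq if and only if\rq\rq{} and is carried out correctly.
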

\begin{example}\rm
The zig-zag product $\Gamma_1\zig C_4$ gives rise to the following
graph:
\begin{center}
\begin{picture}(270,190)
\unitlength=0.33mm
\letvertex AA=(140,160)\letvertex BB=(140,40)\letvertex CC=(260,40)
\letvertex DD=(260,160)\letvertex a=(170,130)\letvertex b=(170,70)
\letvertex c=(230,70)\letvertex d=(230,130)

\drawundirectededge(AA,BB){}\drawundirectededge(BB,CC){}
\drawundirectededge(CC,DD){}\drawundirectededge(DD,AA){}

\drawundirectededge(a,b){}\drawundirectededge(b,c){}
\drawundirectededge(c,d){}\drawundirectededge(d,a){}

\drawundirectededge(AA,b){}\drawundirectededge(b,CC){}
\drawundirectededge(CC,d){}\drawundirectededge(d,AA){}

\drawundirectededge(a,BB){}\drawundirectededge(BB,c){}
\drawundirectededge(c,DD){}\drawundirectededge(DD,a){}

\drawvertex(AA){$\bullet$}\drawvertex(BB){$\bullet$}
\drawvertex(CC){$\bullet$}\drawvertex(DD){$\bullet$}\drawvertex(a){$\bullet$}
\drawvertex(b){$\bullet$}\drawvertex(c){$\bullet$}\drawvertex(d){$\bullet$}

\put(10,100){$\Gamma_1\zig C_4\simeq DC_4$}

{\scriptsize \put(130,168){$(0,b)$}\put(125,25){$(0,b^{-1})$}
\put(250,25){$(1,b)$} \put(250,168){$(1,b^{-1})$}

\put(172,118){$(0,a)$}\put(172,75){$(0,a^{-1}\!)$}
\put(208,75){$(1,a\!)$}\put(198,118){$(1,a^{-1}\!)$}}
\end{picture}
\end{center}
For $n=2,3$, we get the double cycle graphs $DC_8$ and $DC_{16}$,
respectively (labels are omitted in the pictures below).
\begin{center}
\begin{picture}(340,170)\unitlength=0,12mm
{\footnotesize \put(110,200){$\Gamma_2\zig C_4\simeq DC_8$}
\put(622,200){$\Gamma_3\zig C_4\simeq DC_{16}$}}

\letvertex AA=(200,390)\letvertex BB=(73,337)\letvertex CC=(20,210)
\letvertex DD=(73,83)\letvertex EE=(200,30)\letvertex FF=(327,83)\letvertex GG=(380,210)
\letvertex HH=(327,337)

\letvertex aa=(200,340)\letvertex bb=(114,296)\letvertex cc=(70,210)\letvertex dd=(114,124)
\letvertex ee=(200,80)\letvertex ff=(286,124)\letvertex gg=(330,210)\letvertex hh=(286,296)

\drawvertex(AA){$\bullet$}\drawvertex(BB){$\bullet$}
\drawvertex(CC){$\bullet$}\drawvertex(DD){$\bullet$}
\drawvertex(EE){$\bullet$}\drawvertex(FF){$\bullet$}
\drawvertex(GG){$\bullet$}\drawvertex(HH){$\bullet$}

\drawvertex(aa){$\bullet$}\drawvertex(bb){$\bullet$}
\drawvertex(cc){$\bullet$}\drawvertex(dd){$\bullet$}
\drawvertex(ee){$\bullet$}\drawvertex(ff){$\bullet$}
\drawvertex(gg){$\bullet$}\drawvertex(hh){$\bullet$}

\drawundirectededge(AA,BB){}\drawundirectededge(BB,CC){}\drawundirectededge(CC,DD){}\drawundirectededge(DD,EE){}
\drawundirectededge(EE,FF){}\drawundirectededge(FF,GG){}\drawundirectededge(GG,HH){}\drawundirectededge(HH,AA){}

\drawundirectededge(aa,bb){}\drawundirectededge(bb,cc){}\drawundirectededge(cc,dd){}\drawundirectededge(dd,ee){}
\drawundirectededge(ee,ff){}\drawundirectededge(ff,gg){}\drawundirectededge(gg,hh){}\drawundirectededge(hh,aa){}

\drawundirectededge(AA,bb){}\drawundirectededge(bb,CC){}\drawundirectededge(CC,dd){}\drawundirectededge(dd,EE){}
\drawundirectededge(EE,ff){}\drawundirectededge(ff,GG){}\drawundirectededge(GG,hh){}\drawundirectededge(hh,AA){}

\drawundirectededge(aa,BB){}\drawundirectededge(BB,cc){}\drawundirectededge(cc,DD){}\drawundirectededge(DD,ee){}
\drawundirectededge(ee,FF){}\drawundirectededge(FF,gg){}\drawundirectededge(gg,HH){}\drawundirectededge(HH,aa){}

\letvertex A=(720,410)\letvertex B=(796,394)\letvertex C=(860,350)
\letvertex D=(904,286)\letvertex E=(920,210)\letvertex F=(904,134)\letvertex G=(860,70)
\letvertex H=(796,26)\letvertex I=(720,10)\letvertex L=(644,26)\letvertex M=(580,70)
\letvertex N=(536,134)\letvertex O=(520,210)\letvertex P=(536,286)\letvertex Q=(580,350)
\letvertex R=(644,394)

\letvertex a=(720,360)\letvertex b=(777,348)\letvertex c=(825,315)
\letvertex d=(858,267)\letvertex e=(870,210)\letvertex f=(858,153)\letvertex g=(825,105)
\letvertex h=(777,72)\letvertex i=(720,60)\letvertex l=(663,72)\letvertex m=(615,105)
\letvertex n=(582,153)\letvertex o=(570,210)\letvertex p=(582,267)\letvertex q=(615,315)
\letvertex r=(663,348)

\drawvertex(A){$\bullet$}\drawvertex(B){$\bullet$}
\drawvertex(C){$\bullet$}\drawvertex(D){$\bullet$}
\drawvertex(E){$\bullet$}\drawvertex(F){$\bullet$}
\drawvertex(G){$\bullet$}\drawvertex(I){$\bullet$}\drawvertex(M){$\bullet$}\drawvertex(N){$\bullet$}
\drawvertex(H){$\bullet$}\drawvertex(L){$\bullet$}\drawvertex(O){$\bullet$}\drawvertex(P){$\bullet$}
\drawvertex(Q){$\bullet$}\drawvertex(R){$\bullet$}

\drawvertex(a){$\bullet$}\drawvertex(b){$\bullet$}
\drawvertex(c){$\bullet$}\drawvertex(d){$\bullet$}
\drawvertex(e){$\bullet$}\drawvertex(f){$\bullet$}
\drawvertex(g){$\bullet$}\drawvertex(i){$\bullet$}\drawvertex(m){$\bullet$}\drawvertex(n){$\bullet$}
\drawvertex(h){$\bullet$}\drawvertex(l){$\bullet$}\drawvertex(o){$\bullet$}\drawvertex(p){$\bullet$}
\drawvertex(q){$\bullet$}\drawvertex(r){$\bullet$}

\drawundirectededge(A,B){}\drawundirectededge(B,C){}\drawundirectededge(C,D){}\drawundirectededge(D,E){}
\drawundirectededge(E,F){}\drawundirectededge(F,G){}\drawundirectededge(G,H){}\drawundirectededge(H,I){}
\drawundirectededge(I,L){}\drawundirectededge(L,M){}\drawundirectededge(M,N){}
\drawundirectededge(N,O){}\drawundirectededge(O,P){}\drawundirectededge(P,Q){}\drawundirectededge(Q,R){}
\drawundirectededge(R,A){}

\drawundirectededge(a,b){}\drawundirectededge(b,c){}\drawundirectededge(c,d){}\drawundirectededge(d,e){}
\drawundirectededge(e,f){}\drawundirectededge(f,g){}\drawundirectededge(g,h){}\drawundirectededge(h,i){}
\drawundirectededge(i,l){}\drawundirectededge(l,m){}\drawundirectededge(m,n){}
\drawundirectededge(n,o){}\drawundirectededge(o,p){}\drawundirectededge(p,q){}\drawundirectededge(q,r){}
\drawundirectededge(r,a){}

\drawundirectededge(A,b){}\drawundirectededge(b,C){}\drawundirectededge(C,d){}\drawundirectededge(d,E){}
\drawundirectededge(E,f){}\drawundirectededge(f,G){}
\drawundirectededge(G,h){}\drawundirectededge(h,I){}
\drawundirectededge(I,l){}\drawundirectededge(l,M){}
\drawundirectededge(M,n){}\drawundirectededge(n,O){}\drawundirectededge(O,p){}
\drawundirectededge(p,Q){}\drawundirectededge(Q,r){}\drawundirectededge(r,A){}

\drawundirectededge(a,B){}\drawundirectededge(B,c){}\drawundirectededge(c,D){}\drawundirectededge(D,e){}
\drawundirectededge(e,F){}\drawundirectededge(F,g){}
\drawundirectededge(g,H){}\drawundirectededge(H,i){}
\drawundirectededge(i,L){}\drawundirectededge(L,m){}
\drawundirectededge(m,N){}\drawundirectededge(N,o){}\drawundirectededge(o,P){}
\drawundirectededge(P,q){}\drawundirectededge(q,R){}\drawundirectededge(R,a){}
\end{picture}
\end{center}
\end{example}
\begin{remark}\rm
Observe that we get a connected zig-zag product $\Gamma_n\zig C_4$ for every $n$, even though the neighborhood graph
$\mathcal{N}$ associated with $C_4$ is disconnected, according with the fact that the condition in Theorem \ref{teoremaconnessione}
is only sufficient but not necessary in order to have the connectedness property.
\end{remark}

\begin{thm}\label{basilicaspettro}
For every $n\geq 1$, the spectrum of the graph $\Gamma_n\zig C_4\simeq DC_{2^{n+1}}$
is given by
$$
\Sigma_n = \left\{\underbrace{0,\ldots, 0}_{2^{n+1} \
\text{times}}, 4\cos \left(\frac{\pi j }{2^n}\right),
j=0,1,\ldots, 2^{n+1}-1\right\}.
$$
Let ${\bf u}_0=(1,-1)$, ${\bf u}_1=(1,1)$ and ${\bf v}_j=(1, w^j,
w^{2j}, \ldots, w^{(2^{n+1}-1)j})$. Then for each $j=0,1,\ldots, 2^{n+1}-1$,
${\bf u}_0\otimes {\bf v}_j$ is an eigenvector associated with the
eigenvalue $0$, and $ {\bf u}_1\otimes {\bf v}_j$ is an
eigenvector associated with the eigenvalue $4\cos \left(\frac{\pi
j }{2^n}\right)$.
\end{thm}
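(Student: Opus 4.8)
Throughout write $N=2^{n+1}$, so that $DC_{2^{n+1}}=DC_N$ has $2N=2^{n+2}$ vertices and $w=\exp(2\pi i/N)$. The plan is to choose an ordering of the vertices of $DC_N$ for which the adjacency matrix becomes a single Kronecker product $A=J\otimes C$, where $J$ denotes the $2\times 2$ all-ones matrix and $C$ is the adjacency matrix of the cycle graph $C_N$. Once this is established, the entire statement drops out of the spectral theory of circulant matrices recalled after Definition~\ref{circulant}, together with the standard fact that the eigenvalues and eigenvectors of a Kronecker product are the products of those of the factors.

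The combinatorial heart is the first step. By Proposition~\ref{basilicaruote} it suffices to analyse $DC_N$ itself. From the definition of the double cycle graph (drawn for $N=16$ above), I would label the $2N$ vertices so that they split into an outer cycle $p_0,\dots,p_{N-1}$ and an inner cycle $q_0,\dots,q_{N-1}$ (all indices modulo $N$), with the cycle edges $p_k\sim p_{k+1}$ and $q_k\sim q_{k+1}$, and with the cross-edges produced by the gluing of consecutive papillons (Proposition~\ref{propprop}) joining $p_k$ to $q_{k-1}$ and to $q_{k+1}$. The crucial observation I would then verify is the \emph{twin} property: for each $k$ the two vertices $p_k$ and $q_k$ have exactly the same neighbourhood, namely $\{p_{k-1},p_{k+1},q_{k-1},q_{k+1}\}$. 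Indeed $p_k$ is joined to $p_{k\pm1}$ along the outer cycle and to $q_{k\pm1}$ through the two papillons containing it (Lemma~\ref{papillonlemma}), and symmetrically for $q_k$. Ordering the vertices as $(p_0,\dots,p_{N-1},q_0,\dots,q_{N-1})$, the twin property says precisely that all four $N\times N$ blocks of the adjacency matrix coincide with the circulant matrix $C$ having $c_1=c_{N-1}=1$ and all other entries $0$; that is, the adjacency matrix of $DC_N$ equals $J\otimes C$.

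The spectral computation is then immediate. The all-ones matrix $J$ has eigenvalue $2$ on ${\bf u}_1=(1,1)$ and eigenvalue $0$ on ${\bf u}_0=(1,-1)$, while by the circulant formula recalled after Definition~\ref{circulant} the matrix $C$ has eigenvector ${\bf v}_j=(1,w^j,\dots,w^{(N-1)j})$ with eigenvalue $c_1 w^{j}+c_{N-1}w^{-j}=w^{j}+w^{-j}=2\cos(2\pi j/N)$, for $j=0,\dots,N-1$. Multiplying, ${\bf u}_1\otimes{\bf v}_j$ is an eigenvector of $A$ with eigenvalue $2\cdot 2\cos(2\pi j/N)=4\cos(\pi j/2^{n})$, using $2\pi/N=\pi/2^{n}$, whereas ${\bf u}_0\otimes{\bf v}_j$ is an eigenvector with eigenvalue $0$. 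As $j$ ranges over $0,\dots,N-1$ these $2N$ vectors are mutually orthogonal (the ${\bf u}_i$ are orthogonal and the ${\bf v}_j$ are orthogonal for distinct $j$), hence form a complete eigenbasis. Therefore the eigenvalue $0$ occurs with multiplicity $N=2^{n+1}$ and the remaining eigenvalues are exactly $4\cos(\pi j/2^{n})$, $j=0,\dots,2^{n+1}-1$, which is precisely $\Sigma_n$, with the eigenvectors as stated.

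The main obstacle lies entirely in the first step: one must read off, from the papillon-gluing description of $DC_N$, the exact adjacency pattern of the cross-edges and thereby the twin identity that $p_k$ and $q_k$ share the neighbourhood $\{p_{k\pm1},q_{k\pm1}\}$. This is exactly what forces the uniform block structure $J\otimes C$ (as opposed to, say, $\left(\begin{smallmatrix}C&I\\ I&C\end{smallmatrix}\right)$, which would give the wrong spectrum); granting it, the Kronecker and circulant computation is routine.
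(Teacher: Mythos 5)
Your proposal is correct and follows essentially the same route as the paper: the paper's proof likewise orders the $2^{n+2}$ vertices of $\Gamma_n\zig C_4$ so that the adjacency matrix becomes $U\otimes\widetilde{M}_n$, where $U$ is the $2\times 2$ all-ones matrix (your $J$) and $\widetilde{M}_n$ is the circulant adjacency matrix of the cycle $C_{2^{n+1}}$ (your $C$), and then multiplies the spectra of the two Kronecker factors using the circulant eigenvalue formula. The only cosmetic differences are that the paper produces the ordering explicitly from the alternating action of $b^{-1}$ and $a$ on $0^n$ rather than from your twin-neighbourhood observation on the abstract $DC_{2^{n+1}}$, and that you additionally spell out the completeness of the eigenbasis via orthogonality, which the paper leaves implicit.
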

\begin{proof}
It can be deduced from the structure of the graph $DC_{2^{n+1}}$ that the
adjacency matrix of the graph $\Gamma_n\zig C_4$ is a circulant
matrix of size $2^{n+2}$.
More precisely, we choose the following order of the vertices of
$\Gamma_n\zig C_4$:
$$
(0^n,a); (b^{-1}(0^n),a^{-1}); (ab^{-1}(0^n),a); (b^{-1}ab^{-1}(0^n),a^{-1}); \ldots ; ((ab^{-1})^{2^n-1}(0^n),a);
$$
$$
(b^{-1}(ab^{-1})^{2^n-1}(0^n),a^{-1}); (0^n,b);  (b^{-1}(0^n),b^{-1}); (ab^{-1}(0^n),b);
(b^{-1}ab^{-1}(0^n),b^{-1}); \ldots
$$
$$
\ldots; ((ab^{-1})^{2^n-1}(0^n),b);  (b^{-1}(ab^{-1})^{2^n-1}(0^n),
b^{-1}).
$$
In other words, we are applying alternately  $b^{-1}$ and $a$ to
the word $0^n$ (observe that the $2^n$-iteration of $ab^{-1}$ is
the identity on $\{0,1\}^n$), with an alternation of $a,a^{-1}$ in the second coordinate of the first
$2^n$ vertices (the inner cycle) and of $b,b^{-1}$ in the second coordinate of the second
$2^n$ vertices (the outer cycle). It is straightforward to check that, with this ordering of the
vertices, the adjacency matrix $M_n$ of the graph $\Gamma_n\zig C_4$,
for each $n\geq 1$, is given by $U\otimes \widetilde{M}_n$,
where $U =
\begin{pmatrix}
  1 & 1 \\
  1 & 1
\end{pmatrix}$ and $\widetilde{M}_n$ is the square matrix of size
$2^{n+1}$ of type
$$
\widetilde{M}_n = \begin{pmatrix}
  0 & 1 & 0 &  & \ldots &  & 0 & 1 \\
  1 & 0 & 1 &   &  &  & 0 & 0 \\
  0 &  1 & 0 &  1 &  &  & &  \\
   &  &  1& 0 &  \ddots &  &  & \vdots \\
 \vdots  &  &  &  \ddots &  \ddots &  1 &  &  \\
   &  &  &  &  1&  0 & 1 & 0 \\
 0  & 0 &  &  &  & 1 & 0 & 1 \\
 1  & 0 &  & \ldots & & 0 & 1 & 0
\end{pmatrix}.
$$
The matrix $\widetilde{M}_n$ is a circulant matrix, according with
Definition \ref{circulant}. More precisely, $\widetilde{M}_n$ is a circulant matrix of
size $2^{n+1}$, satisfying
$$
c_k =
\begin{cases}
1,&\text{if }k = 1, 2^{n+1}-1\\
0,&\text{otherwise}.
\end{cases}
$$
We deduce that, for every $j=0,1,\ldots, 2^{n+1}-1$, the vector
$$
\textbf{v}_j=(1, w^j, w^{2j}, \ldots, w^{(2^{n+1}-1)j}),
$$
where $w= \exp\left(\frac{2\pi i }{2^{n+1}}\right)$, and $i^2=-1$,
is an eigenvector of $\widetilde{M}_n$ with associated eigenvalue
$$
\lambda_j = (-1)^j \cdot 2 \cos \left(\pi j - \frac{\pi j
}{2^n}\right).
$$
On the other hand, we have
$$
\cos \left(\pi j - \frac{\pi j }{2^n}\right) = \cos(\pi
j)\cos\left(\frac{\pi j }{2^n}\right) + \sin (\pi j) \sin \left(
\frac{\pi j }{2^n}\right) = (-1)^j \cos\left(\frac{\pi j
}{2^n}\right),
$$
so that the $j$-th eigenvalue of $\widetilde{M}_n$, for
$j=0,1,\ldots, 2^{n+1}-1$, is
$$
\lambda_j = 2\cos \left(\frac{\pi j }{2^n}\right).
$$
Notice that the eigenvalues of the matrix $U$ are $\mu_0=0$, with
eigenvector $\textbf{u}_0 = (1,-1)$, and $\mu_1 = 2$, with
eigenvector $\textbf{u}_1=(1,1)$. As $M_n = U\otimes
\widetilde{M}_n$, the eigenvectors of the matrix $M_n$ are given
by $\textbf{u}_i\otimes \textbf{v}_j$, with associated eigenvalue
$\mu_i\lambda_j$, for $i=0,1$ and $j=0,1,\ldots, 2^{n+1}-1$. This
gives the assertion.
\end{proof}
Observe that in \cite{annals} the authors defined the zig-zag product
$G_1\zig G_2$ of two finite connected regular graphs $G_1=(V_1,E_1)$ and $G_2=(V_2,E_2)$.
The definition can be naturally extended to the case where $G_1$ is an infinite regular graph,
and the degree of $G_1$ is equal to $|V_2|$. The analysis of zig-zag products of infinite graphs
will be considered in the upcoming paper \cite{DADSH}. In the following example, we will consider
the infinite case, where $G_1$ is the infinite Schreier graph of a word in $\{0,1\}^\infty$ under
the action of the Basilica group, and $G_2$ is the cycle graph of length $4$.

\begin{example}\rm
It is not difficult to see that the zig-zag product $\Gamma_{\xi}\zig C_4$, where $\Gamma_{\xi}$ is the
infinite $4$-regular graph describing the orbit of the vertex $\xi$, with $\xi\in
\{0,1\}^\infty$, and $\xi\not\in B(0^\infty)$, is an infinite connected $4$-regular graph
isomorphic to the following graph:
\begin{center}
\begin{picture}(430,60)\unitlength=0,28mm \thicklines
\letvertex A=(40,60)\letvertex B=(90,60)\letvertex C=(140,60)
\letvertex D=(190,60)\letvertex E=(240,60)\letvertex F=(290,60)\letvertex G=(340,60)
\letvertex H=(390,60)

\letvertex a=(40,10)\letvertex b=(90,10)\letvertex c=(140,10)
\letvertex d=(190,10)\letvertex e=(240,10)\letvertex f=(290,10)\letvertex g=(340,10)
\letvertex h=(390,10)

\drawvertex(A){$\bullet$}\drawvertex(B){$\bullet$}
\drawvertex(C){$\bullet$}\drawvertex(D){$\bullet$}
\drawvertex(E){$\bullet$}\drawvertex(F){$\bullet$}
\drawvertex(G){$\bullet$}\drawvertex(H){$\bullet$}

\drawvertex(a){$\bullet$}\drawvertex(b){$\bullet$}
\drawvertex(c){$\bullet$}\drawvertex(d){$\bullet$}
\put(460,30){$DC_\infty$}
\drawvertex(e){$\bullet$}\drawvertex(f){$\bullet$}
\drawvertex(g){$\bullet$}\drawvertex(h){$\bullet$}

\drawundirectededge(A,B){}\drawundirectededge(B,C){}\drawundirectededge(C,D){}\drawundirectededge(D,E){}
\drawundirectededge(E,F){}\drawundirectededge(F,G){}\drawundirectededge(G,H){}

\drawundirectededge(a,b){}\drawundirectededge(b,c){}\drawundirectededge(c,d){}\drawundirectededge(d,e){}
\drawundirectededge(e,f){}\drawundirectededge(f,g){}\drawundirectededge(g,h){}

\drawundirectededge(A,b){}\drawundirectededge(b,C){}\drawundirectededge(C,d){}\drawundirectededge(d,E){}
\drawundirectededge(E,f){}\drawundirectededge(f,G){}
\drawundirectededge(G,h){}

\drawundirectededge(a,B){}\drawundirectededge(B,c){}\drawundirectededge(c,D){}\drawundirectededge(D,e){}
\drawundirectededge(e,F){}\drawundirectededge(F,g){}
\drawundirectededge(g,H){}

\dashline[5]{6}(390,60)(430,60)\dashline[5]{6}(390,10)(430,10)
\dashline[5]{6}(40,60)(0,60)\dashline[5]{6}(40,10)(0,10)

\dashline[5]{6}(390,60)(430,20)\dashline[5]{6}(390,10)(430,50)
\dashline[5]{6}(40,60)(0,20)\dashline[5]{6}(40,10)(0,50)
\end{picture}
\end{center}
Incidentally, this also shows that the graphs $\Gamma_\xi
\zig C_4$ and $\Gamma_\eta \zig C_4$ may be isomorphic, even if
the graphs $\Gamma_\xi$ and $\Gamma_\eta$ are not isomorphic. More
precisely, we have that the uncountably many graphs $\Gamma_\xi
\zig C_4$ are all isomorphic, for every $\xi \not \in
B(0^\infty)$. This property implies that the zig-zag construction is not injective even in the infinite context.

On the other hand, it is easy to check that the zig-zag product of
the graph $\Gamma_{0^\infty}$ with the cycle graph of length $4$
consists of two infinite connected components, each isomorphic to
the graph $DC_{\infty}$.

We can summarize these results as follows. Choose a
root in the graph $C_4$, and let us denote it by $v_0$. As usual,
denote by $(\Gamma_n,v)$ the graph $\Gamma_n$ rooted at the vertex
$v$. Let $\xi=x_1x_2x_3\ldots\in \{0,1\}^\infty$, and let $\xi_n =
x_1x_2\ldots x_n$, then:
\begin{enumerate}
\item if $\xi \not \in B(0^\infty)$, then
$$
\lim_{n\to\infty} (\Gamma_{\xi_n}, \xi_n)\zig (C_4,v_0) =
(DC_{\infty}, (\xi,v_0));
$$
\item if $\xi \in B(0^\infty)$, then
$$
\lim_{n\to\infty} (\Gamma_{\xi_n}, \xi_n)\zig (C_4,v_0) =
(DC_{\infty}, (\xi,v_0)) \cup (DC_{\infty}, (\xi,v_0+1)).
$$
\end{enumerate}

\end{example}


\section*{Acknowledgments}
The authors want to thank Wolfgang Woess and Tullio Ceccherini-Silberstein for useful and stimulating discussions on the subjects of this
paper. Daniele D'Angeli was supported by Austrian Science Fund (FWF) P24028-N18.  Alfredo Donno was partially supported by the European Science
Foundation (Research Project RGLIS 4915). Ecaterina Sava-Huss was supported by Austrian Science Fund (FWF): W1230.



\begin{thebibliography}{99}
\bibitem{abdollahi} A. Abdollahi and A. Loghman, On one-factorizations of replacement
products, to appear in {\it Filomat}, Published by Faculty of
Sciences and Mathematics, University of Ni\v{s}, Serbia, available
at \texttt{http://www.pmf.ni.ac.rs/filomat}

\bibitem{groups} N. Alon, A. Lubotzky and A. Wigderson,
Semi-direct product in groups and zig-zag product in graphs:
connections and applications (extended abstract), in:
\textit{\lq\lq $42$-nd IEEE Symposium on Foundations of Computer
Science, Las Vegas, NV, 2001\rq\rq}, 630--637. IEEE Computer
Society, Los Alamitos, CA, 2001.

\bibitem{amen} L. Bartholdi and B. Vir\'{a}g, Amenability via random
walks, {\it Duke Math Journal} {\bf 130} (2005), no. 1, 39--56.

\bibitem{tutte1} T. Ceccherini-Silberstein, A. Donno and D. Iacono, Tutte polynomial of the Schreier graphs of the Grigorchuk
group and the Basilica group, in: \textit{Ischia Group Theory 2010
(Proceedings of the Conference)} (M. Bianchi, P. Longobardi, M.
Maj and C. M. Scoppola editors), World Scientific 2011, 45--68.

\bibitem{libro} T. Ceccherini-Silberstein, F. Scarabotti and F.
Tolli, Harmonic Analysis on Finite Groups: Representation theory,
Gelfand pairs and Markov chains. {\it Cambridge Studies in
Advanced Mathematics}, {\bf 108}, Cambridge University Press,
2008. xiv + 440 pp.

\bibitem{ADM} D. D'Angeli and A. Donno, Self-similar groups and finite Gelfand pairs,
{\it Algebra Discrete Math.}, no. 2, (2007), 54--69.

\bibitem{JMD2} D. D'Angeli, A. Donno, M. Matter and T. Nagnibeda, Schreier graphs of the
Basilica group, {\it J. Mod. Dyn.} {\bf 4} (2010), no. 1,
167--205.

\bibitem{DIMERI} D. D'Angeli, A. Donno and T. Nagnibeda, Counting dimer coverings on self-similar Schreier
graphs, {\it European J. Combin.} {\bf 33} (2012), no. 7,
1484--1513.

\bibitem{ISING} D. D'Angeli, A. Donno and T. Nagnibeda, Partition functions of the Ising model on some self-similar Schreier
graphs, in: {\it Progress in Probability: Random Walks, Boundaries
and Spectra} (D. Lenz, F. Sobieczky and W. Woess editors), {\bf
64} (2011), 277--304, Springer Basel.

\bibitem{DADSH} D. D'Angeli, A. Donno and E. Sava-Huss, Zig-zag products of infinite graphs. In preparation.

\bibitem{valettebook} G. Davidoff, P. Sarnak and A. Valette, Elementary number theory, group theory, and Ramanujan graphs.
{\it London Mathematical Society Student Texts}, {\bf 55}.
Cambridge University Press, Cambridge, 2003. x + 144 pp.

\bibitem{davis} P. J. Davis, Circulant matrices. A Wiley-Interscience
Publication. {\it Pure and Applied Mathematics}. John Wiley \&
Sons, New York-Chichester-Brisbane, 1979. xv + 250 pp.

\bibitem{alfredoIJGT} A. Donno, Replacement and zig-zag products,
Cayley graphs and Lamplighter random walk, {\it Int. J. Group
Theory} {\bf 2} (2013) No. 1, 11--35.

\bibitem{donnozigzag2} A. Donno, Generalized wreath products of graphs and
groups, to appear in {\it Graphs and
Combinatorics}, published online at \texttt{http://link.springer.com/article/10.1007/s00373-014-1414-4}, DOI 10.1007/s00373-014-1414-4

\bibitem{tutte2} A. Donno and D. Iacono, The Tutte polynomial of the Sierpi\'{n}ski and Hanoi graphs, {\it Adv. Geom.}, Vol. 13
(2013), Issue 4, 663--694.

\bibitem{grigorchuksolved} R. I. Grigorchuk,
Solved and unsolved problems around one group, Infinite groups:
geometric, combinatorial and dynamical aspects, 117--218, {\it
Progr. Math.}, {\bf 248}, Birkh\"{a}user, Basel, 2005.

\bibitem{grizuk} R. I. Grigorchuk and A. \.{Z}uk, On a torsion-free
weakly branch group defined by a three-state automaton. {\it
Internat. J. Algebra Comput.}, {\bf 12}, (2002), no. 1, 223--246.

\bibitem{gromov} M. Gromov, Filling Riemannian manifolds, {\it J. Differential
Geom.} {\bf 18} (1983), no. 1, 1--147.

\bibitem{Grom} M. Gromov, Structures m\'{e}triques pour les vari\'{e}t\'{e}s riemanniennes,
\textit{Textes Math\'{e}matiques}, J. Lafontaine and P. Pansu (Eds.),
1. CEDIC, Paris, 1981. iv+152 pp. ISBN: 2-7124-0714-8.

\bibitem{expander} S. Hoory, N. Linial and A. Wigderson, Expander
graphs and their application, {\it Bull. Amer. Math. Soc. (N.S.)}
{\bf 43} (2006), no. 4, 439--561.

\bibitem{communications} C. A. Kelley, D. Sridhara and J. Rosenthal, Zig-zag and replacement product graphs and LDPC codes, {\it Adv. Math.
Commun.} {\bf 2} (2008), no. 4, 347--372.

\bibitem{lubotullio} A. Lubotzky, Expander graphs in pure and
applied mathematics, {\it Bull. Amer. Math. Soc.} {\bf 49} (2012),
no. 1, 113--162.

\bibitem{nekrashevych} V. Nekrashevych, {\it Self-similar Groups}, Volume 117 of Mathematical Surveys and Monographs,
American Mathematical Society, Providence, RI, 2005. xii + 231 pp.

\bibitem{nekrateplyaev} V. Nekrashevych and A. Teplyaev, Groups and analysis on fractals. In:
\lq\lq{Analysis on Graphs and its Applications}\rq\rq, Proceedings
of Symposia in Pure Mathematics, Amer. Math. Soc., {\bf 77},
143--180. Amer. Math. Soc., Providence (2008).

\bibitem{annals} O. Reingold, S. Vadhan and A. Wigderson, Entropy
Waves, the Zig-Zag Graph Product, and New Constant-Degree
Expanders, {\it Ann. of Math. (2)} {\bf 155} (2002), no. 1,
157--187.

\bibitem{tee} G. J. Tee, Eigenvectors of block circulant and
alternating circulant matrices, {\it Res. Lett. Inf. Math. Sci.}
{\bf 8} (2005), 123--142.

\bibitem{danielecitazione} I. Tomescu, Problems in Combinatorics and Graph Theory. Translated from the Romanian by R. A. Melter. Wiley Interscience Series in Discrete Mathematics. {\it John Wiley \& Sons, Ltd. Chichester}, 1985. xvii + 335 pp.
\end{thebibliography}
\end{document}